\newcommand{\eqdef}{\ensuremath{\stackrel{\mathrm{def}}{=}}}
\def\eqsp{\;}
\newcommand{\PE}{\mathbb{E}}
\newcommand{\PP}{\mathbb{P}}
\newcommand{\T}{\theta}
\newcommand{\X}{\mathbf{X}}
\newcommand{\anneau}{H}
\newcommand{\bornea}{\xi}
\newcommand{\tetaset}{\Theta}
\newcommand{\refmes}{\mu}
\newcommand{\mesobs}{\pi_u}
\newcommand{\vitborn}{\Gamma}
\newcommand{\vit}{\gamma}
\def\rmd{\mathrm{d}}
\def\Xsigma{\mathcal{X}}
\def\param{v}
\def\Rset{\mathbb{R}}
\def\Nset{\mathbb{N}}
\def\prior{\chi}
\def\un{\mathbf{1}}
\def\InterractionProba{\varepsilon}
\newcounter{hypA}
\newenvironment{hypA}{\refstepcounter{hypA}\begin{itemize}
  \item[{\bf R\arabic{hypA}}]}{\end{itemize}}
\newcounter{hypEE}
\newenvironment{hypEE}{\refstepcounter{hypEE}\begin{itemize}
  \item[{\bf E\arabic{hypEE}}]}{\end{itemize}}
\newtheorem{theorem}{Theorem}[section]
\newtheorem{proposition}[theorem]{Proposition}
\newtheorem{lemma}[theorem]{Lemma}
\title{Adaptive Equi-Energy Sampler: Convergence and Illustration}
\author{Amandine SCHRECK, Gersende FORT and Eric MOULINES}
\begin{document}
 
\maketitle

\section*{Abstract}

  Markov chain Monte Carlo (MCMC) methods allow to sample a distribution known
  up to a multiplicative constant. Classical MCMC samplers are known to have
  very poor mixing properties when sampling multimodal distributions. The
  Equi-Energy sampler is an interacting MCMC sampler proposed by Kou, Zhou and
  Wong in 2006 to sample difficult multimodal distributions. This algorithm
  runs several chains at different temperatures in parallel, and allow
  lower-tempered chains to jump to a state from a higher-tempered chain having
  an energy `close' to that of the current state.  A major drawback of this
  algorithm is that it depends on many design parameters and thus, requires a
  significant effort to tune these parameters.
  
  In this paper, we introduce an Adaptive Equi-Energy (AEE) sampler which
  automates the choice of the selection mecanism when jumping onto a state of
  the higher-temperature chain. We prove the ergodicity and a strong law of large
  numbers for AEE, and for the original Equi-Energy sampler as well.  Finally,
  we apply our algorithm to motif sampling in DNA sequences.

\vspace{4cm}

Keywords: interacting Markov chain Monte Carlo, adaptive sampler, equi-energy sampler,  ergodicity, law of large numbers, motif sampling.

\vspace{2cm}

Author's email addresses: first-name.last-name@telecom-paristech.fr

\medskip

This work is partially supported by the French National Research Agency, under the program ANR-08-BLAN-0218 BigMC.

\clearpage

%%%%%%%%%%%%%%%%%%%%%%%%%%%%%%%%%%%%%%%%%%%%%%%%%%%%%%%%%%%%%

\section{Introduction}
Markov Chain Monte Carlo (MCMC) methods are well-known tools for sampling  a target distribution $\pi$ known up to a multiplicative constant.
MCMC algorithms sample $\pi$ by constructing a Markov chain admitting $\pi$ as unique
invariant distribution.
A canonical example is the the Metropolis-Hastings algorithm \cite{metropolis:1953,hastings:1970}: given the current value $X_n$ of the chain $\{X_j,\, j \geq 0 \}$,
 it consists in proposing a move $Y_{n+1}$ under a proposal
distribution $Q(X_n,\cdot)$. This move is then accepted with probability
\begin{align*}
 \alpha_n=1 \wedge \pi(Y_{n+1}) Q(Y_{n+1},X_n) / [\pi(X_n)
  Q(X_n,Y_{n+1})] \eqsp,
\end{align*}
where $a\wedge b$ stands for $\min(a,b)$; otherwise, $X_{n+1} = X_n$.

It is known that the
efficiency of MCMC methods depends upon the choice of the proposal distribution
\cite{rosenthal:2009}. For example, when sampling multi-modal
distributions, a Metropolis-Hastings algorithm with $Q(X_n,\cdot)$ equal to a
Gaussian distribution centered in $X_n$ tends to be stuck in one of the modes.
So the convergence of such an algorithm will be  slow, and the target distribution
will not be correctly approximated unless a huge number of points is sampled.

Efficient implementations of MCMC rely on a strong expertise of the user in
order to choose a proposal kernel and, more generally, design parameters
adapted to the target $\pi$.

This is the reason why {\em adaptive} and {\em interacting} MCMC methods have been introduced. Adaptive MCMC methods consist in choosing, at each iteration, a transition kernel $P_\theta$ among a family $\{P_\theta, \theta \in \Theta \}$ of kernels with invariant distribution $\pi$: the conditional distribution of $X_{n+1}$ given the past is $P_{\theta_n}(X_n, \cdot)$ where the parameter $\theta_n$ is chosen according to the past values of the chain $\{X_n, n\geq 0 \}$. From the pioneering Adaptive Metropolis algorithm of \cite{haario:saksman:tamminen:1999}, many adaptive MCMC have been proposed and successfully applied (see the survey papers by~\cite{andrieu:thoms:2008}, \cite{rosenthal:2009}, \cite{atchade:fort:moulines:priouret:2011} for example).

Interacting MCMC methods rely on the (parallel) construction of a family of
processes with distinct stationary distributions; the key behind these
techniques is to allow interactions when sampling these different processes.
At least one of these processes has $\pi$ as stationary distribution. The
stationary distributions of the auxiliary processes are chosen in such a way
that they have nice convergence properties, hoping that the process under study
will inherit them. For example, in order to sample multi-modal distributions, a
solution is to draw auxiliary processes with target distributions equal - up to
the normalizing constant - to tempered versions $\pi^{1/T_i}$, $T_i>1$.  
This solution is the basis of the parallel {\em tempering
  algorithm}~\cite{geyer:1991}, where the states of two parallel chains are
allowed to swap.  Following this tempering idea, different interacting MCMC
algorithms have been proposed and studied so
far~\cite{andrieu:jasra:doucet:delmoral:2007,bercu:delmoral:doucet:2009,delmoral:doucet:2010,brockwell:delmoral:doucet:2010}.

The {\em Equi-Energy sampler} of Kou, Zhou and Wong
\cite{kou:zhou:wong:2006} is an example of such interacting MCMC
algorithms.  $K$ processes are sampled in parallel, with target distributions
(proportional to) $\pi^{\beta_k}$, $1 = \beta_K > \beta_{K-1}
  >\cdots > \beta_1$. The first chain $Y^{(1)} = \{Y_n^{(1)},
  n\geq 0\}$ is usually a Markov chain; then $Y^{(k)}$ is
built from $Y^{(k-1)}$ as follows: with a fixed probability
$\InterractionProba$, the current state $Y_n^{(k)}$ is allowed
to jump onto a past state of the auxiliary chain
$\{Y^{(k-1)}_\ell, \ell \leq n \}$, and with probability
$(1-\InterractionProba)$, $Y_n^{(k)}$ is obtained using a
``local" MCMC move (such as a random walk Metropolis step or a
Metropolis-adjusted Langevin step). This mechanism includes the computation of
an acceptance ratio so that the chain $Y^{(k)}$ will have
$\pi^{\beta_{k}}$ as target density.  As the acceptance
probability of such a jump could be very low, only jumps toward selected past
values of $Y^{(k-1)}$, namely those with an {\em energy} close
to that of the current state $Y_n^{(k)}$, are allowed. This
selection step allows higher acceptance rates of the jump, and a faster
convergence of the algorithm is expected.

The Equi-Energy sampler has many design parameters: the interacting probability
$\InterractionProba$, the number $K$ of parallel chains, the temperatures
$T_k=1/\beta_k$, $k \in \{1,\dots,K\}$ and the selection function. It is known that all
of these design parameters play a role on the efficiency of the algorithm.
\cite{kou:zhou:wong:2006} suggest some values for all these parameters,
designed for practical implementation and based on empirical results on some
simple models. \cite{andrieu:jasra:doucet:delmoral:2011} discuss the choice of
the interacting probability $\InterractionProba$ in similar contexts;
\cite{atchade:roberts:rosenthal:2011} discuss the choice of the temperatures
$T_k$ of the chains for the Parallel Tempering algorithm.  Recently, an
algorithm combining parallel tempering with equi-energy moves have been
proposed by \cite{bargatti:grimaud:pommeret:2012}.

In this paper, we discuss the choice of the energy rings and the selection
function, when the jump probability $\InterractionProba$, the number $K$ of
auxiliary processes and the temperatures are fixed.  We introduce a new
algorithm, called {\em Adaptive Equi-Energy sampler} in which the selection
function is defined adaptively based on the past history of the sampler.  We
also address the convergence properties of this new sampler.  

Different kinds of convergence of adaptive MCMC methods have been addressed in
the literature: convergence of the marginals, the law of large numbers (LLN)
and central limit theorems (CLT) for additive functionals (see e.g.
\cite{roberts:rosenthal:2007} for convergence of the marginals and weak LLN of
general adaptive MCMC, \cite{andrieu:moulines:2006} or
\cite{saksman:vihola:2010} for LLN and CLT for adaptive Metropolis algorithms,
\cite{fort:moulines:priouret:2010} and
\cite{fort:moulines:priouret:vandekerkhove:2011} for convergence of the
marginals, LLN and CLT for general adaptive MCMC algorithms - see also the
survey paper by \cite{atchade:fort:moulines:priouret:2011}).

There are quite few analysis of the convergence of interacting MCMC samplers.
The original proof of the convergence of the Equi-Energy sampler in
\cite{kou:zhou:wong:2006} (resp. \cite{atchade:liu:2006}) contains a serious
gap, mentioned in \cite{atchade:liu:2006} (resp.
\cite{andrieu:jasra:doucet:delmoral:2008b}).
\cite{andrieu:jasra:doucet:delmoral:2011} established a strong LLN of a
simplified version of the Equi-Energy sampler, in which the number of levels is
set to $K=2$ and the proposal during the interaction step are drawn uniformly
at random in the past of the auxiliary process. Finally, Fort, Moulines and
Priouret \cite{fort:moulines:priouret:2010} established the convergence of
the marginals and a strong LLN for the same simplified version of the
Equi-Energy sampler (with no selection) but have removed the limitations on the
number of parallel chains.

The paper addresses the convergence of an interacting MCMC sampler in which the
proposal are selected from energy rings which are constructed adaptively at
each levels. In this paper, we obtain the convergence of the marginals and a
strong LLN of a smooth version of the Equi-Energy sampler and its adaptive
variant. We illustrate our results in several difficult scenarios such as
sampling mixture models with ``well-separated" modes and motif sampling in
biological sequences. The paper is organized as follows: in
Section~\ref{presentation}, we derive our algorithm and set the notations that
are used throughout the paper. The convergence results are presented in
Section~\ref{main:results}. Finally, Section~\ref{illustration} is devoted to
the application to motif sampling in biological sequences. The proofs of the
results are postponed to the Appendix.

%%%%%%%%%%%%%%%%%%%%%%%%%%%%%%%%%%%%%%%%%%%%%%%%%%%%%%%%%%

\section{Presentation of the algorithm} \label{presentation}

\subsection{Notations}
Let $(\X,\mathcal{X})$ be a measurable Polish state space and $P$ be a Markov
transition kernel on $(\X,\mathcal{X})$. $P$ operates on bounded functions $f$
on $\X$ and on finite positive measures $\mu$ on $\mathcal{X}$:
\begin{displaymath}
 Pf(x) = \int P(x,\rmd y) f(y), \qquad \mu P(A) = \int \mu(dx) P(x,A) \eqsp.
\end{displaymath}
The $n$-iterated transition kernel $P^n$, $n \geq 0$ is  defined by:
\begin{displaymath}
  P^n(x,A)=\int P^{n-1}(x,\rmd y)P(y,A)= \int P(x,\rmd y) P^{n-1}(y,A) \eqsp;
\end{displaymath}
by convention, $P^0(x,A)$ is the identity kernel.  For a function $V :
\mathbf{X} \rightarrow [1,+\infty[$, we denote by $|f|_V$ the V-norm of a
function $f : \X \rightarrow \mathbb{R}$:
\begin{displaymath} |f|_V = \sup_{x
    \in \mathbf{X}} \frac{|f(x)|}{V(x)} \eqsp.
\end{displaymath}
If $V=1$, this norm is the usual uniform norm. Let $\mathcal{L}_V = \{ f :
\mathbf{X} \rightarrow \mathbb{R}, |f|_V < + \infty \}$.  We also define the
V-distance between two probability measures $\mu_1$ and $\mu_2$ by:
\begin{displaymath}
  \|\mu_1 - \mu_2 \|_V = \sup_{f,|f|_V\leq1} |\mu_1(f) - \mu_2(f) | \eqsp.
\end{displaymath}
When $V=1$, the V-distance is the total-variation distance and will be denoted by $\| \mu_1 - \mu_2 \|_{TV}$.

Let $(\Theta,\mathcal{T})$ be a measurable space, and $\{P_{\theta}, \theta \in
\Theta \}$ be a family of Markov transition kernels; $\Theta$ can be finite or
infinite dimensional. It is assumed that for all $A \in \mathcal{X}$,
$(x,\theta) \rightarrow P_{\T} (x,A)$ is $(\mathcal{X} \otimes \mathcal{T} |
\mathcal{B}([0,1]))$-measurable, where $\mathcal{B}([0,1])$ denotes the Borel
$\sigma$-field on $[0,1]$.

\subsection{The Equi-Energy sampler}

 \label{presentation:ees}
 Let $\pi$ be the probability density of the target distribution with respect
 to a dominating measure $\refmes$ on $(\X,\mathcal{X})$. In many applications, $\pi$ is known up to
 a multiplicative constant; therefore, we will denote by $\mesobs$ the
 (unnormalized) density.

 We denote by $P$ the Metropolis-Hastings kernel with proposal density kernel $q$ and invariant distribution $\pi$
 defined by:
\begin{equation*}
  P(x,A)= \int_A r(x,y) q(x,y) \refmes(\rmd y) + \mathbf{1}_A (x) \int
  (1-r(x,y)) q(x,y) \refmes(\rmd y) \eqsp,
\end{equation*}
where $(x,y) \mapsto r(x,y)$ is the acceptance ratio given by
\[
 r(x,y) = 1 \wedge \frac{\pi(y) q(y,x)}{\pi(x) q(x,y)} \eqsp.
\]

The Equi-Energy (EE) sampler proposed by
  \cite{kou:zhou:wong:2006} exploits the fact that it is often easier to
sample a tempered version $\pi^{\beta}$, $0<\beta<1$, of the target
distribution than $\pi$ itself. This is why the algorithm relies on an
auxiliary process $\{Y_n, n \geq 0\}$, run independently from $\{X_n\}$ and
admitting $\pi^{\beta}$ as stationary distribution (up to a normalizing
constant).  This mechanism can be repeated yielding to a multi-stages
Equi-Energy sampler.

We denote by $K$ the number of processes run in parallel. Let
$\InterractionProba \in (0,1)$.  Choose $K$ temperatures $T_1> \dots > T_K=1$
and set $\beta_k=1/T_k$; and $K$ MCMC kernels $\{P^{(k)}, 1\leq k \leq K \}$
such that $\pi^{\beta_k} P^{(k)} = \pi^{\beta_k}$.  $K$ processes $Y^{(k)} =
\{Y_n^{(k)}, n\geq 0\}$, $1 \leq k \leq K$, are defined by induction on the
probability space $(\Omega, \mathcal{F}, \PP)$.  The first auxiliary process
$Y^{(1)}$ is a Markov chain, with $P^{(1)}$ as transition kernel.  Given the
auxiliary process $Y^{(k-1)}$ up to time $n$, $\{Y_m^{(k-1)}, m \leq n\}$, and
the current state $Y_{n}^{(k)}$ of the process of level $k$, the Equi-Energy
sampler draws $Y_{n+1}^{(k)}$ as follows:
\begin{itemize}
\item (Metropolis-Hastings step) with probability $1-\InterractionProba$,
  $Y_{n+1}^{(k)} \sim P^{(k)}(Y_n^{(k)},\cdot)$.
\item (equi-energy step) with probability $\InterractionProba$, the algorithm
  selects a state $Z_{n+1}$ from the auxiliary process having an energy close
  to that of the current state. An acceptance-rejection ratio is then computed
  and if accepted, $Y_{n+1}^{(k)}=Z_{n+1}$; otherwise,
  $Y_{n+1}^{(k)}=Y_n^{(k)}$.
\end{itemize}
In practice, \cite{kou:zhou:wong:2006} only apply the equi-energy step when
there is at least one point in each ring. In \cite{kou:zhou:wong:2006}, the
distance between the energy of two states is defined as follows. Consider an
increasing sequence of positive real numbers
\begin{equation}
\label{eq:definition-bornea}
\bornea_0=0<\bornea_1<\dots<\bornea_S=+\infty \eqsp. 
\end{equation}
If the energies of two states $x$ and $y$ belong to the same energy ring, i.e.
if there exists $1\leq \ell \leq S$ such that $\bornea_{\ell-1} \leq
\mesobs(x),\mesobs(y) < \bornea_\ell$, then the two states are said to have
``close energy''.  The choice of the energy rings is most often a difficult
task. As shown in Figure~\ref{comparaison_10D}[right], the Equi-Energy sampler
is inefficient when the energy rings are not appropriately defined.
The efficiency of the sampler is increased when the variation
  of $\mesobs$ in each ring is small enough so that the equi-energy move is
  accepted with high probability.

\subsection{The Adaptive Equi-Energy sampler}
\label{sec:EEadaptive:algo}
We propose to modify the Equi-Energy sampler by adapting the energy rings ``on
the fly", based on the history of the algorithm.  Our new algorithm, so called
{\em Adaptive Equi-Energy} sampler (AEE) is similar to the Equi-Energy sampler
of \cite{kou:zhou:wong:2006} except for the equi-energy step, which relies on
adaptive boundaries of the rings. For the definition of the process $Y^{(k)}$,
$k \geq 2$, adaptive boundaries computed from the process $Y^{(k-1)}$ are used.

For a distribution $\T$ in $\Theta$, denote by $\bornea_{\T,\ell}$, $ \ell \in
\{1, \cdots, S-1 \}$ the bounds of the rings, computed from r.v.  with
distribution $\T$; by convention, $\bornea_{\T,0} = 0 \leq \bornea_{\T,1} \leq
\cdots \leq \bornea_{\T,S-1} \leq \bornea_{\T,S} = +\infty$.  Define the
associated energy rings $H_{\T,\ell} = [\bornea_{\T,\ell-1},
\bornea_{\T,\ell})$ for $\ell \in \{1, \cdots, S \}$. We consider selection
functions $g_{\T}(x,y)$ of the form
\begin{equation}
\label{eq:FonctionSelectionEE}
g_{\T}(x,y)  =  \sum_{\ell=1}^S h_{\T,\ell}(x) h_{\T, \ell}(y) \eqsp,  \quad  h_{\T, \ell}(x) = \left(1-d(\mesobs(x),\anneau_{\T,\ell}) \right)_+ \eqsp,
\end{equation}
where $d(\mesobs(x), \anneau_{\T,\ell})$ measures the distance between
$\mesobs(x)$ and the ring $\anneau_{\T, \ell}$.  By convention $h_{\T,\ell}=0$
if $\anneau_{\T,\ell} = \emptyset$.  We finally introduce a set of selection
kernels $\{K_\T^{(k)}, \T \in \Theta \}$ for all $k \in \{2, \cdots, K \}$
defined by
\begin{align} \label{eq:DefinitionKthetaAEE}
  K_{\theta}^{(k)}(x,A) &=\int_{A}
  \alpha_{\theta}^{(k)}(x,y)\frac{g_{\theta}(x,y)\theta(\rmd y)}{\int
    g_{\theta}(x,z)\theta(\rmd z)}
  +\mathbf{1}_A(x) \int \{1-\alpha_{\theta}^{(k)}(x,y)\} \frac{g_{\theta}(x,y)\theta(\rmd y)}{\int g_{\theta}(x,z)\theta(\rmd z)} \eqsp,
\end{align}
where
\begin{equation}
  \label{eq:DefinitionAlphaAEE}
  \alpha_{\theta}^{(k)}(x,y) =1 \wedge \left( \frac{\pi^{\beta_{k}-\beta_{k-1}}(y)
    \int g_{\theta} (x,z)
    \theta(\rmd z)}{\pi^{\beta_{k}-\beta_{k-1}}(x)\int g_{\theta}
    (y,z) \theta(\rmd z)} \right) \eqsp. \end{equation}
 $K_\T^{(k)}$ is associated to the equi-energy step when defining $Y^{(k)}$: a draw under  the selection kernel proportional to $ g_{\theta}(x,y)\theta(\rmd y)$ is combined with an acceptance-rejection step. The acceptance-rejection step is defined so that when $\T \propto \pi^{\beta_{k-1}}$, $\pi^{\beta_k}$ is invariant for $K_\T^{(k)}$~\cite{kou:zhou:wong:2006}.
 
 This equi-energy step is only allowed when each ring contains at least one
 point (of the auxiliary process $Y^{(k-1)}$ up to time $n$). We therefore
 introduce, for all positive integer $m$, the set $ {\tetaset}_m$:
\begin{equation}
\label{eq:ThetaM}
  {\tetaset}_m \eqdef \left\{\theta \in \Theta: \frac{1}{m} \leq \inf_x \int g_\theta(x,y)  \theta(\rmd y) \right\} \eqsp.
\end{equation}
 
With these notations, AEE satisfies for any $n \geq 0$ and $k \in \{1, \cdots,
K \}$,
\begin{equation}
  \label{eq:interactingMCMC}
  \PE[f(Y_{n+1}^{(k)})|\mathcal{F}_n^{(k)}] = \PE[f(Y_{n+1}^{(k)})|Y_n^{(k)}, Y_m^{(k-1)}, 1 \leq m \leq n]
= P_{\T_n^{(k-1)}}^{(k)}f(Y_n^{(k)}) \eqsp,
\end{equation}
where $\{\mathcal{F}_n^{(k)}, n\geq 0\}$ is the filtration defined by
$\mathcal{F}_n^{(k)}=\sigma \left( \left\{Y_m^{(l)}, 1\leq m \leq n, 1 \leq l
    \leq k \right\} \right)$; the transition kernel is given by $P_\T^{(1)} =
P^{(1)}$ and for $k \geq 2$,
\[
P_\T^{(k)} = (1-\InterractionProba \un_{\T \in \bigcup_{m \geq 1} \Theta_m}) P^{(k)}
+ \InterractionProba \un_{\T \in \bigcup_{m \geq 1} \Theta_m} \ K_{\T}^{(k)} \eqsp;
\]
and $\T_n^{(k)}$ is the empirical distribution
\begin{equation}
  \label{eq:DefinitionThetan}
  \T_n^{(k)} = \frac{1}{n} \sum_{m=1}^n \delta_{Y_m^{(k)}} \eqsp, \qquad k \in \{1, \cdots, K \}, n \geq 1 \eqsp.
\end{equation}
Different functions $d$ can be chosen. For example, the function given by
\begin{equation}
  \label{eq:hard-distance}
  d(\mesobs(x),\anneau_{\theta,\ell})= \un_{ \Rset \setminus \anneau_{\theta,\ell}}(\mesobs(x))= \left\{
    \begin{array}{ll}
0 & \text{if} \ \mesobs(x) \in \anneau_{\theta,\ell}, \\
1 & \text{otherwise}
    \end{array}
\right.
\end{equation}
yields to a selection function $g_\theta$ such that $g_\theta(x,y) = 1$ iff
$x,y$ are in the same energy ring and $g_\theta(x,y)=0$ otherwise.  In this
case, the acceptance-rejection ratio $\alpha_\theta^{(k)}(x,y)$ is equal to $1
\wedge (\pi^{\beta_{k}-\beta_{k-1}}(y) / \pi^{\beta_{k}-\beta_{k-1}}(x))$ upon
noting that by definition of the proposal kernel, the points $x$ and $y$ are in
the same energy ring.  By using this ``hard" distance during the equi-energy
jump, all the states of the auxiliary process having their energy in the same
ring as the energy of the current state are chosen with the same probability,
while the other auxiliary states have no chance to be selected.

Other functions  $d$ could be chosen, such as ``soft" selections of the form
\begin{equation}
  \label{eq:soft:selection:function}
  d(\mesobs(x),\anneau_{\theta,l})= \frac{1}{r} \min_{y \in \anneau_{\theta,\ell}} |\mesobs(x) -y|
\eqsp,
\end{equation}
where $r >0$ is fixed. With this ``soft'' distance, given a current state
$Y_n^{(k)}$, the probability for each auxiliary state $Y_i^{(k-1)}$, $i \leq
n$, to be chosen is proportional to
$g_{\theta_n^{(k-1)}}(Y_n^{(k)},Y_i^{(k-1)})$.  Then, the ``soft" selection
function allows auxiliary states having an energy in a $r$-neighborhood of the
energy ring of $\mesobs(Y_n^{(k)})$ to be chosen, as well as
states having their energy in this ring.  Nevertheless, this
selection function yields an acceptance-rejection ratio
$\alpha_\theta^{(k)}$ which may reveal to be quite costly to evaluate.

The asymptotic behavior of AEE will be addressed in Section~\ref{main:results}.
The intuition is that when the empirical distribution $\theta_n^{(k-1)}$ of the
auxiliary process of order $k-1$ converges (in some sense) to
$\theta_\star^{(k-1)}$, the process $\{Y_n^{(k)}, n\geq 0\}$ will behave (in
some sense) as a Markov chain with transition kernel
$P_{\theta_\star^{(k-1)}}^{(k)}$.

\subsection{A toy example (I)}

To highlight the interest of our algorithm, we consider toy examples: the
target density $\pi$ is a mixture of {$\Rset^d$}-valued
Gaussian\footnote{MATLAB codes for AEE are available at the
    address http://perso.telecom-paristech.fr/$\sim$schreck/index.html} . This
model is known to be difficult, as illustrated (for example) in
\cite{atchade:fort:moulines:priouret:2011} for a random walk
Metropolis-Hastings sampler (SRWM), an EE-sampler and a parallel tempering
algorithm. Indeed, if the modes are well separated, a Metropolis-Hastings
algorithm using only ``local moves" is likely to remain trapped in one of the
modes for a long-period of time. In the following, AEE is implemented with ring boundaries computed as described in Section~\ref{ring:construction}.

Figure~\ref{comparaison}.(a) displays the target density $\pi$ and the
simulated one for three different algorithms (SRWM, EE and AEE) in
one dimension. The histograms are obtained with $10^5$
samples; for EE and AEE, the probability of interaction is
$\InterractionProba=0.1$, the number of parallel chains is equal to $K=5$ and
the number of rings is $S=5$. For the adaptive definition of the rings in AEE,
we choose the ``hard" selection (\ref{eq:hard-distance}) and the construction
of the rings defined in Section~\ref{ring:construction}. In the same vein,
Figure~\ref{comparaison_2D} displays the points obtained by the three
algorithms when sampling a mixture of two Gaussian distributions in
two dimensions. As expected, in both figures, SRWM never
explores one of the modes, while EE and AEE are far more efficient.

\begin{figure}[ht!]
\begin{center}
  \includegraphics[height=5.1cm,width=.8\linewidth]{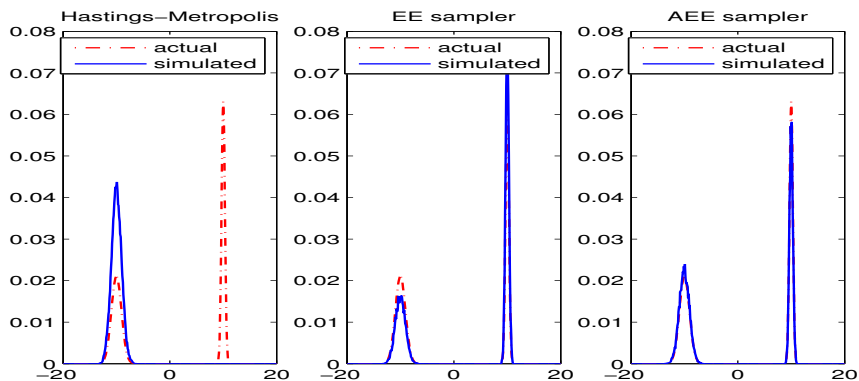}
\end{center}
    \caption{Comparison of SRWM (left), EE (center) and AEE (right) for a Gaussian mixture in one dimension} \label{comparaison}
\end{figure}

\begin{figure}[ht!]
\begin{center}
  \includegraphics[width=13cm]{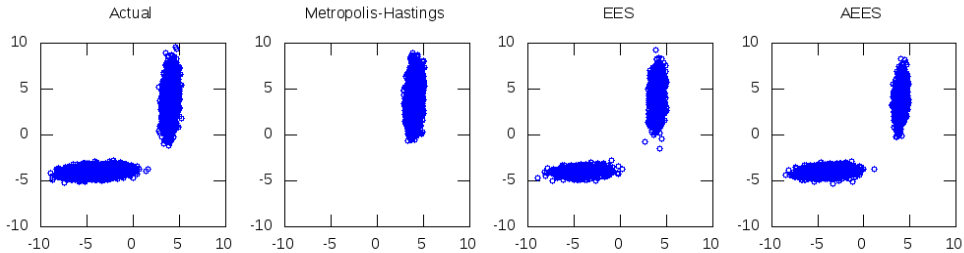}
\end{center}
\caption{Comparison of the algorithms for a Gaussian mixture in two dimensions: (from left to right) the true density, SRWM, EE and AEE.}
\label{comparaison_2D}
\end{figure}

To compare EE and AEE in a more challenging situation, we consider the case of
a mixture with two components in ten dimensions.  We run EE
and AEE with $K=3$ parallel chains with respective temperatures $T_1=1, T_2=9,
T_3=60$, the probability of jump $\InterractionProba$ is equal to $0.1$, and
the number of rings is $S=50$.  Both algorithms are initialized in one of the
two modes of the distribution. For the Metropolis-Hastings step, we use a
Symmetric Random Walk with Gaussian proposal; the covariance matrix of the
proposal is of the form $c \ I$ where $c$ is calibrated so that the mean
acceptance rate is approximatively $0.25$. Figure~\ref{comparaison_10D}
displays, for each algorithm, the $L^1$-norm of the empirical mean, averaged
over $10$ independent trajectories, as a function of the length of the chains.

In order to show that the efficiency of EE depends crucially upon the choice of
the rings, we choose a set of boundaries so that in practice, along one run of
the algorithm, some of the rings are never reached.
Figure~\ref{comparaison_10D}(a) compares EE and AEE in this extreme case: even
after $2 \times 10^5$ iterations, all of the equi-energy jumps
are rejected for the (non-adaptive) EE, and the algorithm is trapped in one of
the modes.  This does not occur for AEE, and the $L^1$-error tends to zero as
the number of iterations increases.  This illustrates that our adaptive
algorithm avoids the poor behaviors that EE can have when the choice of its
design parameters is inappropriate.

We now run EE in a less extreme situation: we choose (fixed) energy rings so
that the sampler can jump more easily than in the previous experiment between
the modes.  Figure~\ref{comparaison_10D}(b) illustrates that the adaptive
choice of the energy rings speeds up the convergence, as it makes the
equi-energy jumps be more often accepted. To have a numerical comparison, the
equi-energy jumps were accepted about ten times more often for AEE than for 
EE.

\begin{figure}[ht!]
\begin{center}
     \begin{tabular}{cc}
      \includegraphics[height=4.2cm,width=.45\linewidth]{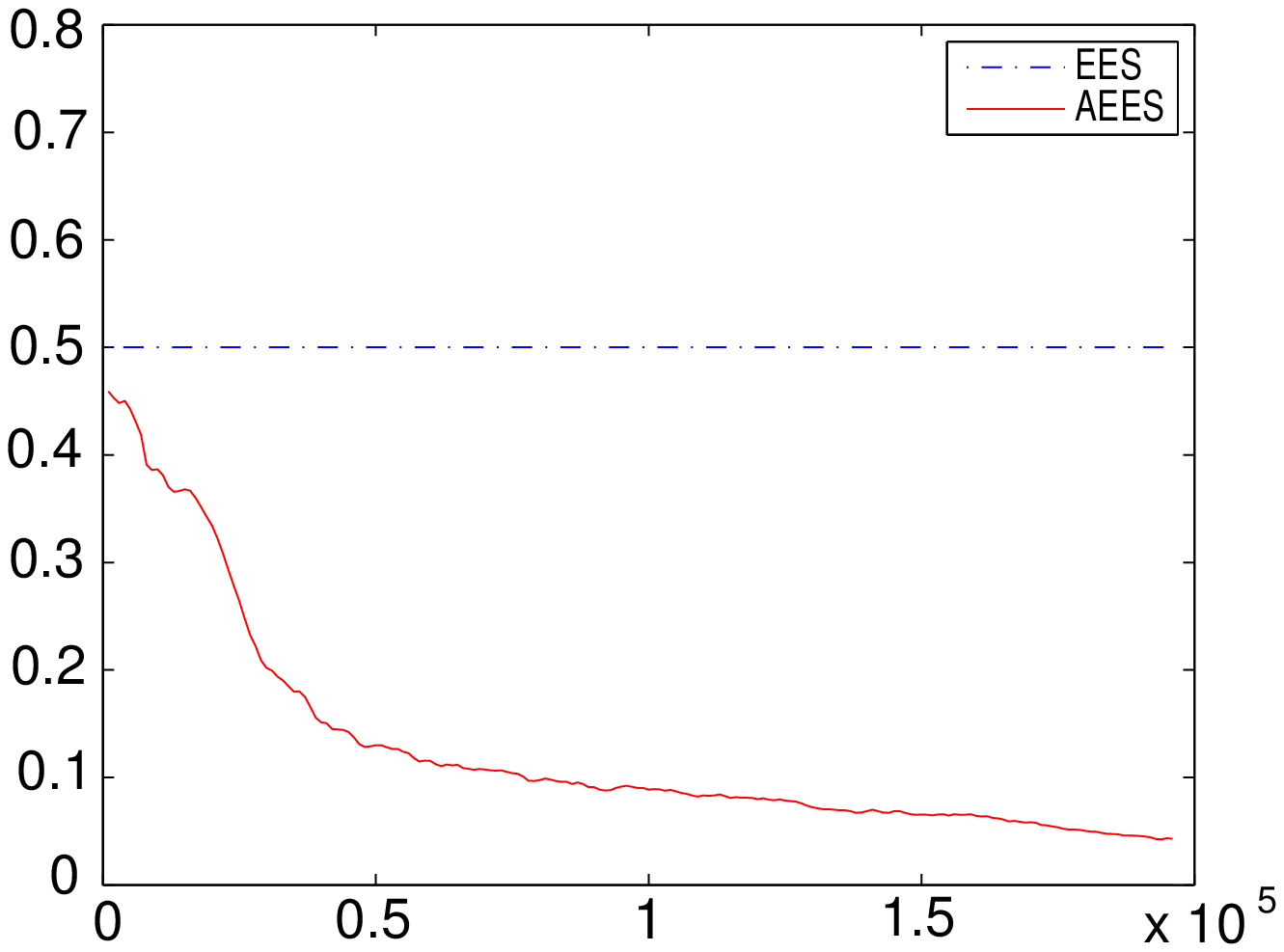} &
       \includegraphics[height=4.2cm,width=.45\linewidth]{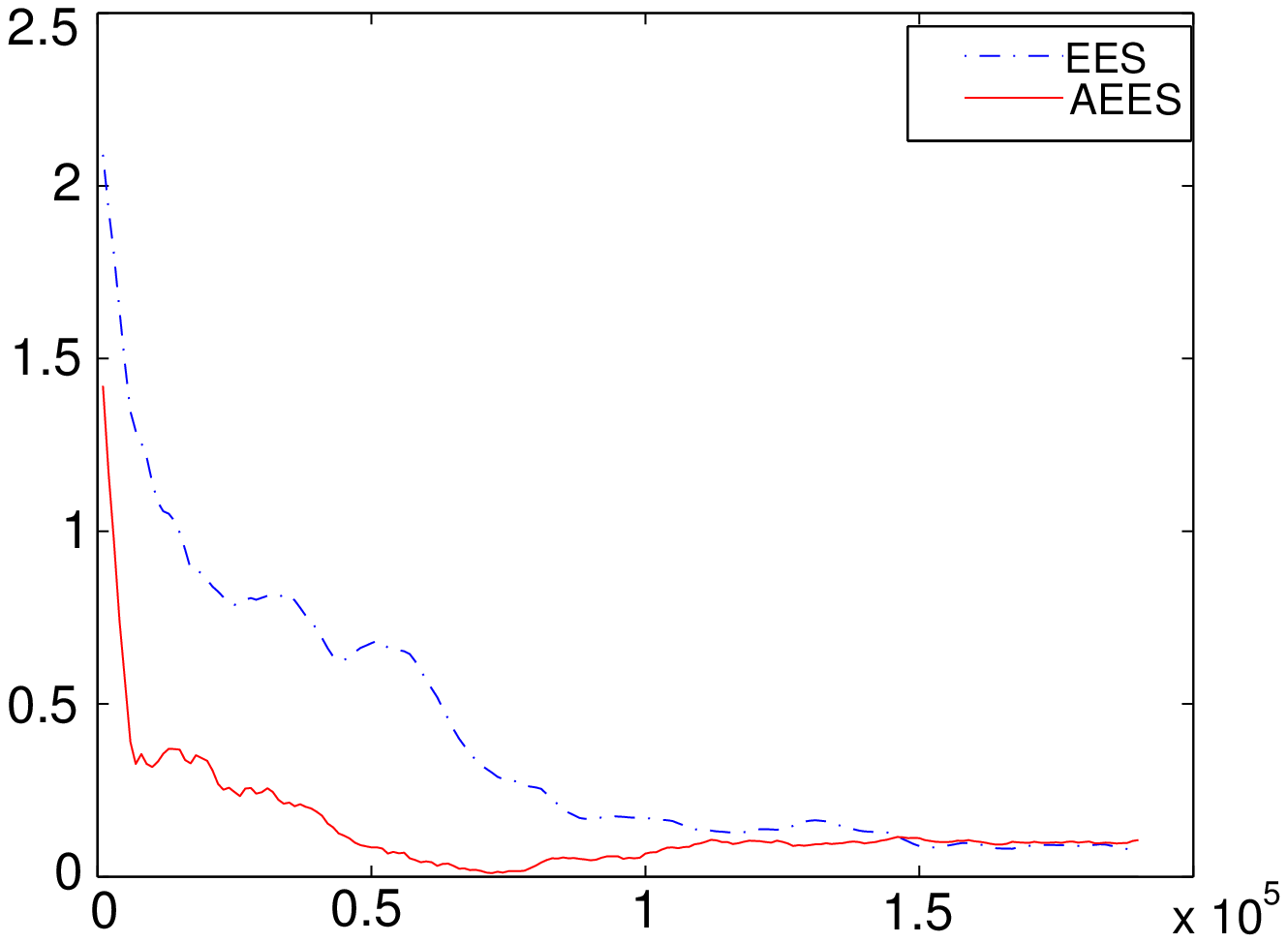} \\
     (a) & (b)\\
    \end{tabular}
\end{center}
    \caption{Error of EE (dashed line) and AEE for two different target densities in ten dimensions.} \label{comparaison_10D}
\end{figure}

\subsection{Toy example (II)}
\label{sec:toyexample}
For a better understanding on how our algorithm behaves,
Figure~\ref{erreur:epsilon:strates}.(a) displays the evolution of the ring
bounds used in the definition of $Y^{(K)}$. In this numerical application, the
target density is a mixture of two Gaussian distributions in one dimension; EE and
AEE are run with $K=5$ chains, $S=5$ rings and $\InterractionProba=0.1$, for a
number of iterations varying from $0$ to $10^5$. As expected, the ring bounds
become stable after a reasonable number of iterations. Moreover, we observed
that the (non-adaptive) EE run with the rings fixed to the
limiting values obtained with AEE behaves remarkably well.

Finally, to have an idea on the role played by $\InterractionProba$,
Figure~\ref{erreur:epsilon:strates}.(b) displays the average $L^1$ error of AEE
for a mixture of two Gaussian distributions in one dimension,
after $2 \times 10^5$ iterations and for 100 independent
trajectories when $\InterractionProba$ is varying from 0 to 1. If
$\InterractionProba$ is too small, AEE is not mixing well enough, and if
$\InterractionProba$ is too large, the algorithm jumps easily from one mode to
another but does not explore well enough each mode, which explains the `u'
shape of the curve. This experiment suggests that there exists
an optimal value for $\InterractionProba$, but to our best knowledge, the
optimal choice of this design parameter is an open problem.

\begin{figure}[ht!]
\begin{center}
     \begin{tabular}{cc}
      \includegraphics[height=3.3cm,width=.49\linewidth]{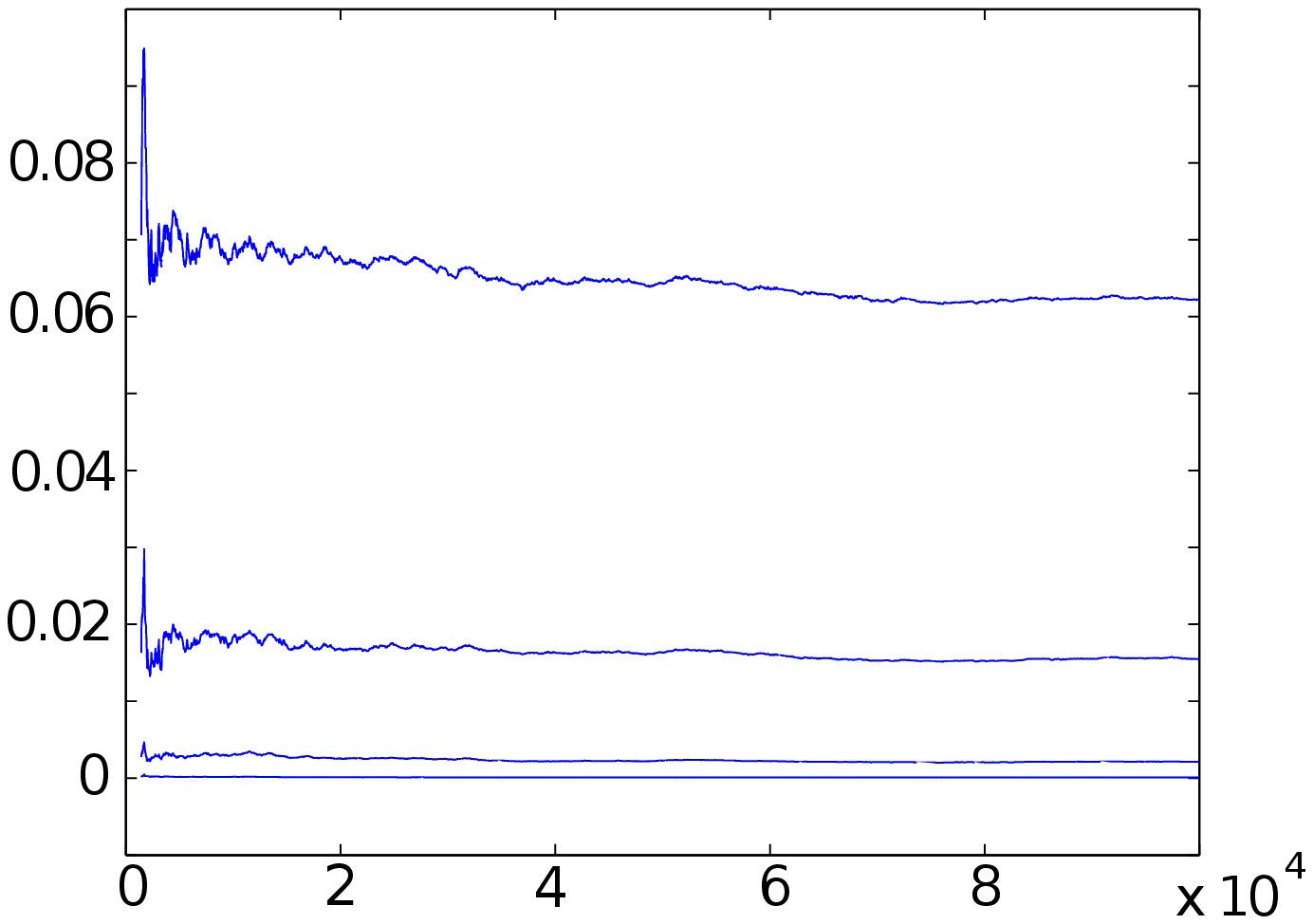} &
      \includegraphics[height=3.3cm,width=.4\linewidth]{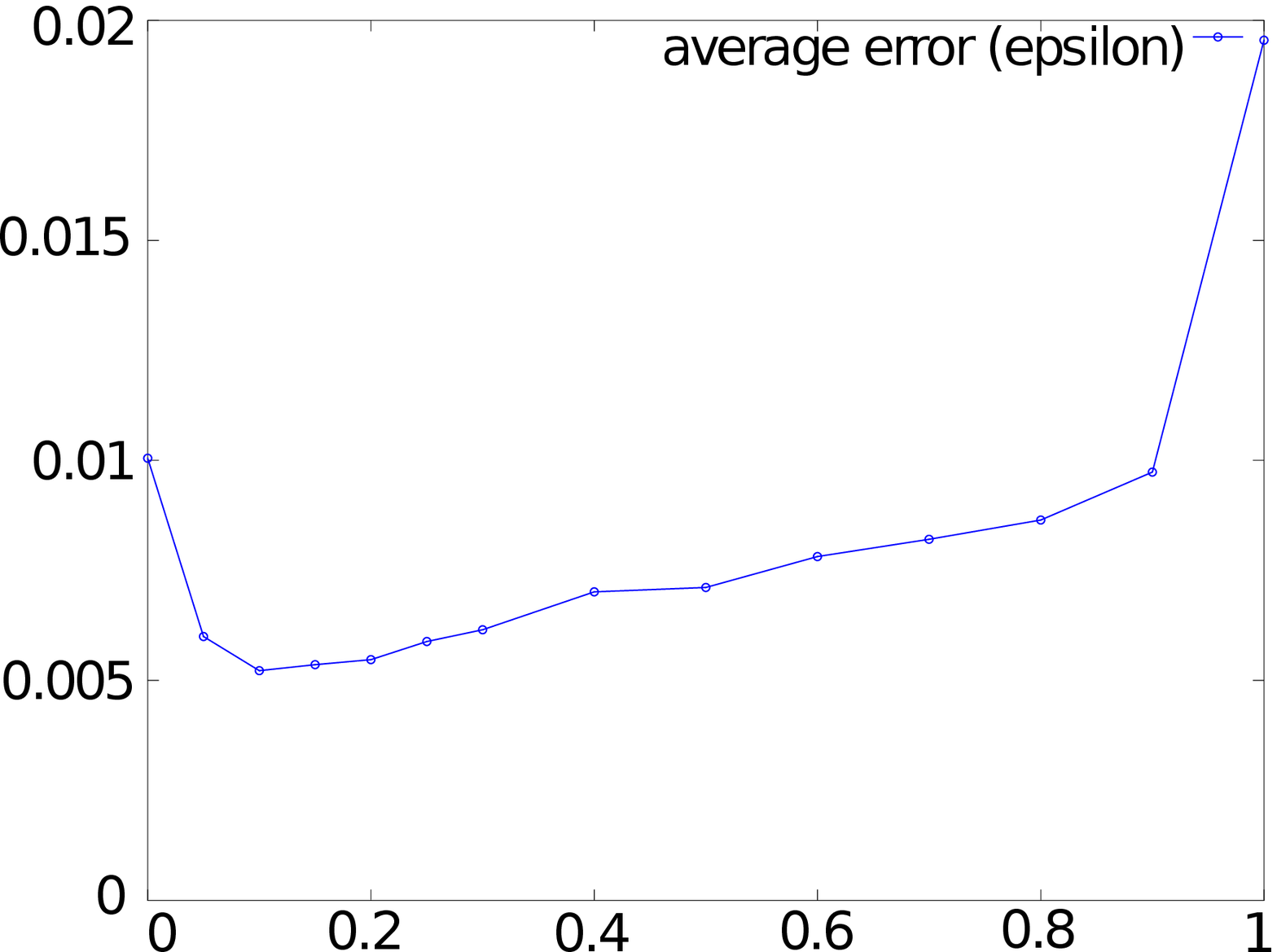} \\
      (a) & (b)\\
    \end{tabular}
\end{center}
    \caption{(a): Evolution of the ring bounds; (b): Averaged error of  AEE as a function of $\InterractionProba$.} \label{erreur:epsilon:strates}
\end{figure}

%%%%%%%%%%%%%%%%%%%%%%%%%%%%%%%%%%%%%%%%%%%%%%%%%%%%%%%%%%%%%%%

\section{Convergence of the Adaptive Equi-Energy sampler} \label{main:results}

In this section, the convergence of the $K$-stages Adaptive Equi-Energy sampler
is established.  In order to make the proof easier, we consider the case when
the distance function $d$ in the definition of the selection function
(\ref{eq:FonctionSelectionEE}) is given by~(\ref{eq:soft:selection:function}).

\cite{fort:moulines:priouret:2010} provide sufficient conditions for the
convergence of the marginals and the strong LLN (s-LLN) of interacting MCMC
samplers.  We use their results and show the convergence of the marginals i.e.
\[ \lim_{n \rightarrow \infty} \PE\left[f(Y_n^{(K)}) \right] =
\pi(f) \eqsp,
\]
for any continuous bounded functions $f$. Note that this
  implies that this limit holds for any indicator function $f =\un_A$ such that
  $\PP(\partial A)=0$ where $\partial A$ denotes the boundary
    of $A$ \cite[Theorem
  2.1]{billingsley:1968}. We then establish the s-LLN: for a wide class of
  continuous (un)bounded functions $f$, 
\[
\lim_{n \rightarrow \infty} \frac{1}{n} \sum_{m=0}^{n-1}
f(Y_m^{(K)}) = \pi(f) \eqsp, \PP-\mathrm{a.s.}
\]

\subsection{Assumptions}
\label{subsec:hyp}
Our results are established for target distributions $\pi$ satisfying
\begin{hypEE}
\label{EE1}
 \begin{enumerate}[(a)]
 \item \label{EE1:a} $\pi$ is the density of a probability distribution
   on the measurable Polish space $(\X, \Xsigma)$ and
     $\sup_{\X}\pi<\infty$ and for any $s \in (0,1]$, $\int \pi^s(x) \ \rmd x < \infty$.
  \item \label{EE1:b} $\pi$ is continuous and positive on $\X$.
 \end{enumerate}
\end{hypEE}
Usually, the user knows $\pi$ up to a normalizing constant: hereafter,
$\mesobs$ will denote this available (unnormalized) density.

As in \cite{fort:moulines:priouret:2010}, we first introduce a set of
conditions that will imply the geometric ergodicity of the kernels
$P_{\theta}^{(k)}$, and the existence of an invariant probability measure for
$P_{\theta}^{(k)}$ (see conditions E\ref{EE3}).  We finally introduce
conditions on the boundaries of the adaptive energy rings (see conditions
E\ref{EE:ring}).  Examples of boundaries satisfying E\ref{EE:ring} and computed
from quantile estimators are given in Section~\ref{ring:construction}
(see also \cite{schreck:etal:2012} for stochastic
  approximation-based adapted boundaries).

Convergence of adaptive and interacting MCMC samplers is addressed in the
literature by assuming containment conditions and diminishing adaptations (so
called after \cite{roberts:rosenthal:2007}). Assumptions E\ref{EE3} is the
main tool to establish a (generalized) containment condition. In our
algorithm, the adaptation mechanism is due to (a) the interaction with an
auxiliary process and (b) the adaption of the rings.  Therefore, assumptions
E\ref{EE3} and E\ref{EE:ring} are related to the diminishing adaptation
condition (see e.g.  Lemma~\ref{lemme:cv:dv:proba} in
Section~\ref{secproof:ergo_ees}).

\begin{hypEE}
\label{EE3}
For each $k \in \{1, \dots, K \}$:
 \begin{enumerate}[(a)]
 \item \label{EE3:a} $P^{(k)}$ is a $\phi$-irreducible transition kernel which is
   Feller on $(\mathbf{X},\mathcal{X})$ and such that $\pi^{\beta_k} P^{(k)}=\pi^{\beta_k}$.
 \item \label{EE3:b} There exist $\lambda_k \in (0,1)$, $b_k<+\infty$ and
   $\tau_k \in \left( 0, \tau_{k-1} \beta_{k-1} / \beta_k \right)$ such that $
   P^{(k)}W_k \leq \lambda_k W_k + b_k$ with
\begin{equation}
  \label{eq:DefinitionW}
   W_k(x) =  \left( \frac{\pi^{\beta_k}(x)}{\sup_{\X} \pi^{\beta_k}}\right)^{-\tau_k} \eqsp;
\end{equation}
by convention, $\tau_0 \beta_0 =\beta_1$.
\item \label{EE3:c} For all $p \in (0,\sup_ {\mathbf{X}} \pi)$, the sets
  $\{\pi \geq p\}$ are 1-small for $P^{(k)}$.
 \end{enumerate}
\end{hypEE}
Note that by definition of $\tau_k$ and E\ref{EE1}\ref{EE1:a}, $W_{k+1} \in
\mathcal{L}_{W_k}$ and $\int W_{k}(x) \pi^{\beta_{k}}(x) \rmd x < \infty$.

E\ref{EE3} is satisfied for example if for each $k$, $P^{(k)}$
  is a symmetric random walk Metropolis Hastings kernel; and $\pi$ is a
  sub-exponential target density~\cite{roberts:tweedie:1996,jarner:hansen:2000}.

In our algorithm, $Y^{(1)}$ is a Markov chain with transition kernel $P^{(1)}$.
As discussed in \cite{meyn:tweedie:1993}[chapters 13 and 17], E\ref{EE3} is sufficient
to prove ergodicity and a s-LLN for $Y^{(1)}$. E\ref{EE3} also implies uniform
$W_1$-moments for $Y^{(1)}$. These results, which initializes our proof by
recurrence of the convergence for the process number $K$, is given in
Proposition~\ref{prop:lgn:first}. Define the probability distributions
\begin{equation}
  \label{eq:ThetaStar}
  \theta_\star^{(k)}(\rmd x) = \frac{\pi^{\beta_{k}}(x)}{\int \pi^{\beta_{k}}(z) \refmes(\rmd
  z)}\refmes(\rmd x)\eqsp, \qquad k \in \{1, \cdots, K \} \eqsp.
\end{equation}

 \begin{proposition}
\label{prop:lgn:first}
Assume E\ref{EE1}\ref{EE1:a}, E\ref{EE3} and $\PE\left[W_1(Y_0^{(1)})
\right] < \infty$. Then,
  \begin{enumerate}[(a)]
  \item \label{EE2:c} For all bounded measurable functions $f$, $\lim_{n \to \infty}
    \PE\left[f(Y_n^{(1)})\right] = \T_\star^{(1)}(f)$.
  \item \label{EE2:a} $\theta_{\star}^{(1)}(W_2) < +\infty$, and for any
    measurable function $f$ in $\mathcal{L}_{W_1}$, $\lim_{n \to \infty}
    \theta_n^{(1)}(f) = \theta_{\star}^{(1)}(f)$ a.s.
  \item \label{EE2:b} $\sup_n \PE \left[W_1\left(Y_n^{(1)} \right) \right] < \infty$.
  \end{enumerate}
 \end{proposition}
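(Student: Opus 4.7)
My plan is to observe that since $Y^{(1)}$ is a \emph{bona fide} time-homogeneous Markov chain with transition kernel $P^{(1)}$ (no adaptation, no interaction), all three statements follow from standard drift–minorization machinery (Meyn–Tweedie). I would dispatch the three items in the order (c), (a), (b), since each step feeds the next.

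\textbf{Step 1 (part (c), uniform moments).} I would iterate the drift condition E\ref{EE3}\ref{EE3:b}: applying $P^{(1)}$ and taking expectations gives $\PE[W_1(Y_{n+1}^{(1)})] \leq \lambda_1 \PE[W_1(Y_n^{(1)})] + b_1$, hence by induction
\[
\PE[W_1(Y_n^{(1)})] \leq \lambda_1^n\, \PE[W_1(Y_0^{(1)})] + \frac{b_1}{1-\lambda_1} \eqsp,
\]
which is bounded since $\PE[W_1(Y_0^{(1)})]<\infty$ by assumption. This yields (c).

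\textbf{Step 2 (part (a), convergence of marginals).} I would verify the hypotheses of a geometric-ergodicity theorem of Meyn--Tweedie type. The drift function $W_1$ is a (strictly) monotone function of $\pi^{\beta_1}$, so its sublevel sets $\{W_1 \leq d\}$ coincide with superlevel sets $\{\pi \geq p(d)\}$ for some $p(d)>0$, and E\ref{EE3}\ref{EE3:c} asserts that these are $1$-small for $P^{(1)}$. Combined with E\ref{EE3}\ref{EE3:a} ($\phi$-irreducibility, Feller, $\pi^{\beta_1}$-invariance) and E\ref{EE3}\ref{EE3:b}, the standard theory (Meyn--Tweedie, Thm.~15.0.1/16.0.1) gives that $P^{(1)}$ is positive Harris recurrent, aperiodic and geometrically ergodic in $W_1$-norm, with invariant distribution $\theta_\star^{(1)}$. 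In particular, for every $x \in \X$,
\[
\lim_{n\to\infty} \|P^{(1),n}(x,\cdot)-\theta_\star^{(1)}\|_{TV}=0 \eqsp,
\]
and since $\sup_n \PE[W_1(Y_n^{(1)})]<\infty$ from Step~1, dominated convergence (after conditioning on $Y_0^{(1)}$) yields $\lim_n \PE[f(Y_n^{(1)})]=\theta_\star^{(1)}(f)$ for every bounded measurable $f$.

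\textbf{Step 3 (part (b), strong LLN and moment).} Integrating the drift inequality against the invariant probability $\theta_\star^{(1)}$ and rearranging gives $\theta_\star^{(1)}(W_1) \leq b_1/(1-\lambda_1) < \infty$. Moreover, by the remark following E\ref{EE3}, $W_2 \in \mathcal{L}_{W_1}$, so $\theta_\star^{(1)}(W_2)<\infty$. Finally, positive Harris recurrence of $P^{(1)}$ combined with $\theta_\star^{(1)}(W_1)<\infty$ allows me to invoke the Birkhoff-type ergodic theorem for Harris chains (Meyn--Tweedie, Thm.~17.0.1): for every $f\in\mathcal{L}_{W_1}$ we have $\theta_\star^{(1)}(|f|)\leq |f|_{W_1}\,\theta_\star^{(1)}(W_1)<\infty$, hence $\theta_n^{(1)}(f) \to \theta_\star^{(1)}(f)$ $\PP$-a.s.\ for every starting distribution.

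\textbf{Main obstacle.} No deep obstacle; the only thing to check carefully is that the ``small sets'' supplied by E\ref{EE3}\ref{EE3:c} (level sets of $\pi$) really cover the sublevel sets of the drift function $W_1$, which I handle by monotonicity of $s \mapsto s^{-\tau_1}$. Everything else is a direct citation of the standard Meyn--Tweedie ergodicity and LLN theorems.
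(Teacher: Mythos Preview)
Your proposal is correct and takes essentially the same approach as the paper, which does not give a detailed proof but simply remarks that E\ref{EE3} is sufficient, citing \cite[Chapters 13 and 17]{meyn:tweedie:1993}. Your explicit verification that sublevel sets of $W_1$ coincide with superlevel sets of $\pi$ (so E\ref{EE3}\ref{EE3:c} supplies the needed small sets), together with the iteration of the drift and the Harris-chain ergodic theorem, is exactly the expected unpacking of that citation.
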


\begin{hypEE}  \label{EE:ring}
  \begin{enumerate}[(a)]
  \item \label{ring:c} For any $k \in \{1, \dots, K-1 \}$, $\inf_{\ell \in \{1,
      \cdots, S-1 \} } \int h_{\theta_\star^{(k)},\ell}(y) \ 
    \theta_\star^{(k)}(\rmd y) >0$.
  \item \label{ring:a} For any $k \in \{1, \dots, K-1\}$ and $\ell \in \{1,
    \cdots, S-1 \}$, $\lim_{n \to \infty} \left| \bornea_{\theta_n^{(k)}, \ell}
      - \xi_{\theta_\star^{(k)}, \ell} \right| =0$ w.p.1
\item \label{ring:b} There exists $\vitborn>0$ such that for any $k \in \{1,
  \dots, K-1\}$, any $\ell \in \{1, \cdots, S-1 \}$, and any $\vit \in
  (0,\vitborn)$, $\limsup_n \ n^{\vit} \ \left| \bornea_{\theta_{n+1}^{(k)},
      \ell} - \xi_{\theta_n^{(k)}, \ell} \right| < \infty$ w.p.1.
 \end{enumerate}
\end{hypEE}
Note that by definition of $h_{\T,\ell}$ (see (\ref{eq:FonctionSelectionEE}))
\begin{equation}
  \label{eq:CS:EE:ringa}
  \int h_{\theta,\ell}(y) \ \theta(\rmd y) \geq \theta (\{y: \pi_u(y) \in
H_{\theta,\ell} \}) \eqsp.
\end{equation}
Condition E\ref{EE:ring}\ref{ring:a} states that the rings
$\{H_{\theta_n^{(k)},\ell)}, n\geq 0 \}$ converge to
$H_{\theta_\star^{(k)},\ell}$ w.p.1; therefore, E\ref{EE:ring}\ref{ring:c} is
satisfied as soon as the limiting rings are of positive probability under the
distribution of $\pi_u(Z)$ when $Z \sim \theta_\star^{(k)}$.

When the energy bounds are fixed, the conditions
  E\ref{EE:ring}\ref{ring:a}-\ref{ring:b} are clearly satisfied and
  E\ref{EE:ring}\ref{ring:c} holds under convenient choice of the rings. We
  will discuss in Section~\ref{ring:construction} how to check the condition
  E\ref{EE:ring} with adaptive energy bounds.

\subsection{Convergence results}
\label{convergence:aees}
Proposition~\ref{drift:small:aees} shows that the kernels $P_\theta^{(k)}$
satisfy a geometric drift inequality and a minorization condition, with
constants in the drift independent of $\theta$ for $\theta\in \tetaset_m$ ($\tetaset_m$ being defined in (\ref{eq:ThetaM})). The
proof is in Appendix~\ref{secproof:drift:small:aees}.
\begin{proposition} \label{drift:small:aees} Assume E\ref{EE1}\ref{EE1:a} and E\ref{EE3}.
  For all $k \in \{1, \dots, K \}$:
  \begin{enumerate}[(a)]
  \item There exist $\tilde{\lambda}_k \in (0,1)$ and $\tilde{b}_k < + \infty$
    such that for all $m \geq 1$ and any $\theta \in \tetaset_{m}$,
\begin{equation}
\label{eq:drift}
   P_{\theta}^{(k)}W_k \leq \tilde{\lambda}_k W_k +\tilde{b}_k \ m \,  \theta (W_k) \ \eqsp.
\end{equation}
For all $p \in (0,\sup_{\mathbf{X}} \pi)$ and all $\theta \in \bigcup_m
\tetaset_m$, the sets $\{\pi\geq p \}$ are $1$-small for
$P_{\theta}^{(k)}$ and the minorization constants depend neither upon
$\theta$ nor on $m$.
\item \label{drift:small:aees:item2} For all $\T \in \bigcup_m \tetaset_{m}$,
  there exists a probability measure $\pi_{\T}^{(k)}$ invariant for
  $P_{\T}^{(k)}$.  In addition, $\pi_{\theta}^{(k)}(W_k) \leq \tilde{b}_k
  (1-\tilde {\lambda}_k)^{-1} \ m \theta(W_k) $ for $\T \in \tetaset_m$.
  \end{enumerate}
\end{proposition}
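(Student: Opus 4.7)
The plan is to treat $k=1$ separately, since $P_\theta^{(1)}=P^{(1)}$ does not depend on $\theta$; the drift and minorization of (a) then follow at once from E\ref{EE3}(\ref{EE3:b})--(\ref{EE3:c}) (the additive constant $b_1$ is absorbed into $\tilde b_1\,m\,\theta(W_1)$ using $W_1\ge 1$, hence $m\,\theta(W_1)\ge 1$), and (b) follows from the standard Meyn--Tweedie drift-plus-small-set argument applied to $P^{(1)}$. For $k\ge 2$, I will exploit the decomposition $P_\theta^{(k)}=(1-\InterractionProba)P^{(k)}+\InterractionProba\,K_\theta^{(k)}$ valid on $\bigcup_m\tetaset_m$ and handle the two pieces separately.

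The key estimate for (a) is an upper bound on $K_\theta^{(k)}W_k$. I will use that $0\le h_{\theta,\ell}\le 1$ gives $g_\theta(x,y)\le S$ pointwise, together with $\alpha_\theta^{(k)}\le 1$ and the defining inequality $\int g_\theta(x,z)\,\theta(\rmd z)\ge 1/m$ on $\tetaset_m$. Splitting $K_\theta^{(k)}W_k(x)$ into its proposal-and-accept and rejection parts, the rejection part is trivially $\le W_k(x)$ while the accept part is bounded directly by $S\,m\,\theta(W_k)$; hence $K_\theta^{(k)}W_k(x)\le W_k(x)+S\,m\,\theta(W_k)$. Combined with E\ref{EE3}(\ref{EE3:b}), this yields
\[
P_\theta^{(k)}W_k\;\le\;[\lambda_k+\InterractionProba(1-\lambda_k)]\,W_k+(1-\InterractionProba)b_k+\InterractionProba\,S\,m\,\theta(W_k).
\]
The constant $(1-\InterractionProba)b_k$ is absorbed into the coefficient of $m\,\theta(W_k)$ (since $W_k\ge 1$ implies $m\,\theta(W_k)\ge 1$), giving the drift with $\tilde\lambda_k:=\lambda_k+\InterractionProba(1-\lambda_k)\in(0,1)$ and $\tilde b_k:=(1-\InterractionProba)b_k+\InterractionProba\,S$. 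For the minorization, I will simply note $P_\theta^{(k)}\ge(1-\InterractionProba)P^{(k)}$; together with E\ref{EE3}(\ref{EE3:c}), this makes $\{\pi\ge p\}$ a $1$-small set for $P_\theta^{(k)}$ with minorization measure $(1-\InterractionProba)\nu_{k,p}$, depending neither on $\theta$ nor on $m$.

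For (b), the level sets of $W_k$ coincide with sets of the form $\{\pi\ge p_d\}$ because $W_k$ is a strictly decreasing function of $\pi^{\beta_k}/\sup_\X\pi^{\beta_k}$, so the drift in (a) is a drift toward a small set. Since $P^{(k)}$ is Feller and $\phi$-irreducible by E\ref{EE3}(\ref{EE3:a}) and $P_\theta^{(k)}\ge(1-\InterractionProba)P^{(k)}$, the kernel $P_\theta^{(k)}$ inherits $\phi$-irreducibility, and standard Meyn--Tweedie theory produces an invariant probability $\pi_\theta^{(k)}$ with $\pi_\theta^{(k)}(W_k)<\infty$. Integrating the drift inequality against $\pi_\theta^{(k)}$ and using invariance gives $\pi_\theta^{(k)}(W_k)\le\tilde\lambda_k\,\pi_\theta^{(k)}(W_k)+\tilde b_k\,m\,\theta(W_k)$, whence the announced bound $\pi_\theta^{(k)}(W_k)\le\tilde b_k(1-\tilde\lambda_k)^{-1}m\,\theta(W_k)$.

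The main technical point will be securing $\tilde\lambda_k<1$ uniformly over $\theta\in\tetaset_m$ and $m\ge 1$: the selection kernel $K_\theta^{(k)}$ on its own admits no contraction on $W_k$, so the geometric contraction has to come entirely from the fraction $(1-\InterractionProba)$ of local moves governed by $P^{(k)}$, whose contraction rate $\lambda_k$ is assumed uniform in $\theta$. Once this observation is in place, everything else is routine Meyn--Tweedie bookkeeping.
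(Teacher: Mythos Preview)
Your proof is correct and follows essentially the same route as the paper's: decompose $P_\theta^{(k)}=(1-\InterractionProba)P^{(k)}+\InterractionProba K_\theta^{(k)}$, inherit the contraction from $P^{(k)}$, bound $K_\theta^{(k)}W_k$ by $W_k+{\rm const}\cdot m\,\theta(W_k)$, and get the minorization from $P_\theta^{(k)}\ge(1-\InterractionProba)P^{(k)}$. The only difference is cosmetic: the paper rewrites $K_\theta W_k(x)=W_k(x)+\int W_k(y)\alpha_\theta(x,y)\bigl(1-\pi^{\tau_k\beta_k}(y)/\pi^{\tau_k\beta_k}(x)\bigr)\tilde\theta(x,\rmd y)$, restricts to $\{\pi(y)\le\pi(x)\}$, and uses the elementary bound $\sup_{z\in[0,1]}z(1-z^\sigma)\le\psi(\sigma)$ to obtain the constant $S\,\psi(\tau_k\beta_k/(\beta_k-\beta_{k-1}))$ in place of your $S$. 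Your cruder bound via $\alpha_\theta\le 1$ and $g_\theta\le S$ is simpler and gives the same $\tilde\lambda_k=(1-\InterractionProba)\lambda_k+\InterractionProba$; only the additive constant $\tilde b_k$ differs, which is immaterial for the statement.
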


Theorem~\ref{theo:tout} is proved in Section~\ref{proof}.
Theorem~\ref{theo:tout}\eqref{prop:subsetTheta}  shows that there exists
$m_\star \geq 1$ such that w.p.1, for all $n$ large enough $\theta_n^{(k)}$
belongs to some $\tetaset_{m_\star}$.  Note that in
\cite{andrieu:jasra:doucet:delmoral:2008b}, a s-LLN for the Equi-Energy
sampler is established by assuming that there exists a deterministic positive
integer $m$ such that w.p.1, $\theta_n^{(k)} \in \tetaset_m$ for any $n$.  Such
a condition is quite strong since roughly speaking, it means that after $n$
steps (even for small $n$), all the rings contain a number of point which is
proportional to $n$, w.p.1. This is all the more difficult to guarantee in
practice, that the rings have to be chosen prior to any exploration of $\pi$.
Our approach allows to relax this strong condition.

The convergence of the marginals and the law of large numbers both require the
convergence in $n$ ($k$ fixed) of $\{\pi_{\theta_n^{(k)}}^{(k+1)}(f), n \geq 0
\}$ for some functions $f$. Such a convergence is addressed in
Theorem~\ref{theo:tout}\eqref{prop:cvg:pi}.  We will then have the main
ingredients to establish the convergence results for the processes $Y^{(k)}$,
$k \geq 1$.
\begin{theorem}
  \label{theo:tout} Assume E\ref{EE1}, E\ref{EE3}, E\ref{EE:ring} and $\PE[W_k(Y_0^{(k)})] <
  \infty$ for all $k \in \{1, \cdots, K\}$.
  \begin{enumerate}[(a)]
  \item \label{prop:subsetTheta} There exists $m_\star \geq 1$ such that for
    all $k \in \{1, \dots, K-1 \}$
\begin{align} \label{proba:teta:u}
  \PP\left( \bigcup_{q \geq 1} \bigcap_{n \geq q} \{\theta_n^{(k)} \in
    \tetaset_{m_\star} \} \right)=1 \eqsp.
\end{align}
\item \label{prop:cvg:pi} For any $k \in \{1, \cdots, K \}$, any $a \in (0,1)$
  and any continuous function $f \in \mathcal{L}_{W_{k}^a}$,
\[
\lim_{n \to \infty} \pi_{\theta_n^{(k-1)}}^{(k)}(f) = \theta_{\star}^{(k)}(f)
\eqsp, \ \text{w.p.1} \eqsp.
\]
\item For any $k \in \{1, \cdots, K\}$ and for all bounded continuous function
  $f: \X \to \Rset$, $\lim_{n \to \infty} \mathbb{E}[f(Y_n^{(k)})] =
  \theta_{\star}^{(k)}(f)$.
\item Let $a \in (0,\frac{1+\vitborn}{2} \wedge 1)$. For any $k \in \{1,
  \cdots, K\}$ and for all continuous function $f$ in $\mathcal{L}_{W_{k}^a}$
\begin{align*}
  \lim_{n \to \infty} \frac{1}{n} \sum_{m=1}^n f(Y_m^{(k)}) = \theta_{\star}^{(k)}(f) \qquad
  \PP-\text{a.s.} \eqsp.
\end{align*}
  \end{enumerate}
\end{theorem}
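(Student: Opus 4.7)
My plan is a joint induction on $k$, with the base case $k=1$ entirely covered by Proposition~\ref{prop:lgn:first}: $Y^{(1)}$ is a Markov chain, so \eqref{prop:cvg:pi} is vacuous at $k=1$ and items (c), (d) at $k=1$ are exactly \ref{EE2:c} and \ref{EE2:a} there; only \eqref{prop:subsetTheta} at $k=1$ uses the extra argument described below. Within the inductive step at level $k$, the items must be treated in the order \eqref{prop:subsetTheta} at index $k-1$, then \eqref{prop:cvg:pi}, (c), (d) at index $k$, since the drift of Proposition~\ref{drift:small:aees} only holds on $\bigcup_m \Theta_m$, and the abstract convergence results of \cite{fort:moulines:priouret:2010} invoked for (c)--(d) need both the containment coming from \eqref{prop:subsetTheta} and the invariant-measure convergence from \eqref{prop:cvg:pi}.

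For \eqref{prop:subsetTheta}, the key observation is that with the soft distance (\ref{eq:soft:selection:function}) every $x$ belongs to some ring, so $h_{\theta,\ell(x)}(x)=1$ for at least one index $\ell(x)$, whence
\[
\inf_x \int g_\theta(x,y)\,\theta(\rmd y)\ \geq\ \min_{1\leq\ell\leq S}\int h_{\theta,\ell}(y)\,\theta(\rmd y).
\]
Specialising this to $\theta=\theta_n^{(k-1)}$, the right-hand side converges a.s.\ to $\min_\ell \int h_{\theta_\star^{(k-1)},\ell}\,\rmd\theta_\star^{(k-1)}$: the integrand converges pointwise by continuity of $\mesobs$ and E\ref{EE:ring}\ref{ring:a}, while $\theta_n^{(k-1)}$ converges weakly to $\theta_\star^{(k-1)}$ by the inductive s-LLN (or by Proposition~\ref{prop:lgn:first}\ref{EE2:a} when $k=2$). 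The limit is strictly positive by E\ref{EE:ring}\ref{ring:c}, which produces the required deterministic $m_\star$.

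For \eqref{prop:cvg:pi}, I would first check directly that $\pi_{\theta_\star^{(k-1)}}^{(k)}=\theta_\star^{(k)}$: the acceptance ratio (\ref{eq:DefinitionAlphaAEE}) was designed so that $K_{\theta_\star^{(k-1)}}^{(k)}$ is $\pi^{\beta_k}$-reversible, $P^{(k)}$ is $\pi^{\beta_k}$-invariant by E\ref{EE3}\ref{EE3:a}, and uniqueness from Proposition~\ref{drift:small:aees}\ref{drift:small:aees:item2} does the rest. The convergence $\pi_{\theta_n^{(k-1)}}^{(k)}(f)\to\theta_\star^{(k)}(f)$ then follows from the Lipschitz-type inequality $\|\pi_\theta^{(k)}-\pi_{\theta'}^{(k)}\|_{W_k^a}\leq C\sup_x \|P_\theta^{(k)}(x,\cdot)-P_{\theta'}^{(k)}(x,\cdot)\|_{W_k^a}/W_k^a(x)$ entailed by the uniform drift/minorisation on $\Theta_{m_\star}$, combined with the pointwise kernel convergence $P_{\theta_n^{(k-1)}}^{(k)}(x,\cdot)\to P_{\theta_\star^{(k-1)}}^{(k)}(x,\cdot)$; this last convergence comes from E\ref{EE:ring}\ref{ring:a}, continuity of $\mesobs$, the weak convergence $\theta_n^{(k-1)}\Rightarrow \theta_\star^{(k-1)}$, and dominated convergence fuelled by the uniform $W_k$-integrability of $\theta_n^{(k-1)}$ inherited from the previous level.

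For items (c) and (d) I would apply the abstract theorems of \cite{fort:moulines:priouret:2010}. The hypotheses to verify are: uniform containment/drift for $\{P_\theta^{(k)}:\theta\in\Theta_{m_\star}\}$, given by \eqref{prop:subsetTheta} and Proposition~\ref{drift:small:aees}; uniform $W_k$-moments $\sup_n\PE[W_k(Y_n^{(k)})]<\infty$, obtained by iterating the drift (\ref{eq:drift}) against $\PE[W_k(Y_0^{(k)})]<\infty$ and the inductive bound on $\theta_n^{(k-1)}(W_k)$; convergence of invariant measures, which is \eqref{prop:cvg:pi}; and diminishing adaptation $\sup_x\|P_{\theta_{n+1}^{(k-1)}}^{(k)}(x,\cdot)-P_{\theta_n^{(k-1)}}^{(k)}(x,\cdot)\|\to 0$, in probability for (c) and at polynomial rate $n^{-\vit}$ with $\vit<\vitborn$ for (d). The main obstacle is this last condition: the $\theta$-dependence of $P_\theta^{(k)}$ enters both through the one-Dirac empirical update (a term of order $1/n$) and through the fluctuation of the ring boundaries (controlled by E\ref{EE:ring}\ref{ring:b}), and these variations must be pushed through the nonlinear denominators $\int g_\theta(x,z)\,\theta(\rmd z)$ appearing in both the selection weights and the acceptance ratio, uniformly in $x$; the rate afforded by E\ref{EE:ring}\ref{ring:b} is precisely what dictates the upper bound $a<(1+\vitborn)/2$ in (d).
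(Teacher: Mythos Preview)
Your induction scheme and your arguments for items \eqref{prop:subsetTheta}, (c) and (d) are essentially those of the paper. The paper also applies the abstract results of \cite{fort:moulines:priouret:2010} (Theorems~2.1 and~2.7) for (c) and (d), after reducing to the events $\bigcap_{n\geq q}\{\theta_n^{(k-1)}\in\Theta_{m_\star}\}$; one point to watch is that the uniform $W_k$-moment bound you state is only obtained in the paper on these events (Lemma~\ref{lemme:unif:moments}), not unconditionally.

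There is, however, a real gap in your treatment of \eqref{prop:cvg:pi}. The Lipschitz inequality
\[
\|\pi_\theta^{(k)}-\pi_{\theta'}^{(k)}\|_{W_k^a}\ \leq\ C\, D_{W_k^a}(\theta,\theta')
\]
is correct under uniform drift/minorisation, but its right-hand side does \emph{not} tend to zero along $\theta=\theta_n^{(k-1)}$, $\theta'=\theta_\star^{(k-1)}$. The selection kernel $K_\theta^{(k)}$ proposes a point drawn from $\tilde\theta(x,\cdot)\propto g_\theta(x,\cdot)\,\theta(\cdot)$; since $\theta_n^{(k-1)}$ is purely atomic while $\theta_\star^{(k-1)}$ is absolutely continuous, the measures $\tilde\theta_n(x,\cdot)$ and $\tilde\theta_\star(x,\cdot)$ are mutually singular for every $x$, and $\|K_{\theta_n}^{(k)}(x,\cdot)-K_{\theta_\star}^{(k)}(x,\cdot)\|_{\mathrm{TV}}$ stays bounded away from $0$. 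So $D_{W_k^a}(\theta_n^{(k-1)},\theta_\star^{(k-1)})\not\to 0$, and your route from ``pointwise kernel convergence'' to convergence of $D_{W_k^a}$ cannot be completed. (This is precisely why, for the diminishing-adaptation part of (c)--(d), the paper only compares $\theta_n^{(k-1)}$ with $\theta_{n-1}^{(k-1)}$, two empirical measures sharing $n-1$ atoms.)

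The paper circumvents this by working entirely in the weak topology: it checks the hypotheses of \cite[Theorem~2.11]{fort:moulines:priouret:2010}, namely \emph{(i)} equicontinuity of the family $\{x\mapsto P_\theta^{(k)}f(x):\theta\in\Theta_{m_\star}\}$ for each bounded continuous $f$ (Lemmas~\ref{lem:ReguX:G}--\ref{lem:Equicont}, which use the soft distance to get Lipschitz dependence on $\pi(x)$), and \emph{(ii)} for each fixed $x$, the almost-sure weak convergence $P_{\theta_n^{(k-1)}}^{(k)}f(x)\to P_{\theta_\star^{(k-1)}}^{(k)}f(x)$ (Lemma~\ref{lem:AScvgNoyaux}, which combines E\ref{EE:ring}\ref{ring:a} with the inductive s-LLN applied to the bounded continuous integrands $h_{\theta_\star,\ell}$ and $y\mapsto\alpha_{\theta_\star}(x,y)(f(y)-f(x))g_{\theta_\star}(x,y)$). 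The extension from bounded $f$ to $f\in\mathcal{L}_{W_k^a}$ is then a truncation argument using the uniform bound on $\pi_\theta^{(k)}(W_k)$ from Proposition~\ref{drift:small:aees}\ref{drift:small:aees:item2}. Replacing your Lipschitz step by this weak-convergence argument fixes the proof.
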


Observe that, for the process $\{Y^{(k)}, k \in \Nset\}$, the family  of functions for which   the law of large numbers holds depends  \textit{(i)} upon $\Gamma$ given by EE\ref{EE:ring}(\ref{ring:b}) i.e. in some sense, depends upon the adaptation rate; and \textit{(ii)} the temperature ladder.  In the case $\tau_k$ can be chosen arbitrarily close to $ \beta_{1}/\beta_k$ for any $k$ (see comments after \cite[Theorem 4.1 and 4.3]{jarner:hansen:2000}),  this family of functions only depends upon $\Gamma$ and the lowest inverse temperature : it is all the more restrictive than $\beta_1$ is small.

To our best knowledge, we are the first to prove such convergence results for
AEE (and EE): previous works
\cite{fort:moulines:priouret:2010,andrieu:jasra:doucet:delmoral:2011} consider
the simpler case when there is no selection i.e. $g_\theta(x,y)=1$.

\subsection{Comments on Assumption E\ref{EE:ring}}
\label{ring:construction} 
We propose to choose the adaptive boundaries $\bornea_{\T, \ell}$ as the
$p_\ell$-quantile of the distribution of $\mesobs(Z)$ when $Z$ is sampled under
the distribution $\T$.  This section proves that empirical
  quantiles of regularly spaced orders are examples of adaptive boundaries
  $\bornea_{\theta_n^{(k)},\ell}$ satisfying E\ref{EE:ring}. Let $F_{\T}$ be
the cumulative distribution function (cdf) of the r.v.  $\mesobs(Z)$ when $Z
\sim \T$:
\begin{align*}
  F_{\T}(x) = \int \un_{\{ \mesobs(z) \leq x \}} \T(\rmd z) \eqsp, \qquad x \in [0, \infty)\eqsp.
\end{align*}
We denote the quantile function associated to $\mesobs(Z)$ by:
\begin{align*}
  F_{\T}^{-1}(p) = \inf \{x \geq 0, F_{\T}(x) \geq p\} \quad \forall
  p>0 \eqsp; \qquad \qquad F_{\T}^{-1}(0)=0 \eqsp.
\end{align*}
With this definition, for $0 < p_1 < \cdots < p_{S-1} <1$, we set
$\bornea_{\T,\ell} \eqdef F_{\T}^{-1} \left( p_\ell \right)$.

With this choice of the boundaries, the condition
  E\ref{EE:ring}\ref{ring:c} holds: by (\ref{eq:CS:EE:ringa}),
  E\ref{EE:ring}\ref{ring:c} is satisfied because $\pi$ is continuous. The
conditions E\ref{EE:ring}\ref{ring:a}-\ref{ring:b} require the convergence of
the quantile estimators and a rate of convergence of the variation of two
successive boundaries. To prove such conditions, we use an Hoeffding-type
inequality.  
\begin{proposition} \label{prop:vitesse:quantiles}
  Assume
  \begin{enumerate}[(i)]
  \item  \label{hyp:fd}    The cumulative distribution function $F_{\T_{\star}^{(1)}}$ where
      $\theta_\star^{(1)}$ is given by (\ref{eq:ThetaStar}), is differentiable
      with positive derivative on $F_{\theta_\star^{(1)}}^{-1}((0,1))$.
    \item there exists $\overline{W}$ such that $Y^{(1)}$ is a
      $\overline{W}$-uniformly ergodic Markov chain with initial distribution
      satisfying $\PE\left[Y^{(1)}_0 \right] < \infty$.
  \end{enumerate}
 Then E\ref{EE:ring}\ref{ring:a}-\ref{ring:b} hold with $\vitborn = 1/2$ and $K=2$.
\end{proposition}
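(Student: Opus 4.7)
Since $K=2$, only the process $Y^{(1)}$ is involved in E\ref{EE:ring}\ref{ring:a}-\ref{ring:b}. The plan is to reduce both statements to a quantitative concentration bound on the empirical cdf
\begin{equation*}
F_{\theta_n^{(1)}}(x) = \frac{1}{n}\sum_{m=1}^n \un\{\pi_u(Y_m^{(1)}) \leq x\} \eqsp,
\end{equation*}
around its limit $F_{\theta_\star^{(1)}}(x)$, and to then transfer it through the quantile map $p \mapsto F_\theta^{-1}(p)$ using the local regularity of $F_{\theta_\star^{(1)}}$ at $\xi_\star \eqdef \xi_{\theta_\star^{(1)},\ell}$ provided by assumption (\ref{hyp:fd}).

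For the concentration step, under assumption (ii) and since the functions $z \mapsto \un\{\pi_u(z) \leq x\}$ are bounded by $1$, a Hoeffding-type inequality for uniformly ergodic Markov chains (Glynn--Ormoneit or Adamczak) delivers constants $C_1, C_2 > 0$ independent of $x$ such that, for every $x \in \Rset$ and every $t > 0$,
\begin{equation*}
\PP\bigl( |F_{\theta_n^{(1)}}(x) - F_{\theta_\star^{(1)}}(x)| \geq t \bigr) \leq C_1 \exp(-C_2 n t^2) \eqsp.
\end{equation*}
By assumption (\ref{hyp:fd}), there exist $c_\ell > 0$ and $\eta > 0$ such that for all $\epsilon \in (0, \eta)$, $F_{\theta_\star^{(1)}}(\xi_\star + \epsilon) \geq p_\ell + c_\ell \epsilon$ and $F_{\theta_\star^{(1)}}(\xi_\star - \epsilon) \leq p_\ell - c_\ell \epsilon$. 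Combining the standard deterministic inclusion
\begin{equation*}
\{ |\xi_{\theta_n^{(1)}, \ell} - \xi_\star| > \epsilon \} \subseteq \{ F_{\theta_n^{(1)}}(\xi_\star + \epsilon) < p_\ell \} \cup \{ F_{\theta_n^{(1)}}(\xi_\star - \epsilon) \geq p_\ell \}
\end{equation*}
with the concentration bound applied at $x = \xi_\star \pm \epsilon$ yields, for all $\epsilon \in (0, \eta)$,
\begin{equation*}
\PP( |\xi_{\theta_n^{(1)}, \ell} - \xi_\star| > \epsilon ) \leq 2 C_1 \exp(-C_2 c_\ell^2 n \epsilon^2) \eqsp.
\end{equation*}

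Condition E\ref{EE:ring}\ref{ring:a} then follows from the Borel--Cantelli lemma applied at each fixed $\epsilon > 0$. For E\ref{EE:ring}\ref{ring:b}, I take $\epsilon_n = n^{-\vit}$ with $\vit \in (0, 1/2)$; the right-hand side becomes $2 C_1 \exp(-C_2 c_\ell^2 n^{1-2\vit})$, which is summable in $n$, so Borel--Cantelli gives $|\xi_{\theta_n^{(1)}, \ell} - \xi_\star| \leq n^{-\vit}$ for all $n$ large enough w.p.1. The triangle inequality then yields
\begin{equation*}
n^{\vit}\, |\xi_{\theta_{n+1}^{(1)}, \ell} - \xi_{\theta_n^{(1)}, \ell}| \leq n^{\vit} \bigl( (n+1)^{-\vit} + n^{-\vit} \bigr) \leq 2
\end{equation*}
eventually, which establishes E\ref{EE:ring}\ref{ring:b} with $\vitborn = 1/2$.

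The main difficulty lies in securing the Hoeffding-type inequality with constants that do not depend on $x$. For $\overline{W}$-uniformly ergodic chains and uniformly bounded functions this is classical; the key point is that the class $\{\un\{\pi_u(\cdot) \leq x\}: x \in \Rset\}$ is uniformly bounded by $1$, which is exactly what delivers the uniformity in $x$, while the dependence on the initial distribution is absorbed into $C_1$ via $\PE[\overline{W}(Y_0^{(1)})] < \infty$ granted by (ii). A secondary check is that each $\xi_\star$ lies in $F_{\theta_\star^{(1)}}^{-1}((0,1))$, where $F_{\theta_\star^{(1)}}$ is differentiable with positive derivative; this is immediate from $0 < p_1 < \cdots < p_{S-1} < 1$ together with assumption (\ref{hyp:fd}).
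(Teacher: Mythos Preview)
Your proposal is correct and follows essentially the same route as the paper: a Hoeffding-type inequality for $\overline{W}$-uniformly ergodic chains applied to the indicator functions $\un\{\pi_u(\cdot)>x\}$, the standard inclusion $\{|\xi_{\theta_n^{(1)},\ell}-\xi_\star|>\epsilon\}\subseteq\{F_{\theta_n^{(1)}}(\xi_\star+\epsilon)<p_\ell\}\cup\{F_{\theta_n^{(1)}}(\xi_\star-\epsilon)\geq p_\ell\}$, local linearization of $F_{\theta_\star^{(1)}}$ at $\xi_\star$ via assumption~(\ref{hyp:fd}), and Borel--Cantelli. The only cosmetic differences are that the paper quotes the inequality in its mixed sub-Gaussian/sub-exponential form $\exp\bigl[-\tfrac{n}{K}(\delta_\epsilon^2\wedge\delta_\epsilon)\bigr]$ and chooses a single sequence $\epsilon_n=C\sqrt{\log n/n}$ (yielding $|\xi_{\theta_n^{(1)},\ell}-\xi_\star|=O(\sqrt{\log n/n})$ a.s., from which both E\ref{EE:ring}\ref{ring:a} and \ref{ring:b} follow at once), whereas you take $\epsilon_n=n^{-\gamma}$ for each $\gamma<1/2$ separately; both lead to the same conclusion.
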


The proof is in Section~\ref{secproof:prop:vitesse:quantiles}. Extensions of
Proposition~\ref{prop:vitesse:quantiles} to the case when $Y^{(1)}$ is not a
uniformly ergodic Markov chain is, to our best knowledge, an open question.
Therefore, our convergence result of AEE when the boundaries are the quantiles
defined by inversion of the cdf of the auxiliary process applies to the
$2$-stage level and seems difficult to extend to the $K$-stage, $K>2$.

We proved recently in \cite{schreck:etal:2012} that when the
  quantiles are defined by a stochastic approximation procedure, the conditions
  E\ref{EE:ring}\ref{ring:a}-\ref{ring:b} hold even under very weak conditions
  on the auxiliary $Y^{(k)}$, $k \geq 2$. In this case, the convergence of the
  $K$-level AEE with $K>2$ is established.

%%%%%%%%%%%%%%%%%%%%%%%%%%%%%%%%%%%%%%%%%%%%%%%%%%%%%%%%%%%%

\section{Application to motif sampling in biological sequences} \label{illustration}
One of the challenges in biology is to understand how gene expression is
regulated. Biologists have found that proteins called transcription factors
play a role in this regulation. Indeed, transcription factors bind on special
motifs of DNA and then attract or repulse the enzymes that are responsible of
transcription of DNA sequences into proteins. This is the reason why finding
these binding motifs is crucial. But binding motifs do not contain
deterministic start and stop codons: they are only random sequences that occurs
more frequently than expected under the background model.

Several methods have been proposed so far to retrieve binding
motifs~\cite{stormo:hartzell:1989,lawrence:reilly:1990,bailey:elkan:1994},
which yields to a complete Bayesian
model~\cite{liu:neuwald:lawrence:1995}.  Among the Bayesian approach, one
effective method is based on the Gibbs sampler~\cite{lawrence:et:al:1993} - it
has been popularized by software programs
\cite{liu:brutlag:liu:2001,roth:hughes:estep:church:1998}.  Nevertheless, as
discussed in \cite{kou:zhou:wong:2006}, it may happen that classical MCMC
algorithms are inefficient for this Bayesian approach.
Therefore, \cite{kou:zhou:wong:2006} show the interest of the Equi-Energy
sampler when applied to this Bayesian inverse problem; more recently,
\cite{woodard:rosenthal:2011} proposed a Gibbs-based algorithm for a similar
model (their model differs from the following one through the assumptions on
the background sequence).

We start with a description of our model for motif sampling in biological
sequences - this section is close to the description in
~\cite{kou:zhou:wong:2006} but is provided to make this paper self-contained.
We then apply AEE and compare it to the Interacting MCMC of \cite[Section
3]{fort:moulines:priouret:2010} (hereafter called I-MCMC), and to a
Metropolis-Hastings algorithm (MH). Comparison with Gibbs-based algorithms
(namely BioProspector and AlignACE) can be found in the paper of
\cite{kou:zhou:wong:2006}.

The available data is a DNA sequence, which is modeled by a background sequence
in which some motifs are inserted. The background sequence is represented by a
vector $\mathsf{S}=(s_1, s_2, \dots, s_L)$ of length $L$. Each element $s_i$ is
a nucleotide in $\{A,C,G,T\}$; in this paper, we will choose
the convention $s_i \in \{1,2,3,4\}$.  The length $w$ of a motif is assumed to
be known. The motif positions are collected in a vector $A=(a_1, \dots, a_L)$,
with the convention that $a_i =j$ iff the nucleotide $s_i$ is located at
position number $j$ of a motif; and $a_i =0$ iff $s_i$ is not in the motif. The
goal of the statistical analysis of the data $\mathsf{S}$ is to explore the
distribution of $A$ given the sequence $\mathsf{S}$. We now introduce notations
and assumptions on the model in order to define this conditional distribution.

We denote by $p_0$ the probability that a sub-sequence of length $w$ of
$\mathsf{S}$ is a motif. It is assumed that the background sequence is a Markov
chain with (deterministic) transition matrix $\param_0 =\{\param_0(i,j)\}_{1
  \leq i,j \leq 4}$ on $\{1, \cdots, 4 \}$; and the nucleotide in a sequence
are sampled from a multinomial distribution of parameter $\param=
\{\param(i,j)\}_{1 \leq i \leq 4, 1 \leq j \leq w}$ , $\param(i,j)$ being the
probability for the $j$-th element of a motif to be equal to $i$.

In practice, it has been observed that approximating $\param_0(i,j)$ by the
frequency of jumps from $i$ to $j$ in the (whole) sequence $\mathsf{S}$ is
satisfying. It is assumed that the r.v. $(\param, p_0)$ are independent with
prior distribution $\prod_{j=1}^w \prior(\param(\cdot,j))$ and $\prior'(p_0)$;
$\prior(\param(\cdot, j))$ is a Dirichlet distribution with parameters
$\iota_j=(\iota_{j,1}, \cdots, \iota_{j,4})$ and $\prior'(p_0)$ is a Beta
distribution with parameters $(b_1,b_2)$. $\iota_j$, $b_1$ and $b_2$ are
assumed to be known.

Therefore, given $(\param, p_0)$, $(A,\mathsf{S})$ is a Markov chain described
as follows:
\begin{itemize}  
\item[$\bullet$] If $a_{k-1} \in \{1,\dots,w-1\}$ then $a_k=a_{k-1}+1$; else
  $\PP(a_k =1 \vert a_{k-1} \in \{0, w\}, p_0, \param) = 1- \PP(a_k=0\vert a_{k-1} \in \{0, w\}, p_0, \param) =p_0$.
\item[$\bullet$] If $a_k=0$, $s_k \sim \param_0(s_{k-1},.)$; else $s_k$ is drawn from a
  Multinomial distribution with parameter $\param(\cdot,a_k)$.
\end{itemize}
The chains are initialized with $\PP(a_1 =1 \vert p_0) = 1- \PP(a_1=0\vert p_0) =p_0$; the
distribution of $s_1$ given $a_1=0$ and $\param$ (resp. given $a_1=1$ and $\param$) is uniform on
$\{1,\cdots, 4\}$ (resp. a Multinomial distribution with parameter $\param(\cdot, 1)$).

This description yields to the following conditional distribution of $A$ given
$\mathsf{S}$: (up to a multiplicative constant) - see
\cite{kou:zhou:wong:2006} for similar derivation -
\begin{align*}
  \mathrm{P}(A|S) &\propto  \frac{\Gamma(N_1(A)+b_1) \Gamma(N_0(A)+b_2)}{\Gamma(N_1(A)+N_0(A)+b_1+b_2)} \ \prod_{i=1}^w \frac{\prod_{j=1}^4 \Gamma(c_{j,i}(A)+\iota_{j,i})}{\Gamma(\sum_{\ell=1}^4 c_{\ell,i}(A) + \iota_{\ell,i})}  \cdots \\
  & \ \times \prod_{k=2}^L (\delta_{a_{k-1}+1}(a_k))^{\mathbf{1}_{a_{k-1} \in
      \{1, \dots,w-1\}}} \prod_{k=2}^L
  \left(\param_0(s_{k-1},s_k)\right)^{\un_{a_k=0}} \left(\un_{\{0\}}(a_1)
    \frac{1}{4} + \un_{\{1\}}(a_1) \right)
\end{align*}
where
\begin{itemize} 
\item[$\bullet$] $N_1(A)=\#\{k, a_k=1\}$ is the number of elements of $A$ equal to $1$.
 \item[$\bullet$] $N_0(A)=\#\{k, a_k=0\}$ is the number of elements of $A$ equal to $0$.
 \item[$\bullet$] $c_{j,i}(A)=\sum_{k=1}^L \mathbf{1}_{a_k=i} \mathbf{1}_{s_k=j}$ is the
   number of pairs $(a_k,s_k)$ equal to $(i,j)$.
\end{itemize}

\begin{figure}[ht!]
\begin{center}
  \includegraphics[height=6cm,width=12cm]{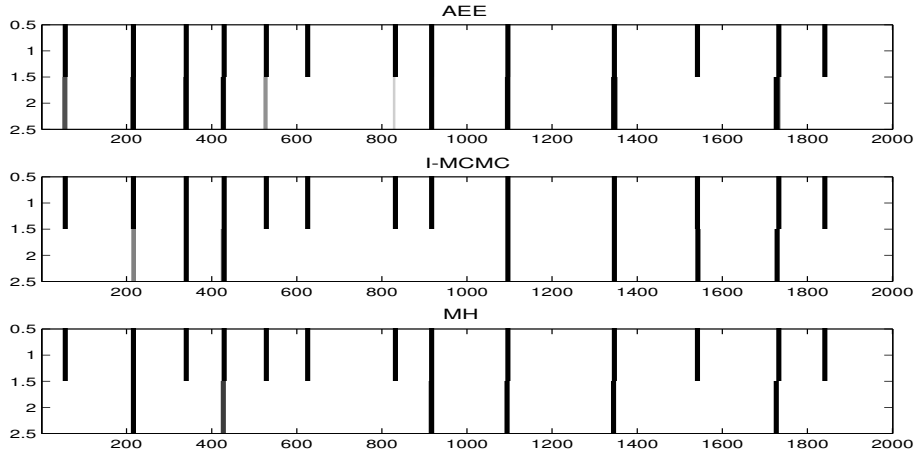}
\end{center}
\caption{Results given by  AEE, I-MCMC and a MH sampler}
\label{comparaison_bio}
\end{figure}

To highlight the major role of the equi-energy jumps, and the
  importance of the construction of the rings to make the acceptance
  probability of the jumps large enough, we compare AEE to I-MCMC, and to MH.
  The data are obtained with values of $p_0, \param_0$ and $\param$ similar to
  those of~\cite{kou:zhou:wong:2006}: $p_0=0.005$, $b_1 = 2 $, $b_2 = 200$, $\iota_{j,i}=1$ for all $j$, $i$, and
\begin{displaymath}
\param_0 = \left( \begin{array}{cccc}
0.1 & 0.7 & 0.1 & 0.1\\  
0.1 & 0.1 & 0.7 & 0.1\\
0.1 & 0.1 & 0.1 & 0.7\\
0.7 & 0.1 & 0.1 & 0.1\end{array} \right) , \,
 \param = \left( \begin{array}{cccccccccccc}
0.5 & 0.6 & 0.2 & 0.4 & 0.1 & 0.3 & 0.6 & 0.1 & 0.4 & 0.4 & 0.3 & 0 \\
0  & 0.2 & 0 & 0.2 & 0.8 & 0.7 & 0 & 0.9 & 0 & 0 & 0.2 & 0.3\\
0  & 0 & 0.8 & 0 & 0 & 0 & 0 & 0 & 0.3 & 0.5 & 0.4 & 0.1\\
0.5 & 0.2 & 0 & 0.4 & 0.1 & 0 & 0.4 & 0 & 0.3 & 0.1 & 0.1 & 0.6  \end{array} \right) .
\end{displaymath}
We sample a sequence $\mathsf{S}$ of length $L=2000$ and the size of the motif
is $w= 12$.

We now detail how the MH and the Metropolis-Hastings steps of AEE and I-MCMC
are run.  For the Metropolis-Hastings stage, the proposal distribution $p(A_n,
\tilde A_{n+1})$ is of the form 
\[
p(A_n, \tilde A_{n+1}) = q_0(\tilde a_1^{n+1}) \ \prod_{j=1}^{L-1} q_j(\tilde
a_{j}^{n+1},\tilde a_{j+1}^{n+1}; A_n) \eqsp,
\]
where we set $ \tilde A_{n+1} = (\tilde a_1^{n+1}, \cdots, \tilde a_L^{n+1})$.
The proposed state $\tilde{A}_{n+1}$ of the Metropolis-Hastings step is then
sampled element by element; the distributions are designed to be close to the
previous model: $\tilde{a}_{j+1}^{n+1}$ equal to $\tilde{a}_{j}^{n+1}+1$ if
$\tilde{a}_{j}^{n+1} \in \{1,\dots,w-1\}$, and else, $\tilde{a}_{j+1}^{n+1}$ is
sampled under a Bernoulli distribution of parameter 
\begin{equation}
\label{eq:proposal:bio}
  \frac{\hat{p}_0 \prod_{i=1}^{w} \hat{\param}_{A_n}(s_{j+i-1},i)}{\hat{p}_0
    \prod_{i=1}^{w} \hat{\param}_{A_n}(s_{j+i-1},i)+(1-\hat{p}_0)
    \prod_{i=1}^{w-1} \param_0(s_{j+i},s_{j+i+1})} \eqsp;
 \end{equation} the replacement constant $\hat{p}_0$ is fixed by the users
 and $\hat \param_{A_n}$ is given by $\hat{\param}_{A_n}(s,i) \propto
 c_{s,i}(A_n)+c$ - where $c$ is a value fixed by the users.  $q_0(\tilde
 a_1^{n+1})$ is the Bernoulli distribution with parameter
 (\ref{eq:proposal:bio}). Finally, the candidate $\tilde{A}_{n+1}$ is accepted
 with probability
\[
1 \wedge \frac{\mathrm{P}(\tilde A_{n+1}|S)^{1/T_k}}{\mathrm{P}(A_{n}|S)^{1/T_k}} \frac{p(\tilde{A}_{n+1}, A_n
  )}{p(A_n, \tilde{A}_{n+1} ) } \eqsp.
\]

Figure~\ref{comparaison_bio} displays the results obtained by AEE, I-MCMC and a
MH sampler.   Each subplot displays two horizontal lines with
  length equal to the length of the observed DNA sequence. The upper line
  represents the actual localization of the motifs, and the lower line
  represents in gray-scale the probability for each position to be part of a
  motif computed by one run of each algorithm after $2000$ iterations. For AEE
and I-MCMC, we choose $\InterractionProba=0.1$, $K=5$, $S=3$.  The acceptance
rate of the jump for AEE was about five times higher than for I-MCMC, which
confirms the interest of the rings. As expected, AEE performs better than the
other algorithms: there were $13$ actual motifs, and AEE retrieved $10$ motifs,
whereas the I-MCMC and the MH retrieved respectively $7$ and $6$ motifs.

\section{Conclusion}

As illustrated by the numerical examples, the efficiency of EE
  depends upon the choice of the energy rings. The adaptation we proposed
  improves this efficiency since it makes the probability of accepting a jump
  more stable.  It is known that adaptation can destroy the convergence of the
  samplers: we proved that AEE converges under quite general conditions on the
  adapted bounds and these general conditions can be used to prove the
  convergence of AEE when applied with other adaptation
  strategies~\cite{schreck:etal:2012}. It is also the first convergence result
  for an interacting MCMC algorithm including a selection mechanism.  Our
  sketch of proof can be a basis for the proof of other interacting MCMC such
  as the SIMCMC algorithm of~\cite{brockwell:delmoral:doucet:2010}, the
  Non-Linear MCMC algorithms described in~\cite[Section
  3]{andrieu:jasra:doucet:delmoral:2011} or the PTEEM algorithm
  of~\cite{bargatti:grimaud:pommeret:2012}.

%%%%%%%%%%%%%%%%%%%%%%%%%%%%%%%%%%%%%%%%%%%%%%%%%%%%%%%%%%%%%%%

\appendix

\section{Results on the transition kernels $P_\theta^{(k)}$}
Define
\begin{equation}
  \label{eq:ThetaTilde}
 G_\theta(x) \eqdef \int g_\theta(x,z) \theta(\rmd z) \eqsp, \qquad  \tilde \theta(x,\rmd y) \eqdef \frac{g_\theta(x,y) \theta(\rmd y)}{G_\theta(x)} \eqsp.
\end{equation}

\subsection{Proof of Proposition~~\ref{drift:small:aees}}
\label{secproof:drift:small:aees}
The case $k=1$ is a consequence of E\ref{EE3} since $P_\T^{(1)} = P^{(1)}$ for
any $\T$ so that $\pi_\T^{(1)} \propto \pi^{\beta_1}$. We now consider the case
$k \in \{2, \cdots, K\}$: in the proof below, for ease of notations we will
write $P$, $P_\T$, $W$, $\lambda, b$ and $\pi_\T$ instead of $P^{(k)}$,
$P_\T^{(k)}$, $W_k$, $\lambda_k, b_k$ and $\pi_\T^{(k)}$.

\textit{(a)} Let $m \geq 1$ and $\theta \in \tetaset_m$.  By definition of
$g_\theta$ (see (\ref{eq:FonctionSelectionEE})) and of $\tetaset_m$
(see~(\ref{eq:ThetaM})), $1/m \leq \int g_{\theta}(x,y) \theta(\rmd y) \leq S$.
Moreover, by E\ref{EE3}\ref{EE3:b}
\[
P_{\theta}W(x) = (1-\InterractionProba) PW(x) + \InterractionProba K_{\theta}W(x) \leq
(1-\InterractionProba) (\lambda W(x) + b )+ \InterractionProba K_{\theta}W(x) \eqsp.
\]
We have by (\ref{eq:DefinitionKthetaAEE}), (\ref{eq:DefinitionW}) and
(\ref{eq:ThetaTilde})
\begin{align*}
  K_\theta W(x) &=W(x)+ \int W(y) \alpha_{\theta}(x,y) \left(
    1-\frac{\pi^{\tau_k \beta_k}(y)}{\pi^{\tau_k \beta_k}(x)}\right) \tilde{\theta}
  (x,\rmd y) \eqsp.
\end{align*}
By (\ref{eq:DefinitionAlphaAEE}),
\[
K_\theta W(x) \leq W(x) + m \int_{\{y, \pi(y) \leq \pi(x) \}} W(y)
\frac{\pi^{\beta_{k} -\beta_{k-1}}(y)}{\pi^{\beta_{k} -\beta_{k-1}}(x)} \left(
  1-\frac{\pi^{\tau_k \beta_k}(y)}{\pi^{\tau_k \beta_k}(x)}\right)
g_\theta(x,y) {\theta} (\rmd y) \eqsp.
\]

Defining $\psi$ by $\psi(\sigma) = \sigma/(\sigma+1)^{(\sigma+1)/\sigma}$ gives
the upper bound $\sup_{z\in[0,1]}z(1-z^{\sigma}) \leq \psi(\sigma) $. Hence,
$K_\theta W(x) \leq W(x) + S m \ \psi\left( \tau_k \beta_k /(\beta_{k} -
  \beta_{k-1}) \right) {\theta}(W)$.  This yields $P_\theta W(x) \leq \tilde
\lambda W(x) + \tilde b m \theta(W)$ with $\tilde{\lambda} =
(1-\InterractionProba)\lambda+\InterractionProba<1$ and $\tilde b =
\InterractionProba S \ \psi \left(\tau_k \beta_k /(\beta_{k} - \beta_{k-1})
\right) + (1-\InterractionProba) b$.  The minorization condition comes from the
lower bound $ P_{\theta}(x,A) \geq (1-\InterractionProba) P(x,A)$.

\textit{(b)} Let $m \geq 1$ and $\theta \in \tetaset_m$.  By
E\ref{EE3}\ref{EE3:a}, $P$ is  $\varphi$-irreducible and so is $P_\theta$;
$P_\theta$ possesses a $1$-small set and is thus aperiodic. In addition, $
P_{\T}W \leq ( 1 + \tilde \lambda) W/ 2 + \tilde{b} \T(W) \un_{\{W
  \leq c \}}$, with $c\eqdef 2 \tilde{b} m \ \T(W) (1-\tilde{\lambda})^{-1}$
and $\{W \leq c \}$ is a $1$-small set for $P_\theta$. By \cite[Chapter
15]{meyn:tweedie:1993}, $\pi_\theta$ exists and $\pi_\theta(W) \leq \tilde b m
\theta(W) (1-\tilde \lambda)^{-1}$.

\subsection{Ergodic behavior}
\begin{lemma} \label{lemme:unif:ergo}
  Assume E\ref{EE1}\ref{EE1:a} and E\ref{EE3}.  Then for all $a \in (0,1)$, for
  all $m \geq 1$ and all $\theta \in \tetaset_{m}$, there exist $C_{\theta}$
  and $\rho_{\theta} \in (0,1)$ such that for all $x \in \mathbf{X}$ and any $j
  \geq 1$ and any $k \in \{1, \cdots, K\}$,
   \begin{align} \label{control:normev}
     \|\left(P_{\T}^{(k)}\right)^j(x,.)-\pi_{\T}^{(k)}\|_{W_{k}^a} \leq C_{\T} \ \rho_{\T}^j \ W^a_k(x) \eqsp.
   \end{align}
   Let $k \in \{1, \cdots, K-1 \}$ and assume in addition that $\lim_{n \to
     \infty} \theta_n^{(k)}(W_k) = \T_\star^{(k)}(W_k)$ w.p.1. Then for any
   positive integer $q$, on the set $\bigcap_{n \geq q} \{\theta_n \in
   \tetaset_{m_\star} \}$
   \begin{align}
     \limsup_n \rho_{\theta_n^{(k)}}<1, \limsup_n C_{\theta_n^{(k)}} < +\infty
     , \PP-\text{a.s.  } \eqsp. \label{lemme:unif:ergo:tool1}
   \end{align}
\end{lemma}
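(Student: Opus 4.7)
The plan is to reduce everything to a quantitative form of Meyn-Tweedie geometric ergodicity, applied with drift function $W_k^a$ rather than $W_k$, so that the constants can be read off from the constants appearing in Proposition~\ref{drift:small:aees}. The first step is to upgrade the drift inequality of Proposition~\ref{drift:small:aees}(a) from $W_k$ to $W_k^a$. Since $x \mapsto x^a$ is concave on $[1,\infty)$, Jensen's inequality gives $P_\theta^{(k)}(W_k^a) \leq (P_\theta^{(k)} W_k)^a$, and by the subadditivity of $x \mapsto x^a$ on $[0,\infty)$,
\begin{equation*}
P_\theta^{(k)} W_k^a \leq (\tilde\lambda_k W_k + \tilde b_k m \theta(W_k))^a \leq \tilde\lambda_k^a W_k^a + (\tilde b_k m \theta(W_k))^a
\end{equation*}
for every $\theta \in \tetaset_m$. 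Next, observe that for any $d \geq 1$ the sublevel set $\{W_k^a \leq d\} = \{W_k \leq d^{1/a}\}$ is of the form $\{\pi \geq p\}$ (by definition of $W_k$), hence by Proposition~\ref{drift:small:aees}(a) it is $1$-small for $P_\theta^{(k)}$ with minorization constants that do not depend on $\theta$ or $m$. So we have a drift-plus-minorization pair in which the minorization constants are uniform, and the drift constants are $\tilde\lambda_k^a \in (0,1)$ (universal) and $B_\theta \eqdef (\tilde b_k m\, \theta(W_k))^a$ (depending on $\theta$ only through the product $m\, \theta(W_k)$).

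I would then invoke a quantitative geometric ergodicity theorem, e.g.\ Baxendale's sharp bounds or the Meyn-Tweedie/Roberts-Rosenthal style result in $V$-norm: whenever $PV \leq \lambda V + b \un_C$ with $C$ a $(1,\varepsilon,\nu)$-small set and $\sup_C V < \infty$, there exist explicit $C = C(\lambda,b,\varepsilon) < \infty$ and $\rho = \rho(\lambda,b,\varepsilon) \in (0,1)$ such that $\|P^j(x,\cdot) - \pi\|_V \leq C \rho^j V(x)$, and $C,\rho$ depend on the triple $(\lambda,b,\varepsilon)$ in a locally bounded way (bounded above and bounded away from $1$ whenever $\lambda$ stays bounded away from $1$, $b$ stays bounded, and $\varepsilon$ stays bounded away from $0$). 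Applied to $V = W_k^a$ and to the kernel $P_\theta^{(k)}$ one obtains precisely the bound
\begin{equation*}
\|(P_\theta^{(k)})^j(x,\cdot) - \pi_\theta^{(k)}\|_{W_k^a} \leq C_\theta \rho_\theta^j W_k^a(x),
\end{equation*}
with $C_\theta, \rho_\theta$ functions of $\tilde\lambda_k^a$, $B_\theta$, and the universal small-set constants; this proves \eqref{control:normev}.

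For the second assertion \eqref{lemme:unif:ergo:tool1}, on the event $\bigcap_{n \geq q} \{\theta_n^{(k)} \in \tetaset_{m_\star}\}$ we have $m = m_\star$ fixed, so $B_{\theta_n^{(k)}} = (\tilde b_k m_\star \theta_n^{(k)}(W_k))^a$. By assumption $\theta_n^{(k)}(W_k) \to \theta_\star^{(k)}(W_k) < \infty$ almost surely (the finiteness comes from E\ref{EE3}(b) and E\ref{EE1}(a) as remarked after that assumption), so $\sup_n B_{\theta_n^{(k)}} < \infty$ a.s. Since the minorization triple is fixed and $\tilde\lambda_k^a$ is fixed, the locally bounded dependence of $(C,\rho)$ on the drift-minorization data gives $\limsup_n C_{\theta_n^{(k)}} < \infty$ and $\limsup_n \rho_{\theta_n^{(k)}} < 1$ a.s.

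The only mildly subtle point is locating a statement of the quantitative ergodicity result in exactly the form needed, namely with $(C,\rho)$ explicitly monotone in $(\lambda, b, \varepsilon)$ so that uniform bounds on these constants translate into uniform $(C_\theta, \rho_\theta)$; this is where I would cite Baxendale (2005) or Roberts-Rosenthal quantitative bounds, rather than reprove the estimate. Everything else is bookkeeping: the passage to $W_k^a$ via Jensen, and the identification of the small sets of $W_k^a$ with those of $W_k$.
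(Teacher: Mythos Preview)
Your proof is correct and follows the same route as the paper's: Jensen to pass from $W_k$ to $W_k^a$, then a quantitative drift-plus-minorization ergodicity bound whose constants are monotone in the drift/minorization data (the paper invokes \cite[Lemma~2.3]{fort:moulines:priouret:2010} in place of your Baxendale/Roberts--Rosenthal citation, but the content is the same). The only slip is the phrase ``the minorization triple is fixed'' in your treatment of \eqref{lemme:unif:ergo:tool1}: the small set on which minorization is required is $\{W_k^a \leq d_\theta\}$ with $d_\theta$ proportional to $B_\theta$, so the relevant minorizing constant $\delta_\theta$ does vary with $\theta$; the paper handles this by noting that $\sup_n \theta_n^{(k)}(W_k) < \infty$ a.s.\ keeps the required level bounded, whence $\limsup_n \delta_{\theta_n}^{-1} < \infty$ and the conclusion follows.
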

\begin{proof} The proof in the case $k=1$ is a
  consequence of E\ref{EE3} and \cite[Chapter 15]{meyn:tweedie:1993} since
    $P_\T^{(1)} = P^{(1)}$. Consider the case $k \geq 2$.  Here again, the
    dependence upon $k$ is omitted: $P_\T, W, \T_n$ denote $P_\T^{(k)}, W_k$
    and $\T_n^{(k)}$.
  
  {\em Proof of (\ref{control:normev})} Let $a \in (0,1)$ and set $V=W^a$. By
  the Jensen's inequality and Proposition~\ref{drift:small:aees}, there exists
  $\bar \lambda \in (0,1)$ and $\bar b$ such that for any $m\geq 1$ and any
  $\theta \in \tetaset_m$,
\[
P_\theta V \leq \bar \lambda V + \bar b  \, m \,  \theta(W)^a \eqsp.
\]
Let $m \geq 1$ and $\theta \in \tetaset_m$. By \cite[Lemma
2.3.]{fort:moulines:priouret:2010}, (\ref{control:normev}) holds and there
exist constants $C, \gamma >0$ such that for any $\theta \in \tetaset_m$,
\[
C_\theta \vee (1-\rho_\theta)^{-1} \leq C \left( \bar{b} \ m \theta(W) \vee
  \delta_\theta^{-1} \vee (1-\bar{\lambda})^{-1} \right)^\gamma \eqsp,
\]
where $\delta_\theta$ is the minorizing constant of $P_\theta$ on the set $\{x:
W(x) \leq 2 \bar b m \, \theta(W) \, (1-\bar \lambda)^{-1} -1 \}$. 

{\em Proof of (\ref{lemme:unif:ergo:tool1})} For all $\omega \in \bigcap_{n
  \geq q} \{\T_n \in \tetaset_{m_\star} \}$,  
\[
\limsup_n \{ C_{\theta_n(\omega)} \vee (1-\rho_{\theta_n(\omega)})^{-1} \} \leq
C \left( \bar{b} \, m \, \limsup_n \theta_n(W) \vee \limsup_n
  \delta_{\theta_n(\omega)}^{-1} \vee (1-\bar{\lambda})^{-1} \right)^\gamma
\eqsp.
\]
Since $\limsup_n \theta_n(W) = \theta_\star(W) < \infty$ w.p.1, $\limsup_n
\delta_{\theta_n(\omega)}^{-1} < \infty$ w.p.1. thus showing that on the set
$\bigcap_{n \geq q} \{\T_n \in \tetaset_{m_\star} \}$, $\limsup_n \{
C_{\theta_n(\omega)} \vee (1-\rho_{\theta_n(\omega)})^{-1} \} < \infty$. This
implies (\ref{lemme:unif:ergo:tool1}).
\end{proof}

\subsection{Moment conditions}
\label{app:lemme:unif:moments}
Let $m_\star >0$. Define for any positive integer $q$ and any $ k \in \{1,
\cdots, K-1\}$,
\[
A_{q,n}^{(k)} = \bigcap_{\ell\leq k} \bigcap_{q \leq j\leq n} \left\{
  \theta_j^{(\ell)} \in \Theta_{m_\star} \right\} \quad \text{if $q \leq n$,}
\quad \textrm{and} \quad A_{q,n}^{(k)}= \Omega \quad \text{otherwise;}
\]
by convention, $A_{q,n}^{(0)} = \Omega$ for any $q,n \geq 0$.

\begin{lemma} \label{lemme:unif:moments}
  Assume E\ref{EE1}\ref{EE1:a}, E\ref{EE3} and
  $\PE\left[W_k(Y_0^{(k)})\right]< \infty$ for any $ k\in \{1, \cdots, K\}$.
  Then for any $k \in \{1, \cdots, K \}$,
\begin{equation}
  \label{lemme:unif:ergo:tool2}
 \sup_{j\geq 1}  \PE\left[W_k(Y_j^{(k)})  \un_{A_{q,j-1}^{(k-1)}}  \right] < \infty \eqsp.
 \end{equation}

\end{lemma}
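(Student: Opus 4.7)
The statement is proved by induction on $k \in \{1, \ldots, K\}$. For the base case $k=1$, the convention $A_{q,j-1}^{(0)} = \Omega$ reduces the claim to $\sup_{j \geq 1} \PE[W_1(Y_j^{(1)})] < \infty$, which is exactly Proposition~\ref{prop:lgn:first}\eqref{EE2:b}.

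For the inductive step at $k \geq 2$, assume the result at level $k-1$ and set $u_j \eqdef \PE[W_k(Y_j^{(k)}) \un_{A_{q,j-1}^{(k-1)}}]$. The event $A_{q,j-1}^{(k-1)}$ belongs to $\mathcal{F}_{j-1}^{(k-1)} \subseteq \mathcal{F}_{j-1}^{(k)}$, and by~\eqref{eq:interactingMCMC} the conditional law of $Y_j^{(k)}$ given $\mathcal{F}_{j-1}^{(k)}$ is $P_{\theta_{j-1}^{(k-1)}}^{(k)}(Y_{j-1}^{(k)}, \cdot)$. For $j \geq q+1$, on $A_{q,j-1}^{(k-1)}$ one has $\theta_{j-1}^{(k-1)} \in \tetaset_{m_\star}$, so Proposition~\ref{drift:small:aees}(a) applies with $m = m_\star$ and the tower property yields
\[
u_j \leq \tilde \lambda_k \, \PE\!\left[W_k(Y_{j-1}^{(k)}) \un_{A_{q,j-1}^{(k-1)}}\right] + \tilde b_k m_\star \, \PE\!\left[\theta_{j-1}^{(k-1)}(W_k) \un_{A_{q,j-1}^{(k-1)}}\right].
\]
The nesting $A_{q,j-1}^{(k-1)} \subseteq A_{q,j-2}^{(k-1)}$ bounds the first summand by $\tilde \lambda_k u_{j-1}$.

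To control the averaged term, I expand $\theta_{j-1}^{(k-1)}(W_k) = (j-1)^{-1} \sum_{m=1}^{j-1} W_k(Y_m^{(k-1)})$ and exploit the key inclusion $A_{q,j-1}^{(k-1)} \subseteq A_{q,m-1}^{(k-2)}$ for every $m \leq j-1$ (both the level index and the upper time index decrease). Combined with the embedding $W_k \in \mathcal{L}_{W_{k-1}}$ from the remark following E\ref{EE3}, and with the induction hypothesis at level $k-1$, this gives
\[
\PE\!\left[\theta_{j-1}^{(k-1)}(W_k) \un_{A_{q,j-1}^{(k-1)}}\right] \leq |W_k|_{W_{k-1}} \sup_{m \geq 1} \PE\!\left[W_{k-1}(Y_m^{(k-1)}) \un_{A_{q,m-1}^{(k-2)}}\right] \eqdef C_{k-1} < \infty.
\]
Hence $u_j \leq \tilde \lambda_k u_{j-1} + \tilde b_k m_\star C_{k-1}$ for all $j \geq q+1$, with $\tilde \lambda_k \in (0,1)$. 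Since $u_0 = \PE[W_k(Y_0^{(k)})] < \infty$ by assumption and the finitely many initial values $u_1, \ldots, u_q$ are finite (each obtained by iterating Proposition~\ref{drift:small:aees}(a) a bounded number of times), iteration of the recursion yields $\sup_{j \geq 1} u_j < \infty$.

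The main obstacle I anticipate is the two-index bookkeeping of the nested events $A_{q,n}^{(k)}$: one must carefully verify the two inclusions $A_{q,j-1}^{(k-1)} \subseteq A_{q,j-2}^{(k-1)}$ (time monotonicity) and $A_{q,j-1}^{(k-1)} \subseteq A_{q,m-1}^{(k-2)}$ for $m \leq j-1$ (joint decrease in level and time), so as to couple the $\theta$-uniform drift of Proposition~\ref{drift:small:aees} at level $k$ with the induction hypothesis that only governs moments at level $k-1$. The bridge between the two levels is provided by $W_k \in \mathcal{L}_{W_{k-1}}$, which is indispensable because the previous-level induction supplies uniform control of $W_{k-1}$-moments of the auxiliary process $Y^{(k-1)}$, not of $W_k$-moments directly.
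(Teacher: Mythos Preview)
Your proof is correct and follows essentially the same route as the paper's: induction on $k$, the drift from Proposition~\ref{drift:small:aees}(a) applied on the event $A_{q,j-1}^{(k-1)}$, the two nestings $A_{q,j-1}^{(k-1)} \subseteq A_{q,j-2}^{(k-1)}$ and $A_{q,j-1}^{(k-1)} \subseteq A_{q,m-1}^{(k-2)}$, and the bridge $W_k \in \mathcal{L}_{W_{k-1}}$ to invoke the induction hypothesis at level $k-1$.

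One small imprecision (which the paper also glosses over): for $j \leq q$ you cannot literally ``iterate Proposition~\ref{drift:small:aees}(a)'', because that proposition requires $\theta \in \Theta_m$ for a fixed $m$, whereas $\theta_{j-1}^{(k-1)}$ need not lie in $\Theta_{m_\star}$ before time $q$. The paper's argument here is that either $\theta_{j-1}^{(k-1)} \notin \bigcup_m \Theta_m$, in which case $P_{\theta_{j-1}}^{(k)} = P^{(k)}$ and the drift E\ref{EE3}\ref{EE3:b} applies directly, or else $\theta_{j-1}^{(k-1)} \in \Theta_{m}$ for some (random, $j$-dependent) $m$; in either case a drift holds, and since only finitely many steps are involved and $\PE[W_k(Y_m^{(k-1)})] < \infty$ for $m \leq q$ by the induction hypothesis (noting $A_{q,m-1}^{(k-2)} = \Omega$ for such $m$), finiteness of $u_1,\dots,u_q$ follows.
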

\begin{proof} The proof is by induction on $k$. The case $k=1$ is a consequence of E\ref{EE3} since $P_\T^{(1)} =
  P^{(1)}$. Assume the property holds for $k \in \{2, \cdots, K-1\}$. In this
  proof, $W_{k+1}, P_\T^{(k+1)}, \T_n^{(k)}, Y^{(k)}, Y^{(k+1)}, P^{(k+1)}$,
  $K_\T^{(k+1)}$ will be denoted by $W, P_\T, \T_n, Y, X, P, K_\T$.
  
  By (\ref{eq:interactingMCMC}) and Proposition~\ref{drift:small:aees} we
  obtain, for $j > q$
\begin{align*}
  \PE\left[ W(X_j) \un_{A_{q,j-1}^{(k)}} \right] & \leq \PE\left[
    P_{\theta_{j-1}} W(X_{j-1}) \un_{A_{q,j-1}^{(k)}}\right]
  \\
  &\leq \tilde{\lambda} \PE\left[ W(X_{j-1}) \un_{A_{q,j-2}^{(k)} }\right]
  +\tilde{b} \, m_\star
  \, \PE\left[\theta_{j-1}(W)  \un_{A_{q,j-1}^{(k-1)} }  \right] \\
  & \leq \tilde{\lambda} \PE\left[ W(X_{j-1}) \un_{A_{q,j-2}^{(k)} }\right]
  +\tilde{b} \, m_\star \, \sup_l \PE\left[W(Y_l) \un_{A_{q,l-1}^{(k-1)} }
  \right] \eqsp.
\end{align*}
Since $W_{k+1} \in \mathcal{L}_{W_k}$, the induction assumption implies that $
\sup_l \PE\left[W(Y_l) \un_{A_{q,l-1}^{(k-1)} } \right]< \infty$. Iterating
this inequality allows to write that for some constant $C'$
\[
\sup_{j \geq q} \PE\left[ W(X_j) \un_{A_{q,j-1}^{(k)}} \right] \leq C' \ 
\PE\left[ W(X_q) \right] \eqsp.
\]
Finally, by definition of $P_{\T_j}$, either $P_{\T_j} = P$ if $ \T_j \notin
\bigcup_m \Theta_m$, or $P_{\T_j} = (1-\epsilon) P + \epsilon K_{\T_j}$
otherwise; note that if $\T_j \in \bigcup_m \Theta_m$ then $\T_j \in
\Theta_{1/j}$. Since both $P$ and $P_\T$ for $\T \in \bigcup_m \Theta_m$
satisfy a drift inequality (see E\ref{EE3} and Proposition
\ref{drift:small:aees}), $\PE\left[ W(X_q) \right] < \infty$ by
(\ref{eq:interactingMCMC}).
\end{proof}

\section{Proof of Theorem~\ref{theo:tout}} \label{proof}
  \begin{hypA}\hspace{-0.1cm}$(k)$
    \label{hyp:proof:subset} There exists $m_\star >0$ such that $\PP\left(\bigcup_{q \geq 1} \bigcap_{n \geq q} \{ \T_n^{(k)} \in \Theta_{m_\star} \} \right)=1$.
  \end{hypA}
 \begin{hypA}\hspace{-0.1cm}$(k)$
    \label{hyp:proof:pi} for any $a \in (0,1)$ and any continuous function $f \in \mathcal{L}_{W_k^a}$,
\[
\lim_{n \to \infty} \pi_{\T_n^{(k-1)}}^{(k)}(f) = \T_\star^{(k)}(f) \eqsp.
\]
  \end{hypA}
\begin{hypA}\hspace{-0.1cm}$(k)$\label{hyp:proof:ergo} For all bounded continuous function $f$, $\lim_{n \to \infty} \PE\left[ f(Y_n^{(k)})\right] = \T_\star^{(1)}(f)$.
 \end{hypA}
\begin{hypA}\hspace{-0.1cm}$(k)$ \label{hyp:proof:lln} $\theta_{\star}^{(k)}(W_{k+1}) < +\infty$, and for any $a \in (0, \frac{1 + \Gamma}{2} \wedge 1)$ and any
  continuous function $f$ in $\mathcal{L}_{W_k^a}$, $\theta_n^{(k)}(f)
  \rightarrow \theta_{\star}^{(k)}(f)$ a.s.
\end{hypA}

By Proposition~\ref{prop:lgn:first}, the conditions R\ref{hyp:proof:ergo} and
R\ref{hyp:proof:lln} hold for $k=1$; R\ref{hyp:proof:pi} also holds for $k=1$
since $\pi_\T^{(1)} = \T_\star^{(1)}$ for any $\T$.  We assume that for any $j
\leq k$, for $k \in \{1, \cdots, K-1 \}$, the conditions
R\ref{hyp:proof:subset}($j-1$), R\ref{hyp:proof:pi}($j$),
R\ref{hyp:proof:ergo}($j$) and R\ref{hyp:proof:lln}($j$) hold. We prove that
R\ref{hyp:proof:subset}($k$), R\ref{hyp:proof:pi}($k+1$),
R\ref{hyp:proof:ergo}($k+1$) and R\ref{hyp:proof:lln}($k+1$) hold. To make the
notations easier, the superscript $k$ is dropped from the notations: the
auxiliary process $Y^{(k)}$ will be denoted by $Y$, and the process $Y^{(k+1)}$
by $X$; $P^{(k+1)}, W_{k+1}, K_\T^{(k+1)}, P_\T^{(k+1)}, \alpha_\T^{(k+1)},
\pi_{\T}^{(k+1)}$ and $\T_n^{(k)},\T_\star^{(k)}$ are resp. denoted by $P, W,
K_\T, P_\T,\alpha_\T, \pi_{\T}$ and $ \T_n, \T_\star$.

Finally, we define the V-variation of the two kernels $P_\theta$ and $P_{\theta'}$
by:
\begin{displaymath}
  D_V(\theta,\theta')=\sup_{x\in \mathbf{X}} \left( \frac{\|P_{\theta}(x,.)-P_{\theta'}(x,.) \|_V}{V(x)}\right) \eqsp.
\end{displaymath}
When $V=1$, we will simply write $D$.

\subsection{Proof of R\ref{hyp:proof:subset}($k$)}
\label{secproof:prop:subsetTheta}
The proof is prefaced with a preliminary lemma.
\begin{lemma} \label{lemme:conv:h} 
    For all $l \in \{1, \cdots, S-1 \}$ and any $\theta, \theta'$,
\[
\sup_{x \in \mathbf{X}} \left|h_{\theta,l}(x) - h_{\theta',l}(x) \right| \leq
\frac{1}{r} \sup_{l \in \{1,\cdots, S-1 \}}\left| \bornea_{\theta,l} -
  \bornea_{\theta',l} \right| \eqsp.
\]
\end{lemma}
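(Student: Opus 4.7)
My approach is to reduce the claim to controlling the distance-to-set function $d(\pi_u(x), H_{\theta,\ell})$ and then to exploit the specific (soft) form of $d$ used throughout the convergence section, namely
\[
d(\pi_u(x), H_{\theta,\ell}) = \frac{1}{r} \min_{y \in H_{\theta,\ell}} |\pi_u(x) - y| \eqsp.
\]
Since the map $t \mapsto (1-t)_+$ is $1$-Lipschitz on $\Rset$, the first step is simply
\[
|h_{\theta,\ell}(x) - h_{\theta',\ell}(x)| \leq \left| d(\pi_u(x), H_{\theta,\ell}) - d(\pi_u(x), H_{\theta',\ell}) \right| \eqsp,
\]
and it suffices to bound the right-hand side uniformly in $x$.

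Next, I would prove the elementary fact that for any two half-open intervals $A = [a_1, a_2)$ and $B = [b_1, b_2)$ with $a_1 \leq a_2 < \infty$, $b_1 \leq b_2 < \infty$, and any $z \in \Rset$,
\[
\bigl| \min_{y \in A} |z-y| - \min_{y \in B} |z-y| \bigr| \leq \max(|a_1-b_1|, |a_2-b_2|) \eqsp.
\]
This is just the standard Hausdorff-distance bound on the distance-to-set function; the quickest derivation is to split on the position of $z$ relative to the two intervals (inside one, outside both, etc.) and check each case directly, using that projecting onto $[a_1,a_2]$ gives $\min_{y\in A}|z-y| = (a_1 - z)_+ + (z - a_2)_+$ and similarly for $B$, so the difference is bounded by the sum $|a_1-b_1| + |a_2-b_2|$ in the worst case — actually the tighter max bound follows by observing that the two summands cannot both contribute (only one endpoint of each interval is active at once).

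Applying this with $A = H_{\theta,\ell} = [\xi_{\theta,\ell-1}, \xi_{\theta,\ell})$ and $B = H_{\theta',\ell}$ (all endpoints are finite since $\ell \leq S-1$, and for $\ell = 1$ both lower endpoints equal $0$ so only the $\xi_{\cdot,1}$ term contributes), I get
\[
\left| d(\pi_u(x), H_{\theta,\ell}) - d(\pi_u(x), H_{\theta',\ell}) \right| \leq \frac{1}{r} \max\bigl(|\xi_{\theta,\ell-1} - \xi_{\theta',\ell-1}|, |\xi_{\theta,\ell} - \xi_{\theta',\ell}|\bigr) \leq \frac{1}{r} \sup_{\ell \in \{1,\dots,S-1\}} |\xi_{\theta,\ell} - \xi_{\theta',\ell}| \eqsp,
\]
and taking the supremum over $x$ on the left yields the claim. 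The only minor obstacle is the case analysis for the Hausdorff-type bound, but it is purely computational and there are no real analytic obstacles here — the lemma is a soft-distance Lipschitz estimate in disguise.
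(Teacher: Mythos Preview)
Your argument is correct and follows exactly the paper's approach: apply the $1$-Lipschitz property of $t \mapsto (1-t)_+$ to reduce to a bound on $|d(\pi_u(x),H_{\theta,\ell}) - d(\pi_u(x),H_{\theta',\ell})|$, then control this by the displacement of the ring endpoints. The paper's proof is in fact terser than yours --- it writes only the first step and declares the proof complete --- so your Hausdorff-type case analysis for the interval distance is precisely the detail the paper leaves to the reader.
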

\begin{proof}
  Note that $|(1-a)_+ - (1-b)_+| \leq |b-a|$. Therefore, for all $x \in \X$ :
\[
|h_{\theta,l}(x) - h_{\theta',l}(x)| \leq \frac{\left| d(\mesobs(x),
    \anneau_{\theta,l}) -d(\mesobs(x),\anneau_{\theta',l}) \right|}{r} \eqsp.
\]
This concludes the proof.
\end{proof}
{\em (Proof of R\ref{hyp:proof:subset}($k$))} We prove there exist an integer
$m_\star \geq 1$ and a positive r.v. $N$ such that
\[
\PP(N < \infty)=1 \eqsp, \qquad \PP\left( \bigcap_{n \geq N} \left \{ \inf_x
    \int g_{\theta_n}(x,y) \theta_n(\rmd y) \geq 1/m_\star \right\}\right)=1 \eqsp.
\]
To that goal, we prove that with probability $1$, for all $n$ large enough,
\begin{align}
  \inf_x \int g_{\theta_n}(x,y) \theta_n(\rmd y) \geq \inf_{\ell \in \{1,
    \cdots, S-1 \}} \int h_{\theta_\star, \ell}(y) \ \theta_\star(\rmd y)
  \eqsp,
  \label{eq:tool2:prop:subsetTheta}
\end{align}
and use the assumption E\ref{EE:ring}\ref{ring:c}. For all $x$ and $\T$, there
exists a ring index $l_{x,\theta} \in \{1, \cdots, S \}$ such that $\mesobs(x)
\in \anneau_{\theta,l_{x,\theta}}$. Upon noting that $d(\mesobs(x),
\anneau_{\theta,\ell_{x,\theta}}) =0$, it holds
\begin{align*}
  \liminf_n \inf_x \int g_{\theta_n}(x,y) \theta_n(\rmd y) \geq \liminf_n \inf_{l \in \{1,
    \cdots, S\}} \int h_{\theta_n, l}(y) \theta_n(\rmd y) \eqsp.
\end{align*}
We write
\begin{align*}
  \int h_{\theta_n, l}(y) \theta_n(\rmd y) & \geq \int h_{\theta_\star, l}(y)
  \theta_n(\rmd y)
  - \int  \left| h_{\theta_n, l}(y) - h_{\theta_\star, l}(y) \right| \theta_n(\rmd y) \\
  & \geq \int h_{\theta_\star, l}(y) \theta_n(\rmd y) - \sup_{y \in \X} \left|
    h_{\theta_n, l}(y) - h_{\theta_\star, l}(y) \right| \eqsp.
\end{align*}
By definition of $h_{\theta_\star,\ell}$, $y \mapsto h_{\theta_\star, l}(y)$ is
continuous and bounded. Therefore, by R\ref{hyp:proof:lln}(k),
Lemma~\ref{lemme:conv:h} and E\ref{EE:ring}\ref{ring:a}, the proof of
(\ref{eq:tool2:prop:subsetTheta}) is concluded by
\[
\liminf_n \int h_{\theta_n, l}(y) \theta_n(\rmd y) > \int h_{\theta_\star, l}(y)
\theta_\star(\rmd y) \eqsp.
\]

\subsection{Proof of  R\ref{hyp:proof:pi}($k+1$)}
\label{secproof:prop:cvg:pi}
First of all, observe that by definition of $\pi_\T$ (see
Proposition~\ref{drift:small:aees}) and the expression of $P_\T$,
$\pi_{\T_\star} \propto \pi^{\beta_{k+1}}$. We check the conditions of
\cite[Theorem 2.11]{fort:moulines:priouret:2010}.  By
Proposition~\ref{prop:subsetTheta} it is sufficient to prove that for any  $q
\geq 1$, $\lim_{n \to \infty} |\pi_{\theta_n}(f) - \pi_{\theta_\star}(f) | \un_{\bigcap_{j
    \geq q} \{\theta_j \in \tetaset_{m_\star} \}} =0$ w.p.1.

\paragraph{Case $f$ bounded}   
Lemma~\ref{lemme:unif:ergo} and R\ref{hyp:proof:lln}($k$) show that on the set
$\bigcap_{j \geq q} \{\theta_j \in \tetaset_{m_\star}\}$, $\limsup_n
C_{\theta_n} < \infty$ and $\limsup_n (1-\rho_{\theta_n})^{-1} < \infty$ w.p.1.
Equicontinuity of the class $\{P_\theta f, \theta \in \tetaset_{m_\star} \}$,
where $f$ is a bounded continuous function on $\X$, will follow from
Lemmas~\ref{lem:ReguX:G} to \ref{lem:Equicont}. Finally, the weak convergence
of the transition kernels is proved in Lemma~\ref{lem:AScvgNoyaux}.

\paragraph{Case $f$ unbounded} Following the same lines as in
the proof of \cite[Theorem 3.5]{fort:moulines:priouret:2010}, it can be proved
that the above discussion for $f$ bounded and
Proposition~\ref{drift:small:aees}\eqref{drift:small:aees:item2} imply
\begin{align*}
 \lim_{n \to \infty}
\{\pi_{\theta_n}(f) - \pi_{\T_\star}(f) \}\un_{\cap_{j \geq q} \{\theta_j \in
  \Theta_{m_\star} \}} =0
\end{align*}
w.p.1.  for any continuous function $f$ such that
$|f|_{W^a_{k+1}}< \infty$.

\begin{lemma}
\label{lem:ReguX:G}
For all $\theta \in \bigcup_m \tetaset_m$, and $ x,x'$, $\sup_y \left| g_\theta(x,y) - g_\theta(x',y) \right| \leq \frac{S}{r} | \pi(x) - \pi(x')|$.
\end{lemma}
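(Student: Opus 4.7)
The plan is to unwind the definition of $g_\theta$ as a finite sum and propagate Lipschitz bounds through the three layers: the sum over rings, the positive-part cutoff, and the distance-to-ring function.

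First I would write
\[
g_\theta(x,y) - g_\theta(x',y) = \sum_{\ell=1}^S \bigl(h_{\theta,\ell}(x) - h_{\theta,\ell}(x')\bigr)\, h_{\theta,\ell}(y),
\]
and use $0 \leq h_{\theta,\ell}(y) \leq 1$ (which is immediate from the definition $h_{\theta,\ell}(y) = (1 - d(\pi_u(y), H_{\theta,\ell}))_+$) together with the triangle inequality to get
\[
\sup_y \bigl| g_\theta(x,y) - g_\theta(x',y) \bigr| \leq \sum_{\ell=1}^S \bigl| h_{\theta,\ell}(x) - h_{\theta,\ell}(x') \bigr|.
\]

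Next I would control each summand. The elementary inequality $|(1-a)_+ - (1-b)_+| \leq |a-b|$ (already used in the proof of Lemma~\ref{lemme:conv:h}) gives
\[
\bigl| h_{\theta,\ell}(x) - h_{\theta,\ell}(x') \bigr| \leq \bigl| d(\pi_u(x), H_{\theta,\ell}) - d(\pi_u(x'), H_{\theta,\ell}) \bigr|.
\]
For the soft distance (\ref{eq:soft:selection:function}), the map $t \mapsto \min_{z \in H_{\theta,\ell}} |t - z|$ is $1$-Lipschitz as a distance to a subset of $\mathbb{R}$, so the above quantity is bounded by $r^{-1} |\pi_u(x) - \pi_u(x')|$. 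Summing the $S$ terms yields the claim.

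The argument is essentially three chained Lipschitz estimates, so there is no real obstacle; the only point requiring a moment of care is that the statement is meaningful under the soft distance (\ref{eq:soft:selection:function}) adopted in Section~\ref{main:results}, and that the bound involves $\pi_u$ rather than the normalized density $\pi$ (they differ only by a multiplicative constant, which is immaterial here since the constants absorbed in the right-hand side carry through).
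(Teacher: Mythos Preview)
Your proof is correct and follows essentially the same route as the paper's own argument: expand $g_\theta$ as the sum over rings, bound $h_{\theta,\ell}(y)$ by $1$, apply the $1$-Lipschitz property of $(1-\cdot)_+$, and then the $1$-Lipschitz property of $t\mapsto \min_{z\in H_{\theta,\ell}}|t-z|$. Your remark about $\pi_u$ versus $\pi$ is well taken; the paper is slightly loose on this point (it writes $\pi$ in the statement while the definitions involve $\pi_u$), and as you note the discrepancy is absorbed by the normalizing constant.
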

\begin{proof}
  By (\ref{eq:FonctionSelectionEE}),
  \begin{align*}
    |g_{\theta}(x,y)-g_{\theta}(x',y)| \leq \sum_{l=1}^S
    |h_{\theta,l}(x)-h_{\theta,l}(x')| h_{\theta,l}(y) \leq \sum_{l=1}^S
    |h_{\theta,l}(x)-h_{\theta,l}(x')| \eqsp.
\end{align*}
The proof is completed since
\[
|h_{\theta,l}(x)-h_{\theta,l}(x')| \leq
\frac{|d(\pi(x),\anneau_{\theta,l})-d(\pi(x'),\anneau_{\theta,l})|}{r} \leq
\frac{|\pi(x)-\pi(x')|}{r} \eqsp.
\]
\end{proof}
\begin{lemma}
   \label{lem:ReguX:alpha} Assume E\ref{EE1}\ref{EE1:a}.
   For all $m \geq 1$, there exists a constant $C_m$ such that for all
   $x,x',y,y' \in \X$ and $\T \in \Theta_m$ 
\begin{align} \label{majoration:alpha:x}
  \left| \alpha_\theta(x,y) - \alpha_{\theta}(x',y) \right| &\leq C_m \, \left[
    \left| \pi^{\beta_k-\beta_{k+1}}(x)-\pi^{\beta_k-\beta_{k+1}}(x') \right|
    + \left| \pi(x) - \pi(x')\right| \right] \eqsp, \\
 \label{majoration:alpha:y}
 \left| \alpha_\theta(x,y) - \alpha_{\theta}(x,y') \right| & \leq C_m
 \left[\left| \pi^{\beta_k-\beta_{k+1}}(y)-\pi^{\beta_k-\beta_{k+1}}(y')
   \right| + \left| \pi(y) - \pi(y')\right| \right] \eqsp.
\end{align}
\end{lemma}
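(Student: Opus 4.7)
\textbf{Proof plan for Lemma~\ref{lem:ReguX:alpha}.}

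The plan is to rewrite $\alpha_\theta$ in a factorized form, derive a uniform lower bound on its denominator valid for $\theta\in\Theta_m$, and reduce both inequalities to a single scalar bound on $|\phi(\cdot)-\phi(\cdot)|$. Observing that
\[
\alpha_\theta(x,y) = 1 \wedge \frac{\pi^{\beta_{k+1}-\beta_k}(y)\,G_\theta(x)}{\pi^{\beta_{k+1}-\beta_k}(x)\,G_\theta(y)} = 1 \wedge \frac{\phi(x)}{\phi(y)} \eqsp, \qquad \phi(z) \eqdef G_\theta(z)\,\pi^{\beta_k-\beta_{k+1}}(z) \eqsp,
\]
and setting $M \eqdef \sup_\X \pi < \infty$ (by E\ref{EE1}\ref{EE1:a}) and $s \eqdef \beta_k - \beta_{k+1} < 0$, one has $\pi^s(z) \geq M^s$ and $G_\theta(z) \geq 1/m$ for $\theta \in \Theta_m$, hence $\phi(z) \geq M^s/m$ uniformly in $z$. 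Writing $1\wedge u/v = \min(u,v)/v$ and using $|\min(a,c)-\min(b,c)|\leq |a-b|$ (for the $x$-variation) together with a short four-case analysis on the orderings of $\phi(x),\phi(y),\phi(y')$ yielding $|\min(a,u)/u - \min(a,v)/v| \leq |u-v|/\min(u,v)$ (for the $y$-variation), gives
\[
|\alpha_\theta(x,y)-\alpha_\theta(x',y)| \leq m\,M^{-s}\,|\phi(x)-\phi(x')|, \qquad |\alpha_\theta(x,y)-\alpha_\theta(x,y')| \leq m\,M^{-s}\,|\phi(y)-\phi(y')| \eqsp.
\]

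The main obstacle is then to bound $|\phi(x)-\phi(x')|$ by the right-hand side of \eqref{majoration:alpha:x} despite the unboundedness of $\pi^s$ on $\X$. I would combine the trivial bound $G_\theta \leq S$ with Lemma~\ref{lem:ReguX:G} and choose, among the two symmetric splittings of $|ab-a'b'|$, the one that keeps $\min\{\pi^s(x),\pi^s(x')\}$ in front of $|\pi(x)-\pi(x')|$, obtaining
\[
|\phi(x)-\phi(x')| \leq S\,|\pi^s(x)-\pi^s(x')| + \frac{S}{r}\,|\pi(x)-\pi(x')|\,\min\{\pi^s(x),\pi^s(x')\} \eqsp.
\]
To absorb the last term I would apply the following mean-value estimate: for $0 < a < b \leq M$ and $s<0$, writing $a^s - b^s = -s \int_a^b t^{s-1}\,dt$ and using that $t\mapsto t^{s-1}$ is decreasing on $(0,\infty)$ gives $|a^s-b^s| \geq |s|(b-a)\,b^{s-1}$, which after multiplying by $b\leq M$ rearranges into
\[
(b-a)\,b^s \leq \frac{M}{|s|}\,|a^s-b^s| \eqsp.
\]
Applied with $\{a,b\}=\{\min(\pi(x),\pi(x')),\max(\pi(x),\pi(x'))\}$ (so that $b^s = \min\{\pi^s(x),\pi^s(x')\}$), this yields $|\pi(x)-\pi(x')|\,\min\{\pi^s(x),\pi^s(x')\} \leq (M/|s|)\,|\pi^s(x)-\pi^s(x')|$, and therefore
\[
|\phi(x)-\phi(x')| \leq S\Bigl(1 + \frac{M}{r|s|}\Bigr)\,|\pi^s(x)-\pi^s(x')| \eqsp;
\]
an identical argument applies to $|\phi(y)-\phi(y')|$.

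Combining these bounds with the factor $m M^{-s}$ yields \eqref{majoration:alpha:x} and \eqref{majoration:alpha:y} with a constant $C_m$ linear in $m$, the additional $|\pi(x)-\pi(x')|$ (resp.\ $|\pi(y)-\pi(y')|$) term in the stated lemma being absorbed trivially since it only weakens the right-hand side. The only non-routine step is the mean-value estimate, which is precisely what converts the apparent obstruction caused by the unboundedness of $\pi^s$ into a Lipschitz-type bound purely in terms of $|\pi^s(x)-\pi^s(x')|$.
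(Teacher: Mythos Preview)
Your argument is correct, and it differs from the paper's in an interesting way. The paper writes $\alpha_\theta(x,y)-\alpha_\theta(x',y)=(1\wedge a)-(1\wedge b)$ and uses $|(1\wedge a)-(1\wedge b)|\leq |a-b|\bigl(\un_{a\leq 1}+\un_{b\leq 1,\,a>1}\bigr)$; assuming (by symmetry) $b\leq 1$, this constraint yields $\pi^{\beta_{k+1}-\beta_k}(y)\pi^{\beta_k-\beta_{k+1}}(x')\leq G_\theta(y)/G_\theta(x')\leq Sm$, which is precisely what tames the unbounded factor $\pi^{\beta_k-\beta_{k+1}}(x')$ multiplying $|G_\theta(x)-G_\theta(x')|$. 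Lemma~\ref{lem:ReguX:G} then gives the $|\pi(x)-\pi(x')|$ term directly, with no further work; the resulting constant is of order $m^2$.

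You instead bypass the ``$b\leq 1$'' trick entirely: you lower-bound $\phi$ uniformly by $M^s/m$ and then handle the unbounded $\pi^s$ factor via the calculus inequality $(b-a)b^s\leq (M/|s|)|a^s-b^s|$, which collapses the whole bound onto $|\pi^s(x)-\pi^s(x')|$ alone. This is slightly sharper (only one of the two terms in the stated lemma is actually needed, and your $C_m$ is linear rather than quadratic in $m$), at the cost of invoking a mean-value estimate that the paper's more structural argument avoids. Both routes are short; the paper's exploits the acceptance-ratio structure, yours is a direct analytic estimate.
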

\begin{proof}
  By definition of $\alpha_\theta$ (see (\ref{eq:DefinitionAlphaAEE})),
  $\alpha_{\theta}(x,y)-\alpha_{\theta}(x',y) = \left( 1 \wedge a \right) -
  \left( 1 \wedge b \right) $, with
\begin{align*}
  a=\frac{\pi^{\beta_{k+1}-\beta_{k}}(y)}{\pi^{\beta_{k+1}-\beta_{k}}(x)}
  \frac{\int g_{\theta}(x,z) \theta(\rmd z)}{\int g_{\theta}(y,z) \theta(\rmd z) }
  \quad \textrm{and} \quad
  b=\frac{\pi^{\beta_{k+1}-\beta_{k}}(y)}{\pi^{\beta_{k+1}-\beta_{k}}(x')}
  \frac{\int g_{\theta}(x',z) \theta(\rmd z)}{\int g_{\theta}(y,z) \theta(\rmd z) }
  \eqsp.
\end{align*}
Note that $|(1 \wedge a) - (1 \wedge b)| \leq |a-b| \left( \un_{a \leq 1} +
  \un_{ b \leq 1, a > 1 } \right)$. By symmetry, we can assume that $b \leq 1$
and this implies 
\begin{align*}
  \frac{\pi^{\beta_{k+1}-\beta_{k}}(y)}{\pi^{\beta_{k+1}-\beta_{k}}(x')} \leq \frac{\int g_{\theta}(y,z)
    \theta(\rmd z) }{\int g_{\theta}(x',z) \theta(\rmd z)} \leq S m \eqsp,
\end{align*}
since $g_\T(x,y) \leq S$. Therefore,
\begin{multline*}
  |a-b| = \frac{\pi^{\beta_{k+1}-\beta_{k}}(y)}{\int  g_{\theta}(y,z) \theta(\rmd z) } \left|\frac{\int g_{\theta}(x,z) \theta(\rmd z)}{\pi^{\beta_{k+1}-\beta_{k}}(x)} -\frac{\int g_{\theta}(x',z) \theta(\rmd z)}{\pi^{\beta_{k+1}-\beta_{k}}(x')}  \right| \\
  \leq S m \left[ \pi^{\beta_{k+1}-\beta_{k}}(y) \left|
      \pi^{\beta_k - \beta_{k+1}}(x)-\pi^{\beta_k -\beta_{k+1}}(x') \right| + m
    \left|\int (g_{\theta}(x,z) - g_{\theta}(x',z)) \theta(\rmd z) \right| \right]
  \eqsp.
\end{multline*}
The proof of (\ref{majoration:alpha:x}) is concluded by
Lemma~\ref{lem:ReguX:G}. The proof of (\ref{majoration:alpha:y}) is on the same
lines and omitted.
\end{proof}

\begin{lemma}
  \label{lem:Equicont}
  Assume E\ref{EE1} and E\ref{EE3}\ref{EE3:a}. For any $m \geq 1$ and for any
  continuous bounded function $f$, the class of functions $\{P_\theta f, \theta
  \in \tetaset_m \}$ is equicontinuous.
\end{lemma}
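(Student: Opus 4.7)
My approach would begin by observing that for $\theta \in \tetaset_m$ one has $\theta \in \bigcup_{m' \geq 1} \tetaset_{m'}$, so
\[
P_\theta^{(k+1)} = (1-\InterractionProba)\, P^{(k+1)} + \InterractionProba\, K_\theta^{(k+1)}.
\]
The Feller assumption E\ref{EE3}\ref{EE3:a} takes care of the $P^{(k+1)} f$ summand immediately, so the problem reduces to showing that the class $\{K_\theta^{(k+1)} f,\; \theta \in \tetaset_m\}$ is equicontinuous. With the notation (\ref{eq:ThetaTilde}) I would rewrite
\[
K_\theta^{(k+1)} f(x) \;=\; f(x) + \int \bigl(f(y)-f(x)\bigr) \alpha_\theta^{(k+1)}(x,y) \,\tilde\theta(x,\rmd y),
\]
setting $\varphi_\theta(x,y)=f(x)+(f(y)-f(x))\alpha_\theta^{(k+1)}(x,y)$, so that $|\varphi_\theta|\leq |f|_\infty$.

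The next step is the standard "swap one variable at a time" split
\[
K_\theta^{(k+1)} f(x) - K_\theta^{(k+1)} f(x') = \int\!\bigl[\varphi_\theta(x,y)-\varphi_\theta(x',y)\bigr]\tilde\theta(x,\rmd y) \;+\; \int \varphi_\theta(x',y)\bigl[\tilde\theta(x,\rmd y)-\tilde\theta(x',\rmd y)\bigr].
\]
For the integrand piece I would use the identity $\varphi_\theta(x,y)-\varphi_\theta(x',y) = (f(x)-f(x'))(1-\alpha_\theta^{(k+1)}(x',y)) + (f(y)-f(x))(\alpha_\theta^{(k+1)}(x,y)-\alpha_\theta^{(k+1)}(x',y))$, together with Lemma~\ref{lem:ReguX:alpha}, the continuity of $\pi$ (E\ref{EE1}\ref{EE1:b}) and continuity of $f$, to get a bound that is $\theta$-uniform on $\tetaset_m$ and tends to $0$ as $x\to x'$. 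For the measure piece, I would exploit
\[
\tilde\theta(x,\rmd y)-\tilde\theta(x',\rmd y) = \frac{g_\theta(x,y)-g_\theta(x',y)}{G_\theta(x)}\,\theta(\rmd y) \;+\; g_\theta(x',y)\left[\frac{1}{G_\theta(x)}-\frac{1}{G_\theta(x')}\right]\theta(\rmd y),
\]
apply the lower bound $G_\theta\geq 1/m$ valid on $\tetaset_m$, the trivial upper bound $g_\theta\leq S$, and Lemma~\ref{lem:ReguX:G}, to arrive at $\|\tilde\theta(x,\cdot)-\tilde\theta(x',\cdot)\|_{TV}\leq C_m\,|\pi(x)-\pi(x')|$. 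Continuity of $\pi$ then closes out this piece as well.

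The hard part, and the reason the sets $\tetaset_m$ were introduced, is the normalizer $G_\theta(x)$: without the lower bound $G_\theta\geq 1/m$ neither the probability measure $\tilde\theta(x,\cdot)$ nor the difference $G_\theta(x)^{-1}-G_\theta(x')^{-1}$ can be controlled in a $\theta$-uniform way, and equicontinuity would fail. Once this lower bound is in hand, the remainder of the argument is a routine combination of the Feller property, the continuity of $\pi$ and $f$, and the two regularity lemmas Lemma~\ref{lem:ReguX:G} and Lemma~\ref{lem:ReguX:alpha}.
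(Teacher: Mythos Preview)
Your proposal is correct and follows essentially the same approach as the paper: the paper also splits off the $(1-\InterractionProba)P^{(k+1)}f$ part via the Feller property, decomposes $K_\theta f(x)-K_\theta f(x')$ into exactly the same three pieces (your $\varphi_\theta$ identity reproduces their $(f(x)-f(x'))(1-\int\alpha_\theta(x',y)\tilde\theta(x,\rmd y))$ and $\int(f(y)-f(x))(\alpha_\theta(x,y)-\alpha_\theta(x',y))\tilde\theta(x,\rmd y)$ terms, while your measure piece is their $\int\alpha_\theta(x',y)(f(y)-f(x'))(\tilde\theta(x,\rmd y)-\tilde\theta(x',\rmd y))$ after noting the constant $f(x')$ integrates to zero), and then bounds everything with Lemmas~\ref{lem:ReguX:G} and~\ref{lem:ReguX:alpha} together with the lower bound $G_\theta\geq 1/m$. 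Your closing remark on the role of $\tetaset_m$ is apt and matches the paper's use of that bound.
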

\begin{proof}
  Let $f$ be a continuous function on $\X$, bounded by $1$. Let $m \geq 1$ and
  $\theta \in \tetaset_m$.  We have
\begin{align*}
  P_\theta f(x) - P_{\theta} f(x') = & (1-\InterractionProba) \left( Pf(x) - Pf(x')
  \right) + \InterractionProba \left(f(x) - f(x') \right) \left(1-\int
    \alpha_\theta(x',y)
    \tilde \theta(x,\rmd y)\right) \\
  &  + \InterractionProba \int \left( f(y) -f(x) \right) \left( \alpha_\theta(x,y) - \alpha_{\theta}(x',y) \right) \tilde {\theta}(x,\rmd y) \\
  & + \InterractionProba \int \alpha_{\theta}(x',y) (f(y)-f(x'))
  (\tilde{\theta}(x,\rmd y)-\tilde{\theta}(x',\rmd y)) \eqsp,
\end{align*}
where $\tilde \theta$ is given by (\ref{eq:ThetaTilde}).  This yields to
\begin{align*}
  \left| P_\theta f(x) - P_{\theta} f(x') \right| \leq & \left| Pf(x) - Pf(x')
  \right| +  \left|f(x) -f(x') \right| \\
  & + 2 \sup_y \left| \alpha_\theta(x,y)- \alpha_{\theta}(x',y)\right| +2
  \left\| \tilde \theta(x,.) - \tilde {\theta}(x',.) \right\|_{\mathrm{TV}} \eqsp.
\end{align*}
We have
\begin{align*}
 \|\tilde {\theta}(x,.) - \tilde {\theta}(x',.) \|_{\mathrm{TV}} & \leq
    \frac{1}{G_\theta(x)} \sup_y \left| g_\theta(x,y) - g_\theta(x',y) \right|
    +
    \frac{S}{G_\theta(x) G_\theta(x')} \left| G_\theta(x) - G_\theta(x') \right| \\
 &  \leq m \sup_y \left| g_\theta(x,y) - g_\theta(x',y) \right| +
    S m^2 \sup_y \left| g_\theta(x,y) - g_\theta(x',y) \right| \eqsp,
\end{align*}
where $G_\theta$ is given by (\ref{eq:ThetaTilde}).  So
Lemmas~\ref{lem:ReguX:G} and \ref{lem:ReguX:alpha} imply that for all $m \geq
1$, there exists a constant $C_m$ such that for all $\T \in \tetaset_m$:
\begin{align} \label{difference:pteta:x}
  \left| P_\theta f(x) - P_{\theta} f(x') \right| \leq & \left| Pf(x) - Pf(x')
  \right| +  \left|f(x) -f(x') \right| \nonumber\\
  & + C_m \left( | \pi(x) - \pi(x')|+ |\pi^{\beta_k-\beta_{k+1}}(x) - \pi^{\beta_k-\beta_{k+1}}(x')
    |\right) \eqsp.
\end{align}
The proof is concluded since $P$ is Feller and $\pi$ is continuous.
\end{proof}

\begin{lemma}
 \label{lem:AScvgNoyaux}
 Let $m \geq 1$. Assume E\ref{EE1}, E\ref{EE:ring}\ref{ring:a} and
 R\ref{hyp:proof:lln}(k). For all $x \in \X$, there exists a set $\Omega_{x}$
 such that $\PP(\Omega_{x})=1$ and for all $\omega \in \Omega_{x}$ and any
 bounded continuous function $f$ 
\[
\lim_{n \to \infty} \left| P_{\T_n(\omega)}f(x) - P_{\T_\star}f(x) \right| \un_{\bigcap_j
  \{\theta_j \in \tetaset_m \}}= 0 \eqsp.
\]
\end{lemma}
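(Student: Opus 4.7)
My strategy is to reduce the convergence of $P_{\theta_n(\omega)}f(x)$ to that of the equi-energy kernel $K_{\theta_n}^{(k+1)}$, then handle the $\theta$-variation and the measure-variation separately. On the event $\bigcap_j\{\theta_j\in\tetaset_m\}$ the indicator in the definition of $P_{\T}^{(k+1)}$ equals $1$, so $P_{\theta_n}^{(k+1)}=(1-\InterractionProba)P^{(k+1)}+\InterractionProba K_{\theta_n}^{(k+1)}$. The same decomposition holds for $\theta_\star$ once we check that $\theta_\star\in\bigcup_{m'}\tetaset_{m'}$: since every $x$ lies in some ring, $\sum_\ell h_{\theta_\star,\ell}(x)\geq 1$, whence $G_{\theta_\star}(x)\geq\inf_\ell\int h_{\theta_\star,\ell}(y)\theta_\star(\rmd y)>0$ by E\ref{EE:ring}\ref{ring:c}. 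Since $P^{(k+1)}$ does not depend on $\theta$, it suffices to prove $K_{\theta_n}^{(k+1)}f(x)\to K_{\theta_\star}^{(k+1)}f(x)$. Using (\ref{eq:DefinitionKthetaAEE}) I rewrite
\[
K_\theta^{(k+1)} f(x)=f(x)+G_\theta(x)^{-1}\int (f(y)-f(x))\,\alpha_\theta^{(k+1)}(x,y)\,g_\theta(x,y)\,\theta(\rmd y)\eqsp,
\]
which reduces the task to (i) $G_{\theta_n}(x)\to G_{\theta_\star}(x)$ and (ii) convergence of the numerator integral.

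I then fix the good event $\Omega_x$: by E\ref{EE:ring}\ref{ring:a} (taking a union over the finite set of indices $\ell$) and R\ref{hyp:proof:lln}($k$) applied to each member of a countable family of bounded continuous test functions that determines weak convergence on the Polish space $\X$, there is a full-probability event on which simultaneously $\sup_\ell|\xi_{\theta_n,\ell}-\xi_{\theta_\star,\ell}|\to 0$ and $\theta_n\to\theta_\star$ weakly. Both (i) and (ii) have the form $\theta_n(\phi_{\theta_n})\to\theta_\star(\phi_{\theta_\star})$, which I split as $\int(\phi_{\theta_n}-\phi_{\theta_\star})\,\rmd\theta_n+\int\phi_{\theta_\star}\,\rmd(\theta_n-\theta_\star)$. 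The second summand vanishes by the weak convergence on $\Omega_x$ once I check that $\phi_{\theta_\star}$ is bounded and continuous, which follows from continuity of $\pi$ and of the $h_{\theta_\star,\ell}$, the lower bound $G_{\theta_\star}(\cdot)\geq 1/m'$, and boundedness of $f$. The first summand is bounded by $\sup_y|\phi_{\theta_n}(y)-\phi_{\theta_\star}(y)|$: for (i) this is $\sup_y|g_{\theta_n}(x,y)-g_{\theta_\star}(x,y)|$, controlled by Lemma~\ref{lemme:conv:h} and E\ref{EE:ring}\ref{ring:a}; for (ii) I use the elementary bound $|ab-cd|\leq|b-d|+S|a-c|$ on the product $\alpha_\theta^{(k+1)}\cdot g_\theta$ (with $|\alpha|\leq 1$, $|g|\leq S$) together with $|1\wedge a-1\wedge b|\leq|a-b|$ and boundedness of $\pi^{\beta_{k+1}-\beta_k}$ (which holds since $\beta_{k+1}>\beta_k$ by construction and by E\ref{EE1}\ref{EE1:a}).

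The main obstacle is that bounding $\sup_y|\alpha_{\theta_n}^{(k+1)}(x,y)-\alpha_{\theta_\star}^{(k+1)}(x,y)|$ this way requires $\sup_y|G_{\theta_n}(y)-G_{\theta_\star}(y)|\to 0$, i.e.\ a convergence that is uniform in $y$, whereas the weak convergence of $\theta_n$ is only pointwise in its test function. The resolution exploits the separable structure
\[
g_{\theta_\star}(y,z)=\sum_{\ell=1}^{S}h_{\theta_\star,\ell}(y)\,h_{\theta_\star,\ell}(z)\eqsp,
\]
which gives, using $|h_{\theta_\star,\ell}|\leq 1$,
\[
\sup_y\left|\int g_{\theta_\star}(y,z)\,(\theta_n-\theta_\star)(\rmd z)\right|\leq\sum_{\ell=1}^{S}\left|\int h_{\theta_\star,\ell}(z)\,(\theta_n-\theta_\star)(\rmd z)\right|\eqsp,
\]
and the right-hand side vanishes on $\Omega_x$ by weak convergence applied to the finite family $\{h_{\theta_\star,\ell}\}$ of bounded continuous functions. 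Combined with the uniform (in $y,z$) control of $|g_{\theta_n}-g_{\theta_\star}|$ from Lemma~\ref{lemme:conv:h}, this yields $\sup_y|G_{\theta_n}(y)-G_{\theta_\star}(y)|\to 0$. Chaining the pieces together delivers $K_{\theta_n}^{(k+1)}f(x)\to K_{\theta_\star}^{(k+1)}f(x)$ on $\Omega_x\cap\bigcap_j\{\theta_j\in\tetaset_m\}$ for every bounded continuous $f$ simultaneously, which is the claim.
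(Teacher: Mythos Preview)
Your proof is correct and follows essentially the same route as the paper: both reduce to the equi-energy kernel, split the difference into a parameter-variation part (controlled uniformly via Lemma~\ref{lemme:conv:h} and E\ref{EE:ring}\ref{ring:a}) and a measure-variation part (handled by weak convergence of $\theta_n$ from R\ref{hyp:proof:lln}($k$)), and both resolve the key uniformity-in-$y$ issue for $G_{\theta_n}$ by exploiting the separable form $g_\theta(y,z)=\sum_\ell h_{\theta,\ell}(y)h_{\theta,\ell}(z)$. The only cosmetic difference is that the paper cites \cite[Proposition~3.3]{fort:moulines:priouret:2010} for the ``single $\Omega_x$ works for all $f$'' reduction, whereas you obtain it directly by fixing weak convergence of $\theta_n$ on a countable determining class.
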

\begin{proof}
  Following the same lines as in the proof of \cite[Proposition
  3.3.]{fort:moulines:priouret:2010}, it is sufficient to prove that for any $x
  \in \X$ and any bounded continuous function $f$, $\lim_{n \to \infty} P_{\theta_n}(f) =
  P_{\theta_\star}(f)$ w.p.1 on the set $\bigcap_j \{\theta_j \in \tetaset_m
  \}$.  Let $f$ and $x$ be fixed.  We write
\begin{align} \label{diff:P}
  P_\theta f(x) - P_{\theta'} f(x) = & \InterractionProba \int \left(
    \alpha_\theta(x,y) -
    \alpha_{\theta'}(x,y) \right) \left(f(y) -f(x) \right) \tilde \theta(x, \rmd y) \nonumber \\
  & + \InterractionProba \int \alpha_{\theta'}(x,y) \left(f(y) -f(x) \right)
  \left( \tilde{\T}(x,\rmd y) - \tilde{\T'}(x,\rmd y) \right) \eqsp,
\end{align}
where $\tilde \theta$ is given by (\ref{eq:ThetaTilde}).
Moreover,
\begin{align*}
  \tilde{\T}(x,\rmd y) - \tilde{\T'}(x,\rmd y)
& =\frac{g_\theta(x,y) \theta(\rmd y) -g_{\theta'}(x,y) \theta'(\rmd y)}{G_\T(x)} +
g_{\theta'}(x,y) \theta'(\rmd y)  \, \frac{(G_{\T'}(x) - G_\T(x))}{G_\T(x)G_{\T'}(x) } \eqsp.
\end{align*}
This yields to
  \begin{multline*}
    \InterractionProba^{-1} \left( P_{\theta_n}f(x) - P_{\theta_\star} f(x)
    \right) = \int \left( \alpha_{\theta_n}(x,y) - \alpha_{\theta_\star}(x,y)
    \right)\ \left( f(y) - f(x) \right) \tilde
    \theta_n(x, \rmd y)  \\
    - \int \frac{g_{\theta_{\star}}(x,y)}{G_{\theta_n}(x)}F(x,y)\left( \theta_\star( \rmd y) - \theta_n(\rmd y) \right) \\
- \int F(x,y) \left( (g_{\T_n}(x,y)-g_{\T_{\star}}(x,y)) \frac{\T_n(\rmd y)}{G_{\T_n}(x)} + g_{\theta_{\star}}(x,y) \theta_{\star}(\rmd y)  \, \frac{(G_{\T_{\star}}(x) - G_{\T_n}(x))}{G_{\T_n}(x)G_{\T_{\star}}(x) } \right)
    \eqsp,
  \end{multline*}
  where $F(x, y) = \alpha_{\theta_\star}(x,y) \left( f(y) - f(x) \right)$.
  There exists a constant $C_m$ such that on the set $\bigcap_n \{\theta_n \in
  \tetaset_m\}$, (see the proof of Lemma~\ref{lem:ReguX:alpha} for similar
  upper bounds)
  \begin{multline*}
     \left| \alpha_{\theta_n}(x,y) -
      \alpha_{\theta_\star}(x,y)\right| \leq C_m \, \left|
      \frac{G_{\theta_n}(x)}{G_{\theta_n}(y)} -
      \frac{G_{\theta_\star}(x)}{G_{\theta_\star}(y)}\right|  \\
    \leq m^2 S C_m \, \left(\left| G_{\theta_n}(x) - G_{\theta_\star}(x)
      \right|+\left| G_{\theta_n}(y) - G_{\theta_\star}(y) \right| \right)
  \end{multline*}
  where $G_\theta(x)$ is defined by (\ref{eq:ThetaTilde}). We write by
  definition of the function $g_\theta$ (see (\ref{eq:FonctionSelectionEE}))
  \begin{multline*}
    \sup_{x} \left| G_{\theta_n}(x) - G_{\theta_\star}(x) \right| \leq
    \sup_{x,z} |g_{\theta_n}(x,z) - g_{\theta_\star}(x,z)| + \sup_{x} \left| \int
      g_{\theta_\star}(x,z) \theta_n(\rmd z) - \int g_{\theta_\star}(x,z)
      \theta_\star(\rmd z) \right|
    \\
    \leq 2 \sum_{l=1}^S \sup_z |h_{\theta_n,l}(z) - h_{\theta_\star,l}(z)| +
    \sum_{l=1}^S \left| \int h_{\theta_\star,l}(z) \theta_n(\rmd z) - \int
      h_{\theta_\star,l}(z) \theta_\star(\rmd z) \right| \eqsp.
  \end{multline*}
  By Lemma~\ref{lemme:conv:h} and E\ref{EE:ring}\ref{ring:a}, the first term
  converges to zero w.p.1. Since $t \mapsto h_{\theta_\star,l}(t)$ is
  continuous, R\ref{hyp:proof:lln}(k) implies that the second term tends to
  zero w.p.1. Therefore, on the set $\bigcap_n \{\theta_n \in \tetaset_m\}$,
  $\sup_{x,y} \left| \alpha_{\theta_n}(x,y) -
    \alpha_{\theta_\star}(x,y)\right|$ converges to zero w.p.1, as well as
  $\sup_{x,y} |g_{\theta_n}(x,y) - g_{\theta_\star}(x,y)|$, and $ \sup_{x}
  \left| G_{\theta_n}(x) - G_{\theta_\star}(x) \right| $ .

  Note that by Lemma~\ref{lem:ReguX:alpha}, $y \mapsto F(x,y)$ is bounded and
  continuous. Therefore, following the same lines as above, it can be proved
  that under R\ref{hyp:proof:lln}(k) and E\ref{EE:ring}\ref{ring:a}, on the set
  $\bigcap_n \{\theta_n \in \tetaset_m\}$, $\lim_{n \to \infty} |\int F(x,y)\theta_n(x, \rmd
  y) -\int F(x,y) \theta_\star(x, \rmd y)|$ =0 w.p.1
\end{proof}

\subsection{Proof of R\ref{hyp:proof:ergo}(k+1)}
\label{secproof:ergo_ees}
We check the conditions of \cite[Theorem 2.1]{fort:moulines:priouret:2010}.
Let $f$ be a bounded continuous function on $\X$. By R\ref{hyp:proof:pi}(k+1),
$\lim_{n \to \infty} \pi_{\theta_n}(f) = \pi_{\T_\star}(f) \propto \pi^{\beta_{k+1}}$
w.p.1. Let $\delta >0$. By Proposition~\ref{prop:subsetTheta}, there exists $q
\geq 1$ such that $\PP( \bigcap_{n \geq q} \{\theta_n \in \tetaset_{m_\star}\}
) \geq 1- \delta$.  Following the same lines as in the proof of \cite[Theorem
3.4]{fort:moulines:priouret:2010}, it can be proved by using
Lemmas~\ref{lemme:unif:ergo}, \ref{lemme:conv:h} and \ref{lemme:cv:dv:proba}
and the condition E\ref{EE:ring}\ref{ring:b} that $\lim_{n \to \infty} \PE\left[
  \left(f(X_n) - \pi_{\theta_n}(f) \right) \un_{\bigcap_{n \geq q} \{\theta_n
    \in \tetaset_{m_\star}\}} \right] =0$.  This concludes the proof.
\begin{lemma} \label{lemme:cv:dv:proba}
  For all $m \geq 1$, there exists a constant $C_m$ such that for any $\theta,
  \theta' \in \tetaset_m$,
\[
D(\theta, \theta') \leq C_m \left( \| \theta - \theta' \|_{\mathrm{TV}} +
  \sup_{l, x} \left| h_{\theta,l}(x) -h_{\theta',l}(x) \right|\right) \eqsp.
\]
\end{lemma}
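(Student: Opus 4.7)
The plan is to reduce everything to the interacting part. Since for $\theta, \theta' \in \Theta_m$ we have $P_\theta = (1-\InterractionProba) P + \InterractionProba K_\theta$, it follows that $P_\theta(x,\cdot) - P_{\theta'}(x,\cdot) = \InterractionProba (K_\theta(x,\cdot) - K_{\theta'}(x,\cdot))$, so it suffices to control $\|K_\theta(x,\cdot) - K_{\theta'}(x,\cdot)\|_{\mathrm{TV}}$ uniformly in $x$. Testing against a bounded function $f$ with $|f|\leq 1$ and using the explicit form of $K_\theta$, the difference $K_\theta f(x) - K_{\theta'} f(x)$ splits into (i) a term involving $\alpha_\theta(x,y) - \alpha_{\theta'}(x,y)$ integrated against $\tilde\theta(x,\rmd y)$, and (ii) a term $\int \alpha_{\theta'}(x,y)(f(y)-f(x))\{\tilde\theta(x,\rmd y)-\tilde{\theta'}(x,\rmd y)\}$, exactly as in (\ref{diff:P}) of Lemma~\ref{lem:AScvgNoyaux}. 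This reduces the problem to bounding $\sup_y|\alpha_\theta(x,y)-\alpha_{\theta'}(x,y)|$ and $\|\tilde\theta(x,\cdot)-\tilde{\theta'}(x,\cdot)\|_{\mathrm{TV}}$ by $C_m(\|\theta-\theta'\|_{\mathrm{TV}} + \sup_{l,x}|h_{\theta,l}(x)-h_{\theta',l}(x)|)$.

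The common ingredient for both bounds is a control of $\sup_x|G_\theta(x)-G_{\theta'}(x)|$ and $\sup_{x,y}|g_\theta(x,y)-g_{\theta'}(x,y)|$. Writing $g_\theta(x,y)=\sum_\ell h_{\theta,\ell}(x)h_{\theta,\ell}(y)$ and inserting $\pm h_{\theta,\ell}(x)h_{\theta',\ell}(y)$, the inequality $h_{\theta,\ell}\leq 1$ yields $\sup_{x,y}|g_\theta(x,y)-g_{\theta'}(x,y)|\leq 2S\sup_{l,x}|h_{\theta,l}(x)-h_{\theta',l}(x)|$. Then, splitting $G_\theta(x)-G_{\theta'}(x)=\int(g_\theta-g_{\theta'})(x,z)\theta(\rmd z)+\int g_{\theta'}(x,z)(\theta-\theta')(\rmd z)$ and using $g_{\theta'}\leq S$ gives $\sup_x|G_\theta(x)-G_{\theta'}(x)|\leq 2S\sup_{l,x}|h_{\theta,l}-h_{\theta',l}|+S\|\theta-\theta'\|_{\mathrm{TV}}$, which is of the desired form.

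For the normalized measure, I write $\tilde\theta(x,\rmd y)-\tilde{\theta'}(x,\rmd y)=G_\theta(x)^{-1}(g_\theta(x,y)\theta(\rmd y)-g_{\theta'}(x,y)\theta'(\rmd y))+g_{\theta'}(x,y)\theta'(\rmd y)\{G_{\theta'}(x)-G_\theta(x)\}/\{G_\theta(x)G_{\theta'}(x)\}$, and use $1/G_\theta(x)\leq m$ for $\theta\in\tetaset_m$ together with the previous bounds to get $\|\tilde\theta(x,\cdot)-\tilde{\theta'}(x,\cdot)\|_{\mathrm{TV}}\leq C_m(\|\theta-\theta'\|_{\mathrm{TV}}+\sup_{l,x}|h_{\theta,l}-h_{\theta',l}|)$. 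For $\alpha_\theta-\alpha_{\theta'}$, I use $|1\wedge A-1\wedge B|\leq |A-B|$ and note that by symmetry one can assume $1\wedge B<1$, i.e.\ $B\leq 1$, which forces $\pi^{\beta_{k+1}-\beta_k}(y)/\pi^{\beta_{k+1}-\beta_k}(x)\leq G_{\theta'}(y)/G_{\theta'}(x)\leq Sm$; this is exactly the trick already used in Lemma~\ref{lem:ReguX:alpha}. The leftover factor $|G_\theta(x)/G_\theta(y)-G_{\theta'}(x)/G_{\theta'}(y)|$ is then estimated by adding and subtracting and using $G_\theta,G_{\theta'}\geq 1/m$, giving $\sup_y|\alpha_\theta(x,y)-\alpha_{\theta'}(x,y)|\leq C_m\sup_x|G_\theta(x)-G_{\theta'}(x)|$.

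Combining the two estimates yields the stated bound on $D(\theta,\theta')$. The main subtlety, as in Lemma~\ref{lem:ReguX:alpha}, is handling the a priori unbounded ratio $\pi^{\beta_{k+1}-\beta_k}(y)/\pi^{\beta_{k+1}-\beta_k}(x)$ in the acceptance ratio: the symmetric argument based on which of $\alpha_\theta,\alpha_{\theta'}$ is strictly less than $1$ is what makes the constants genuinely depend on $m$ (via $G_\theta\geq 1/m$), and this is the only place the membership $\theta,\theta'\in\tetaset_m$ is used quantitatively.
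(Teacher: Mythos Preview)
Your proof is correct and follows essentially the same route as the paper's: the decomposition (\ref{diff:P}) into an $\alpha_\theta-\alpha_{\theta'}$ term and a $\tilde\theta-\tilde{\theta'}$ term, the bound $\sup_{x,y}|g_\theta-g_{\theta'}|\leq 2S\sup_{l,x}|h_{\theta,l}-h_{\theta',l}|$, the splitting of $G_\theta-G_{\theta'}$, and the symmetry trick on $1\wedge A$ versus $1\wedge B$ to bound the a priori unbounded ratio $\pi^{\beta_{k+1}-\beta_k}(y)/\pi^{\beta_{k+1}-\beta_k}(x)$ by $Sm$ are all exactly what the paper does. The only cosmetic difference is that the paper packages the $\alpha$-bound directly as $\leq S^2 m^2(\sup_{x,y}|g_\theta-g_{\theta'}|+\|\theta-\theta'\|_{\mathrm{TV}})$ rather than routing through $\sup_x|G_\theta-G_{\theta'}|$ first, but this is the same estimate.
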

\begin{proof}
  By definition of $P_\T$, for all function $f$ bounded by $1$, (\ref{diff:P})
  holds.  So
\begin{align*}
  D(\theta,\theta') &\leq 2 \InterractionProba \sup_{x,y} |\alpha_{\theta}(x,y) -
  \alpha_{\theta'}(x,y) | \\
  & +2 \InterractionProba S m^2 \, \left(\sup_{x,y} |g_{\theta}(x,y)- g_{\theta'}(x,y)| +
    \|\T - \T'\|_{\mathrm{TV}} + \sup_x |G_{\T'}(x) - G_\T(x)| \right) \eqsp.
\end{align*}
The term $|\alpha_{\theta}(x,y)-\alpha_{\theta'}(x,y)|$ is equal to $|1 \wedge
a - 1 \wedge b|$ with
\begin{align*}
  a= \frac{\pi^{ \beta_{k+1}-\beta_k}(y) \int g_{\theta} (x,z)
    \theta(\rmd z)}{\pi^{\beta_{k+1}-\beta_k}(x)\int g_{\theta} (y,z) \theta(\rmd z)}
  \quad \textrm{and} \quad b= \frac{\pi^{\beta_{k+1}-\beta_k}(y) \int
    g_{\theta'} (x,z) \theta'(\rmd z)}{\pi^{\beta_{k+1}-\beta_k}(x)\int
    g_{\theta'} (y,z) \theta'(\rmd z)} \eqsp.
\end{align*}
Note that $ |1 \wedge a - 1 \wedge b| \leq |b-a| \left( \un_{\{b \leq 1, a > 1
    \}} + \un_{a \leq 1} \right)$. Therefore, for all $\theta,\theta' \in \tetaset_m$,
\[
\sup_{x,y}| \alpha_\theta(x,y) - \alpha_{\theta'}(x,y)| \leq S^2 m^2 \
\left(\sup_{x,y}| g_\theta(x,y) - g_{\theta'}(x,y) | + \|\theta-\theta' \|_{\mathrm{TV}}
\right) \eqsp.
\]
The term $|G_{\T'}(x) - G_\T(x)|$ is upper bounded  by
\begin{align*}
 |G_{\T'}(x) - G_\T(x)| \leq \sup_{x,y} | g_\theta(x,y) - g_{\theta'}(x,y) | + S \|\theta-\theta' \|_{\mathrm{TV}} \eqsp.
\end{align*}
Moreover,
 \[
   \left| g_{\theta}(x,y)-g_{\theta'}(x,y) \right| = \left| \sum_{l=1}^S
     [h_{\theta,l}(x)h_{\theta,l}(y)-h_{\theta',l}(x)h_{\theta',l}(y)] \right|
   \leq 2S \sup_{l,x} |h_{\theta,l}(x)-h_{\theta',l}(x)| \eqsp.
\]
This concludes the proof. \end{proof}

\subsection{Proof of R\ref{hyp:proof:lln}($k+1$)}
Let $a \in (0,\frac{1+\vitborn}{2} \wedge 1)$ and set $V = W^a$. We check the
conditions of \cite[Theorem 2.7]{fort:moulines:priouret:2010}.  By
Proposition~\ref{drift:small:aees}, condition A3 of
\cite{fort:moulines:priouret:2010} holds.  By R\ref{hyp:proof:pi}($k+1$),
$\lim_{n \to \infty} \pi_{\theta_n}(f) = \pi_{\T_\star}(f)$
w.p.1 for any continuous function $f$ in $\mathcal{L}_{W^a}$. Condition A4
(resp.  A5) of \cite{fort:moulines:priouret:2010} is proved in
Lemma~\ref{lemma:A5} (resp.  Lemma~\ref{lemma:a6}).

\begin{lemma}
\label{lemma:A5}
Assume E\ref{EE1}, E\ref{EE3}, E\ref{EE:ring}, R\ref{hyp:proof:lln}(k),
R\ref{hyp:proof:subset}(j) and $\PE[W_j(Y_0^{(j)})]< \infty$ for all $j \leq
k$. Then for any $a \in (0,\frac{1+\vitborn}{2} \wedge 1)$
\[
\sum_{j \geq 1} j^{-1} (L_{\theta_j} \vee L_{\theta_{j-1}})^6
D_{W^a}(\theta_j,\theta_{j-1}) W^a(X_j) < \infty \ \qquad \PP-\text{a.s.,}
\]
where $L_{\theta} = C_{\theta} \vee (1-\rho_{\theta})^{-1}$.
\end{lemma}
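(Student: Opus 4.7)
The plan is to localize on an event where the factor $(L_{\theta_j}\vee L_{\theta_{j-1}})^6$ is uniformly bounded, to extend Lemma~\ref{lemme:cv:dv:proba} from total-variation to the $W^a$-distance, and then to conclude via Fubini and the uniform moment bounds of Lemma~\ref{lemme:unif:moments}. First I fix $m_\star$ from R\ref{hyp:proof:subset}$(k)$ and work on the increasing events $A_q := \bigcap_{n\geq q}\{\theta_n \in \tetaset_{m_\star}\}$; since $\PP(\bigcup_q A_q)=1$, it suffices to prove almost-sure summability of the series on each $A_q$. R\ref{hyp:proof:lln}$(k)$ yields $\theta_n(W_k) \to \theta_\star^{(k)}(W_k)$ w.p.1, which combined with Lemma~\ref{lemme:unif:ergo} (see~(\ref{lemme:unif:ergo:tool1})) gives $\limsup_n L_{\theta_n}\leq M_\star$ a.s. on $A_q$ for some deterministic $M_\star$; the factor $(L_{\theta_j}\vee L_{\theta_{j-1}})^6$ is thus absorbed into a constant.

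Next, rerunning the decomposition~(\ref{diff:P}) of $P_\theta f - P_{\theta'}f$ with $|f|\leq W^a$, together with the elementary bound $\int W^a(y)\,\tilde{\theta}(x,\rmd y) \leq m_\star S\, \theta(W^a)$ valid on $\tetaset_{m_\star}$ and the pointwise estimates for $\alpha_\theta - \alpha_{\theta'}$, $G_\theta - G_{\theta'}$ and $g_\theta - g_{\theta'}$ borrowed from the proofs of Lemmas~\ref{lem:ReguX:alpha}, \ref{lem:AScvgNoyaux} and~\ref{lemme:cv:dv:proba}, I would derive an estimate of the form
\begin{equation*}
 D_{W^a}(\theta_j,\theta_{j-1}) \leq C_\star\Bigl[\|\theta_j-\theta_{j-1}\|_{W^a}\bigl(1+\theta_j(W^a)+\theta_{j-1}(W^a)\bigr) + \sup_l\bigl|\xi_{\theta_j,l}-\xi_{\theta_{j-1},l}\bigr|\Bigr]\eqsp.
\end{equation*}
Each factor is then explicit: writing $\theta_j = \tfrac{j-1}{j}\theta_{j-1}+\tfrac{1}{j}\delta_{Y_j^{(k)}}$ gives $\|\theta_j-\theta_{j-1}\|_{W^a} \leq j^{-1}(W^a(Y_j^{(k)})+\theta_{j-1}(W^a))$; E\ref{EE:ring}\ref{ring:b} provides $\sup_l|\xi_{\theta_j,l}-\xi_{\theta_{j-1},l}| = o(j^{-\gamma})$ w.p.1 for every $\gamma \in (0,\vitborn)$; and, since $W_{k+1}^a \in \mathcal{L}_{W_k}$ for $a \in (0,1]$ by E\ref{EE3}\ref{EE3:b} and the definition of $\tau_k$, R\ref{hyp:proof:lln}$(k)$ yields $\theta_j(W^a) \to \theta_\star^{(k)}(W^a) < \infty$ w.p.1.

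Combining these estimates, the $j$-th summand is dominated on $A_q$, up to a deterministic constant, by $j^{-2}W^a(Y_j^{(k)})W^a(X_j)\un_{A_q}+j^{-1-\gamma}W^a(X_j)\un_{A_q}$. The second series is almost surely finite by Fubini, using $\sum_j j^{-1-\gamma} < \infty$ and the uniform bound $\sup_j \PE[W_{k+1}^a(X_j)\un_{A_q}] < \infty$ obtained from Lemma~\ref{lemme:unif:moments} and Jensen's inequality. The first is handled by Cauchy--Schwarz applied to $\PE[W^a(Y_j^{(k)})W^a(X_j)\un_{A_q}] \leq \PE[W^{2a}(Y_j^{(k)})\un_{A_q}]^{1/2}\PE[W^{2a}(X_j)\un_{A_q}]^{1/2}$, the condition $a < \tfrac{1+\vitborn}{2}\wedge 1$ being precisely what makes the resulting $2a$-moments finite uniformly in $j$ via a second application of Lemma~\ref{lemme:unif:moments} combined with E\ref{EE3}\ref{EE3:b} at levels $k$ and $k+1$. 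The main obstacle I expect lies in the $W^a$-upgrade of Lemma~\ref{lemme:cv:dv:proba}: since $K_\theta$ carries the $x$-dependent normalization $G_\theta(x)$ and $\alpha_\theta(x,y)$ depends on both $G_\theta(x)$ and $G_\theta(y)$, extending the Lipschitz-type estimates to unbounded test functions requires careful tracking of the integrals $\int W^a(y) g_\theta(x,y)\theta(\rmd y)$ throughout every term of the expansion; a secondary subtlety is the cross moment $\PE[W^a(Y_j^{(k)})W^a(X_j)\un_{A_q}]$, in which $X_j$ and $Y_j^{(k)}$ are correlated through the equi-energy step, which is precisely where the upper bound on $a$ becomes essential.
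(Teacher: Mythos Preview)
Your overall architecture---localize on $\bigcap_{n\geq q}\{\theta_n\in\tetaset_{m_\star}\}$, absorb $(L_{\theta_j}\vee L_{\theta_{j-1}})^6$ via Lemma~\ref{lemme:unif:ergo}, upgrade Lemma~\ref{lemme:cv:dv:proba} to the $W^a$-scale, then conclude by Fubini and Lemma~\ref{lemme:unif:moments}---matches the paper's route. Two small inaccuracies are harmless: in your $D_{W^a}$ estimate the ring-boundary term should also carry a factor $(1+\theta_j(W^a)+\theta_{j-1}(W^a))$ (compare the paper's bound, where $\sup_l|\bornea_{\theta_j,l}-\bornea_{\theta_{j-1},l}|$ is multiplied by $\|\theta_j\|_V+\|\theta_{j-1}\|_V$), but this is absorbed since $\theta_j(W^a)\to\theta_\star(W^a)$ a.s.; and the localization must be taken at \emph{all} levels $\ell\leq k$ (the sets $A_{q,j}^{(k)}$ of Appendix~\ref{app:lemme:unif:moments}) for Lemma~\ref{lemme:unif:moments} to apply.

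There is, however, a genuine gap in your treatment of the cross term $\sum_j j^{-2}W^a(Y_j^{(k)})W^a(X_j)$. Your Cauchy--Schwarz step requires $\sup_j\PE[W^{2a}(X_j)\un_{A_q}]<\infty$, and Lemma~\ref{lemme:unif:moments} only delivers $W_{k+1}$-moments, not $W_{k+1}^{2a}$-moments; when $a>1/2$ (which is allowed since $(1+\vitborn)/2>1/2$) this bound is simply unavailable. Your claim that ``the condition $a<\tfrac{1+\vitborn}{2}\wedge 1$ is precisely what makes the $2a$-moments finite'' is wrong: that constraint has nothing to do with moments, it governs summability in $j$. The paper's device to avoid this obstruction is to set $s=1\vee(2a)$ and, \emph{before} taking expectations, raise the entire series to the power $1/s\leq 1$; subadditivity $(\sum a_j)^{1/s}\leq\sum a_j^{1/s}$ then reduces the required moment from $W^{2a}$ to $W^{2a/s}\leq W$, while the summability condition on $\sum j^{-(1+\gamma)/s}$ becomes $s<1+\gamma$, which is exactly where $a<(1+\vitborn)/2$ enters (choose $\gamma\in(2a-1,\vitborn)$ when $a>1/2$). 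Without this $1/s$-power trick your argument only covers $a\leq 1/2$.
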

\begin{proof}
  By R\ref{hyp:proof:subset}(j) for all $j \leq k$, it is sufficient to prove
  that for any positive integer $q$
\[
\sum_{j \geq 1} j^{-1} (L_{\theta_j} \vee L_{\theta_{j-1}})^6
D_V(\theta_j,\theta_{j-1}) V(X_j) \ \un_{A_{q,j}^{(k)}} < \infty \qquad
\PP-\text{a.s.}
\] 
where $A_{q,j}^{(k)}$ is defined in Appendix~\ref{app:lemme:unif:moments}.
Following the same lines as in the proof of Lemma~\ref{lemme:unif:moments}, it
can be proved that $\sum_{j=1}^{q} j^{-1} (L_{\theta_j} \vee
L_{\theta_{j-1}})^6 D_V(\theta_j,\theta_{j-1}) V(X_j) < \infty$ w.p.1.

By Lemma~\ref{lemme:unif:ergo} and R\ref{hyp:proof:lln}(k), on the set
$\bigcap_{l \geq q} \{\theta_l \in \tetaset_{m_\star} \}$, $\limsup_{n}
L_{\theta_n} < \infty$ w.p.1. Therefore, we have to prove that $ \sum_{j \geq q}
j^{-1} D_V(\theta_j,\theta_{j-1}) V(X_j) \un_{A_{q,j}^{(k)}} < \infty$ w.p.1.
Following the same lines as in the proof of Lemma~\ref{lemme:cv:dv:proba}, we
obtain that on the set $A_{q,j}^{(k)}$, there exists a constant $C_m$ such that
\begin{align*}
  D_V(\theta_j,\theta_{j-1}) & \leq C_m \left( \sup_{l} \left|
      \bornea_{\theta_j,l} -\bornea_{\theta_{j-1},l} \right| + \| \theta_j -
    \theta_{j-1} \|_{\mathrm{TV}} \right) \left(\|\theta_j\|_V
    +    \|\theta_{j-1}\|_V \right) \\
  & + C_m \ \| \theta_j-\theta_{j-1}\|_V \eqsp.
\end{align*}
Set $s,\gamma$, such that $s=1 \vee (2a)<1+\vit < 1 +\vitborn$. By E\ref{EE:ring}\ref{ring:b}, there
exists a r.v. $Z$ finite w.p.1 such that $\PP$-a.s.  
\begin{align*}
  |\bornea_{\T_n,l}-\bornea_{\T_{n-1},l}| + \| \theta_n - \theta_{n-1}
  \|_{\mathrm{TV}} \leq Z \left( \frac{1}{ n^{\vit}} +
    \frac{1}{n} \right) \eqsp.
\end{align*}
Therefore,  it holds
\begin{multline*}
  \mathcal{I}_\vit \eqdef \PE\left[\left( \sum_{j \geq q} j^{-1}
      \left( j^{-\vit} + j^{-1} \right) \left(\|\theta_j\|_V + \|\theta_{j-1}\|_V \right)V(X_j) \un_{A_{q,j}^{(k)}} \right)^{\frac{1}{s}} \right] \\
  \leq \sum_{j \geq q} j^{-1/s}  \left( j^{-\vit/s} + j^{-1/s} \right) \PE\left[ 
    \left(\|\theta_j\|_V + \|\theta_{j-1}\|_V \right)^{\frac{1}{s}}  V^{\frac{1}{s}}(X_j) \un_{A_{q,j}^{(k)}} \right] \eqsp.
\end{multline*}
We have,
\begin{equation*}
\mathcal{I}_\vit \leq 2  C(\vit) \ \sup_j\PE\left[\left(\|\theta_j\|_V \right)^{\frac{2}{s}}  \un_{A_{q,j}^{(k)}}
   \right]^{1/2} \ \sup_j \PE\left[V(X_j)^{\frac{2}{s}}
 \un_{A_{q,j}^{(k)}}
\right]^{1/2} \  \eqsp,
\end{equation*}
where $C( \vit) \eqdef \sum_{j \geq q} \left(
  j^{(-1-\vit)/s} + j^{-2/s} \right)$ is finite since $2/s > 1$ and
$1+ \vit > s$.  Since $V^{2/s} \leq W$, Lemma~\ref{lemme:unif:moments}
implies that $\sup_j \PE\left[ W(X_j) \un_{A_{q,j}^{(k)}}  \right] < \infty$.  In addition, since $2/s
> 1$ we have, by Jensen's inequality,
\begin{align*}
  \PE\left[\|\theta_j\|_V^{\frac{2}{s}} \un_{A_{q,j}^{(k)}} \right] & \leq
  \PE\left[\left( \frac{1}{j} \sum_{p=1}^j V(Y_p) \right)^{\frac{2}{s}}
    \un_{A_{q,j}^{(k-1)}} \right] \leq \PE\left[ \frac{1}{j} \sum_{p=1}^j
    V^{\frac{2}{s}}(Y_p) \un_{A_{q,j}^{(k-1)}} \right] \\
& \leq \sup_p \PE\left[
    W(Y_p) \un_{A_{q,p-1}^{(k-1)}} \right]
\end{align*}
which is finite under Lemma~\ref{lemme:unif:moments}. Similarly, we prove that $
\sum_{j \geq q} j^{-1} \| \theta_j-\theta_{j-1}\|_V V(X_j)
 \un_{A_{q,j}^{(k)}} < \infty$ w.p.1,
upon noting that $ \| \theta_j-\theta_{j-1}\|_V \leq j^{-1}(V(Y_{j}) +
\theta_{j-1}(V))$.
\end{proof}
\begin{lemma} \label{lemma:a6}
  Assume E\ref{EE1}, E\ref{EE3}, E\ref{EE:ring}\ref{ring:c}-\ref{ring:a},
  R\ref{hyp:proof:lln}(k), R\ref{hyp:proof:subset}(j) and $\PE[W_j(Y_0^{(j)})]<
  \infty$ for all $j \leq k$. For any $a \in (0,1)$,
\[
\sum_{j \geq 1} j^{-1/a} L_{\theta_j}^{2/a} \ P_{\theta_j} W(X_j) < \infty
\eqsp, \qquad \PP-\text{a.s.}
\]
\end{lemma}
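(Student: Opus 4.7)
My plan is to exploit the summability of $\sum_{j} j^{-1/a}$ (which holds since $1/a>1$) together with (i) an almost sure envelope on $L_{\theta_j}$ along trajectories that eventually stay in $\Theta_{m_\star}$, and (ii) uniform $L^1$-bounds on $P_{\theta_j} W(X_j)$ obtained from the drift inequality of Proposition~\ref{drift:small:aees} and the moment bounds of Lemma~\ref{lemme:unif:moments}.

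First, I would reduce to ``good'' events. Set $A_{q,\infty}^{(k)} \eqdef \bigcap_{\ell \leq k} \bigcap_{j \geq q} \{\theta_j^{(\ell)} \in \Theta_{m_\star}\}$. By R\ref{hyp:proof:subset}($j$) for $j \leq k$, $\PP\bigl(\bigcup_{q \geq 1} A_{q,\infty}^{(k)}\bigr)=1$, and the partial sum over $j<q$ is a.s.~finite as a finite sum of a.s.~finite terms. It is therefore enough to show, for each fixed $q \geq 1$, that $\sum_{j \geq q} j^{-1/a} L_{\theta_j}^{2/a} P_{\theta_j} W(X_j) \un_{A_{q,\infty}^{(k)}} < \infty$ almost surely; and since $A_{q,\infty}^{(k)} \subseteq A_{q,j}^{(k)}$ for every $j \geq q$, the indicator $\un_{A_{q,\infty}^{(k)}}$ can be replaced by $\un_{A_{q,j}^{(k)}}$ termwise.

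Next, I would trap $L_{\theta_j}$ by a random but almost surely finite constant. Lemma~\ref{lemme:unif:ergo}, (\ref{lemme:unif:ergo:tool1}), combined with the induction hypothesis R\ref{hyp:proof:lln}($k$) applied to $f = W$ (which yields $\theta_n(W) \to \theta_\star(W) < \infty$ a.s.), produces $\limsup_j L_{\theta_j} < \infty$ on $A_{q,\infty}^{(k)}$; together with the finiteness of $L_{\theta_j}$ for $\theta_j \in \Theta_{m_\star}$, this gives $M(\omega) \eqdef \sup_{j \geq q} L_{\theta_j}(\omega) < \infty$ almost surely on $A_{q,\infty}^{(k)}$. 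The tail is then dominated pointwise by
\begin{align*}
M(\omega)^{2/a} \sum_{j \geq q} j^{-1/a} P_{\theta_j} W(X_j) \un_{A_{q,j}^{(k)}} \eqsp.
\end{align*}
It remains to show that this last series is almost surely finite. Taking its expectation and applying Fubini, on $A_{q,j}^{(k)}$ one has $\theta_j \in \Theta_{m_\star}$ and Proposition~\ref{drift:small:aees} gives $P_{\theta_j} W(X_j) \leq \tilde{\lambda} W(X_j) + \tilde{b}\, m_\star\, \theta_j(W)$. Lemma~\ref{lemme:unif:moments} at level $k+1$ bounds $\sup_j \PE\bigl[W(X_j) \un_{A_{q,j}^{(k)}}\bigr]$, while writing $\theta_j(W) = j^{-1} \sum_{p=1}^{j} W(Y_p)$ and using $W = W_{k+1} \in \mathcal{L}_{W_k}$ together with Lemma~\ref{lemme:unif:moments} at level $k$ bounds $\sup_j \PE\bigl[\theta_j(W) \un_{A_{q,j}^{(k)}}\bigr]$. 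Summing against $\sum_{j} j^{-1/a}<\infty$ yields a finite expected series, whence almost sure convergence.

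The only delicate point is the pointwise control of $L_{\theta_j}$: from the proof of Lemma~\ref{lemme:unif:ergo}, $L_\theta$ scales polynomially in $\theta(W)$ and is therefore not uniformly bounded over the class $\Theta_{m_\star}$, so a deterministic envelope is unavailable. The almost sure convergence $\theta_n(W) \to \theta_\star(W)$ furnished by R\ref{hyp:proof:lln}($k$) is what supplies the random but almost surely finite envelope $M(\omega)$, after which the summability of $\sum_j j^{-1/a}$ closes the argument.
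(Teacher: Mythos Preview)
Your proposal is correct and follows essentially the same route as the paper: reduce to the events $A_{q,j}^{(k)}$ via R\ref{hyp:proof:subset}, control $\sup_j L_{\theta_j}$ almost surely on these events through Lemma~\ref{lemme:unif:ergo} and R\ref{hyp:proof:lln}($k$), bound $\sup_j \PE[P_{\theta_j}W(X_j)\un_{A_{q,j}^{(k)}}]$ via the drift inequality and Lemma~\ref{lemme:unif:moments}, and conclude by summability of $\sum_j j^{-1/a}$. Your write-up is simply more explicit than the paper's (which compresses the moment bound into ``as in the proof of Lemma~\ref{lemme:unif:moments}''), but the strategy is identical.
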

\begin{proof}
 By R\ref{hyp:proof:subset}(j) for all $j \leq k$, it is sufficient to prove
  that for any positive integer $q$
\[
\sum_{j \geq 1} j^{-1/a} L_{\theta_j}^{2/a} \ P_{\theta_j} W(X_j) \ 
\un_{A_{q,j}^{(k)}} < \infty \qquad \PP-\text{a.s.}
\] 
where $A_{q,j}^{(k)}$ is defined in Appendix~\ref{app:lemme:unif:moments}.  Let
$q \geq 1$.  By Lemma~\ref{lemme:unif:ergo}, $\sup_j L_{\theta_{j}}
\un_{A_{q,j}^{(k)}} < \infty$ w.p.1; and, as in the proof of
Lemma~\ref{lemme:unif:moments}, it can be proved that $\sup_j \PE\left[
  P_{\theta_j} W(X_j) \un_{A_{q,j}^{(k)} } \right] < \infty$.  The proof is
concluded since  $\sum_k k^{-1/a} < \infty$.
\end{proof}

%%%%%%%%%%%%%%%%%%%%%%%%%%%%%%%%%%%%%%%%%%%%%%%%%%%%%%%%%%%%%%%%%

\subsection{Proof of Proposition~\ref{prop:vitesse:quantiles}}
\label{secproof:prop:vitesse:quantiles}
 The proof
uses a Hoeffding inequality for (non-stationary) Markov chains.  The following
result is proved in
\cite[section 5.2, theorem 17]{douc:moulines:olsson:vanhandel:2011}.
\begin{proposition} \label{hoeffding}
  Let $(Y_k)_{k \in \mathbb{N}}$ be a Markov chain on $(\X, \mathcal{X})$, with
  transition kernel $Q$ and initial distribution $\eta$.  Assume $Q$ is
  $\overline{W}$-uniformly ergodic, and denote by $\T_{\star}$ its unique invariant
  distribution. Then there exists a constant $K$ such that for any $t>0$ and for any
  bounded function $f:\X \rightarrow \mathbb{R}$
\begin{displaymath}
 \PP\left( \sum_{i=1}^n f(Y_i) - n \T_{\star}(f) \geq t \right) \leq K \eta(\overline{W}) \exp \left[-\frac{1}{K} \left( \frac{t^2}{n |f|_{\infty}^2} \wedge \frac{t}{|f|_{\infty}} \right) \right] \eqsp.
\end{displaymath}
\end{proposition}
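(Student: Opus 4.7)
My plan is to prove this by the classical Poisson equation plus martingale decomposition approach, combined with a Bernstein-type bound for martingales whose increments are controlled by a Lyapunov function rather than a uniform constant.

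\textbf{Step 1 (Reduction and Poisson solution).} Without loss of generality assume $\theta_\star(f)=0$; otherwise replace $f$ by $f-\theta_\star(f)$, which inflates $|f|_\infty$ only by a factor $2$. Since $Q$ is $\overline{W}$-uniformly ergodic, there exist $\rho\in(0,1)$ and $C_0$ such that $\|Q^k(x,\cdot)-\theta_\star\|_{\overline{W}}\le C_0 \rho^k \overline{W}(x)$, so that $|Q^k f(x)|\le C_0 \rho^k |f|_\infty \overline{W}(x)$ for bounded $f$. I would then define $\hat f(x)\eqdef\sum_{k\ge 0} Q^k f(x)$; the series converges and satisfies $|\hat f|_{\overline{W}}\le C_1 |f|_\infty$ for some $C_1$ depending only on $(C_0,\rho)$, with the Poisson identity $\hat f-Q\hat f = f$.

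\textbf{Step 2 (Martingale decomposition).} Writing $f(Y_i)=\hat f(Y_i)-Q\hat f(Y_i)$ and telescoping with a $\pm Q\hat f(Y_{i-1})$ yields the Gordin-type decomposition
\begin{align*}
\sum_{i=1}^n f(Y_i) \;=\; M_n \;+\; Q\hat f(Y_0) - Q\hat f(Y_n), \qquad M_n \eqdef \sum_{i=1}^n\bigl(\hat f(Y_i)-Q\hat f(Y_{i-1})\bigr),
\end{align*}
where $M_n$ is a martingale for the natural filtration of $(Y_k)$. The boundary term is bounded in absolute value by $C_1|f|_\infty(\overline{W}(Y_0)+\overline{W}(Y_n))$, and will be absorbed into the prefactor using $\PE[\overline{W}(Y_n)]\le C_2\,\eta(\overline{W})$ (a standard consequence of $\overline{W}$-uniform ergodicity and the drift that comes with it).

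\textbf{Step 3 (Bernstein inequality for $M_n$).} For the martingale part I would apply a Chernoff argument: for $\lambda>0$ small enough, control $\PE[\exp(\lambda M_n)]$ by iterating conditional estimates $\PE[\exp(\lambda\Delta_i)\mid \mathcal F_{i-1}]\le \exp(c_1\lambda^2|f|_\infty^2 \overline{W}(Y_{i-1})^2)$ obtained from the inequality $e^z\le 1+z+\tfrac{z^2}{2}e^{|z|}$ together with the bound $|\Delta_i|\le 2C_1|f|_\infty \overline{W}(Y_i)\vee\overline{W}(Y_{i-1})$. Averaging the random variance $\sum_i \overline{W}(Y_{i-1})^2$ back against the invariant measure (using the ergodicity once more) returns a deterministic variance proxy of order $n|f|_\infty^2$. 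Optimizing $\lambda$ in the resulting bound and splitting cases according to whether $t/(n|f|_\infty)$ is small or large then produces the mixed Gaussian/exponential rate $\min(t^2/(n|f|_\infty^2),t/|f|_\infty)$ in the exponent; the $\eta(\overline W)$ factor appears naturally as the $L^1$-bound for the random variance and for the boundary term, via the starting-point dependence in the ergodic estimates.

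\textbf{Main obstacle.} The delicate point is Step 3: the martingale increments are not bounded by $|f|_\infty$ uniformly but only by $|f|_\infty \overline{W}(Y_i)$, so one cannot invoke Azuma directly. One must therefore obtain the exponential-moment bound via conditional sub-Gaussian/sub-exponential estimates that keep the dependence on $|f|_\infty$ linear while handling the random amplitude $\overline{W}(Y_{i-1})$—this is exactly the point where $\overline{W}$-uniform ergodicity (giving geometric return of $\PE[\overline{W}(Y_i)^2]$ toward $\theta_\star(\overline{W}^2)$) is crucial. Carrying this through cleanly is precisely the content of Theorem~17 in \cite{douc:moulines:olsson:vanhandel:2011}, which I would cite for the technical core and use as a black box once the setup above is in place.
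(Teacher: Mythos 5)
The paper offers no proof of this proposition: it is quoted directly from Douc, Moulines, Olsson and van Handel (2011), Section~5.2, Theorem~17, and that citation is the entire justification. Your proposal ends by invoking exactly that theorem ``as a black box'' for the ``technical core'', so in the end your argument and the paper's coincide --- both reduce to the citation. The catch is that the result you defer to is not an auxiliary lemma inside your scheme; it \emph{is} the statement to be proved, so the sketch that precedes it carries no independent weight and has to stand on its own.

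On its own, Step~3 has a genuine gap. $\overline{W}$-uniform ergodicity yields a drift inequality $Q\overline{W}\le\lambda\overline{W}+b$, i.e.\ control of the conditional \emph{first} moment of $\overline{W}(Y_i)$ given $Y_{i-1}$; it gives no conditional exponential (or even Gaussian) moments of $\overline{W}(Y_i)$. Since your bound on the Poisson solution is $|\hat f|\le C_1|f|_\infty\overline{W}$, the increment $\Delta_i$ contains $\hat f(Y_i)$, which may be conditionally heavy-tailed, and the claimed estimate $\PE[\exp(\lambda\Delta_i)\mid\mathcal{F}_{i-1}]\le\exp(c_1\lambda^2|f|_\infty^2\overline{W}(Y_{i-1})^2)$ does not follow from the hypotheses. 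The bound $|\hat f|\le C_1|f|_\infty\overline{W}$ is in fact too weak in principle: under it alone the increments scale like $\overline{W}$, of which $\theta_\star$ is only guaranteed a first moment, so no exponential inequality whose exponent depends only on $|f|_\infty$ could be extracted from that route. What makes statements of this type provable is either the sharper estimate $|\hat f(x)|\le C|f|_\infty\left(1+\log\overline{W}(x)\right)$, valid for bounded $f$ because one may also use the trivial bound $\|Q^k(x,\cdot)-\theta_\star\|_{\mathrm{TV}}\le 2$, or a regeneration (Nummelin splitting) argument that works with excursion lengths rather than with $\hat f$. Finally, even granting your conditional estimate, ``averaging the random variance $\sum_i\overline{W}(Y_{i-1})^2$ back against the invariant measure'' replaces a random quantity sitting inside an exponential by its expectation; that is not a valid operation --- you would need exponential moments of $\sum_i\overline{W}(Y_{i-1})^2$, not its mean, to arrive at a deterministic Bernstein exponent. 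So the sketch does not close as written, and the honest content of your proof is the same external citation the paper itself makes.
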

\begin{lemma} \label{lemme:appl:hoeffding}
  Assume that there exists $\overline{W}$ such that $\{Y_n, n \geq 0\}$ is a
  $\overline{W}$-uniformly ergodic Markov chain with initial distribution $\eta$
  with $\eta(\overline{W}) < \infty$.  Let $l \in \{1, \cdots, S-1 \}$ and $p_l \in
  (0,1)$; and set $\xi_l = F_{\theta_\star}^{-1}(p_l)$.  For all $\epsilon >0$
  and any $ n\geq 1$,
\begin{align*}
 \PP \left(|\bornea_{\T_n,l}-\xi_l|>\epsilon\right) \leq 2 K \eta(\overline{W}) \exp \left(- \frac{n}{K} \left(\delta_{\epsilon}^2 \wedge \delta_{\epsilon} \right) \right) \eqsp,
\end{align*}
where $\delta_{\epsilon} = min \left\{ F_{\T_{\star}}(\xi_l +\epsilon) -
  p_l,p_l-F_{\T_{\star}}(\xi_l-\epsilon) \right\}$.
\end{lemma}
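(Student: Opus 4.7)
The plan is to reduce the deviation event for the empirical quantile $\bornea_{\theta_n,l}$ to a deviation event for the empirical cdf $F_{\theta_n}$, and then apply the Hoeffding inequality of Proposition~\ref{hoeffding} to suitable indicator functions along the chain $\{Y_n, n \geq 0\}$.

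First I would recall that, by the definition of the empirical quantile, $\bornea_{\theta_n,l} = F_{\theta_n}^{-1}(p_l)$ where $F_{\theta_n}(x) = n^{-1} \sum_{i=1}^n \un_{\{\mesobs(Y_i) \leq x\}}$. From the right-continuity of $F_{\theta_n}$ and the definition of its generalized inverse, one has the equivalences
\begin{align*}
\{\bornea_{\theta_n,l} > \xi_l + \epsilon\} & = \{F_{\theta_n}(\xi_l + \epsilon) < p_l\}, \\
\{\bornea_{\theta_n,l} < \xi_l - \epsilon\} & \subseteq \{F_{\theta_n}(\xi_l - \epsilon) \geq p_l\}.
\end{align*}
Hence $\PP(|\bornea_{\theta_n,l} - \xi_l| > \epsilon)$ is bounded by the sum of the two probabilities on the right-hand side.

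Next I would introduce the bounded function $f_x(y) = \un_{\{\mesobs(y) \leq x\}}$, so that $|f_x|_\infty = 1$ and $\theta_\star(f_x) = F_{\theta_\star}(x)$. Writing each of the two events above as a sum-deviation event for $\{f_x(Y_i)\}$ (for $x = \xi_l \pm \epsilon$), one gets gaps of size $n \bigl(F_{\theta_\star}(\xi_l+\epsilon) - p_l \bigr)$ and $n \bigl(p_l - F_{\theta_\star}(\xi_l-\epsilon)\bigr)$ respectively, both bounded below by $n \delta_\epsilon$ by definition of $\delta_\epsilon$. Applying Proposition~\ref{hoeffding} (to $f_{\xi_l+\epsilon}$ and $-f_{\xi_l-\epsilon}$, which requires nothing beyond the $\overline{W}$-uniform ergodicity of $Q$ and $\eta(\overline{W}) < \infty$) with $t = n \delta_\epsilon$ gives, for each tail,
\[
\PP\!\bigl(\bornea_{\theta_n,l} - \xi_l > \epsilon\bigr) \vee \PP\!\bigl(\bornea_{\theta_n,l} - \xi_l < -\epsilon\bigr) \leq K \eta(\overline{W}) \exp\!\left(-\frac{n}{K} \bigl(\delta_\epsilon^2 \wedge \delta_\epsilon\bigr)\right).
\]
Summing the two bounds produces the factor $2$ and concludes the proof.

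There is no real obstacle here; the only mildly delicate point is keeping the inequalities straight when passing between quantile and cdf events (making sure the boundary case $\bornea_{\theta_n,l} = \xi_l - \epsilon$ is absorbed into the correct side), and verifying that the $\min$ of the two bounds $(t^2/n) \wedge t$ in Proposition~\ref{hoeffding} evaluates to $n (\delta_\epsilon^2 \wedge \delta_\epsilon)$ at $t = n\delta_\epsilon$, which is immediate.
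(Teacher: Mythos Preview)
Your proposal is correct and follows essentially the same route as the paper: split the quantile deviation into the two one-sided events, translate each into a deviation of the empirical cdf via the standard equivalence $F_{\theta_n}^{-1}(p_l)>a \Leftrightarrow F_{\theta_n}(a)<p_l$, and apply Proposition~\ref{hoeffding} to the bounded indicator (the paper uses the complementary indicator $\un_{\{\mesobs(\cdot)>\xi_l+\epsilon\}}$, which of course gives the same bound). One cosmetic slip: the signs in your parenthetical are swapped---the upper tail requires Hoeffding for $-f_{\xi_l+\epsilon}$ and the lower tail for $f_{\xi_l-\epsilon}$---but since you have already identified the correct gaps $n\bigl(F_{\theta_\star}(\xi_l+\epsilon)-p_l\bigr)$ and $n\bigl(p_l-F_{\theta_\star}(\xi_l-\epsilon)\bigr)$, this does not affect the argument.
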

\begin{proof}
  Let $\epsilon>0$. We write $  \PP \left(|\bornea_{\T_n,l}-\xi_l|>\epsilon \right)  \leq \PP
  \left(\bornea_{\T_n,l} \geq \xi_l + \epsilon \right) + \PP
  \left(\bornea_{\T_n,l} < \xi_l - \epsilon \right)$.
Since $F_{\T_{n}}(x) \leq t$ iff $x\leq F_{\T_{n}}^{-1}(t)$,
\begin{align*}
  \PP \left(\bornea_{\T_n,l} \geq \xi_l + \epsilon \right) &= \PP
  \left(F_{\theta_n}^{-1}(p_l) \geq \xi_l + \epsilon \right) = \PP \left( p_l \geq F_{\T_{n}}(\xi_l + \epsilon) \right)\\
 &
= \PP \left(\sum_{k=1}^n \un_{\{\mesobs(Y_k) > \xi_l + \epsilon \}}  \geq n(1-p_l)\right) \eqsp.
\end{align*}
Proposition~\ref{hoeffding} is then applied with $f(x) = \un_{\{\mesobs(x)>\xi_l +
  \epsilon\}}$. As
\begin{align*}
 \T_{\star}(f) = \int \un_{\{\mesobs(x)>\xi_l + \epsilon\}} \T_{\star}(dx) = 1 - F_{\T_{\star}}(\xi_l + \epsilon) \eqsp,
\end{align*}
we obtain
\begin{align*}
  \PP \left(\bornea_{\T_n,l} \geq \xi_l + \epsilon \right)
  &=\PP \left( \sum_{k=1}^n f(Y_k) - n \T_{\star}(f) \geq n \left( F_{\T_{\star}}(\xi_l + \epsilon) - p_l\right) \right)\\
  & \leq K \eta(\overline{W}) \exp \left( - \frac{n}{K} \left[ \left(
        F_{\T_{\star}}(\xi_l + \epsilon) - p_l \right)^2 \wedge \left(
        F_{\T_{\star}}(\xi_l + \epsilon) -p_l \right) \right]\right) \eqsp.
\end{align*}
for some constant $K$ independent of $n,l,\epsilon$.  Similarly,
\begin{align*}
  \PP \left(\bornea_{\T_n,l} < \xi_l - \epsilon \right) \leq K \eta(\overline{W}) \, \exp \left( - \frac{n}{K} \left[ \left( p_l
        -F_{\T_{\star}}(\xi_l - \epsilon) \right)^2 \wedge \left( p_l
        -F_{\T_{\star}}(\xi_l - \epsilon) \right)\right] \right) \eqsp,
\end{align*}
which concludes the proof.
\end{proof}

{\em Proof of Proposition~\ref{prop:vitesse:quantiles}} Let $f_{\T_{\star}} =
F'_{\T_{\star}}$ and $\epsilon_n$ be defined by
\begin{align*}
  \epsilon_n = \frac{2 \sqrt{2}}{f_{\T_{\star}}(\xi_l)} \sqrt{K} \sqrt{\frac{\log(n)}{n}} \eqsp,
\end{align*}
where $K$ is given by Lemma~\ref{lemme:appl:hoeffding}.  Note that under
(\ref{hyp:fd}), $f_{\T_{\star}}(\xi_l)>0$ since $p_l \in (0,1)$. By
(\ref{hyp:fd}), $F_{\T_{\star}}$ is differentiable and we write when
$n \to \infty$
 \begin{align*}
   F_{\T_{\star}}(\xi_l + \epsilon_n) - p_l = F_{\T_{\star}}(\xi_l +
   \epsilon_n) - F_{\T_{\star}}(\xi_l) = f_{\T_{\star}}(\xi_l) \epsilon_n +
   o(\epsilon_n) \eqsp.
\end{align*}
Hence $F_{\T_{\star}}(\xi_l + \epsilon_n) - p_l \geq \sqrt{2 K}
\sqrt{\frac{\log(n)}{n}}$ for $n$ large enough. Similarly,
$p_l-F_{\T_{\star}}(\xi_l - \epsilon_n)\geq \sqrt{2 K}
\sqrt{\frac{\log(n)}{n}}$ for $n$ large enough. So when $n$ is large enough, $
n K^{-1} \left( \delta_{\epsilon_n}^2 \wedge \delta_{\epsilon_n} \right) \geq 2
\log(n)$ with $\delta_\epsilon$ defined in Lemma~\ref{lemme:appl:hoeffding}.
By Lemma~\ref{lemme:appl:hoeffding}, for $n$ large enough, to
\begin{align*}
  \PP \left(|\bornea_{\T_n,l}-\xi_l|>\epsilon_n \right) \leq \frac{2 K \eta(\bar W)}{n^2} \eqsp.
\end{align*}
As $\sum_{n=1}^{\infty} \PP \left(|\bornea_{\T_n,l}-\xi_l|>\epsilon_n \right) <
\infty$, the Borel-Cantelli lemma yields $ \limsup_n \epsilon_n^{-1}
\left|\bornea_{\T_n,l} - \xi_l \right| < \infty$ w.p.1. This concludes the
proof.

%%%%%%%%%%%%%%%%%%%%%%%%%%%%%%%%%%%%%%%%%%%%%%%%%%%%%%%%%%%%%%%%%

\end{document}